\tikzset{
  invisible/.style={opacity=0},
  visible on/.style={alt={#1{}{invisible}}},
  alt/.code args={<#1>#2#3}{%
    \alt<#1>{\pgfkeysalso{#2}}{\pgfkeysalso{#3}} 
  },
}
\colorlet{shadecolor}{gray!12}
\newcommand{\NN}{{\mathbb N}}
\newcommand{\gr} {\mbox{Digraph}}
\newcommand{\g} {\mbox{digraph}}
\newcommand{\SPD} {\text{SPD}}
\newcommand{\MSP} {\text{MSP}}
\newenvironment{desctight}
  {\begin{list}{}{\setlength\labelwidth{0pt}
        \setlength{\itemsep}{0.5pt}
        \setlength{\parsep}{0pt}
        \setlength\itemindent{-\leftmargin}
        }}
    {\end{list}}
\newtheorem{theorem}{Theorem}[section]
\newtheorem{example}[theorem]{Example}
\newtheorem{corollary}[theorem]{Corollary}
\newtheorem{lemma}[theorem]{Lemma}
\newtheorem{observation}[theorem]{Observation}
\newtheorem{definition}[theorem]{Definition}
\newtheorem{remark}[theorem]{Remark}
\newtheorem{proposition}[theorem]{Proposition}
\newenvironment{proof}{\noindent{\bf Proof~}}{\null\hfill $\Box$\par\medskip}
\newcommand{\bigo}{\text{$\mathcal O$}}
\newcommand{\MSOA}{\text{MSO}_1}
\newcommand{\ideg}{\text{indegree}}
\newcommand{\odeg}{\text{outdegree}}
\newcommand{\un} {{\it un}}
\newcommand{\LMSOA}{\text{LinEMSO}_1}
\newcommand{\dcws} {\text{d-cw}}
\newcommand{\IN}{\mathbb{N}}
\newcommand{\fpt} {\mbox{FPT}}
\newcommand{\xp} {\mbox{XP}}
\newcommand{\w} {\mbox{W}}
\newcommand{\free} {\text{Free}}
\newcommand{\OC} {\text{OC}}
\newcommand{\SPO} {\text{SPO}}
\newcommand{\CN} {\text{CN}}
\newcommand{\OCN} {\text{OCN}}
\newcommand{\dmws} {\text{dmw}}
\newcommand{\arc} {\mbox{arc}}
\newcommand{\np} {\mbox{NP}}
\newcommand{\p} {\mbox{P}}
\begin{document}


\title{Efficient computation of the oriented chromatic number of recursively defined digraphs\thanks{An extended abstract of
this paper appeared in Proceedings of the {\em International Conference on Combinatorial Optimization and Applications} (COCOA 2020) \cite{GKL20}.}}

\author[1]{Frank Gurski}
\author[1]{Dominique Komander}
\author[1]{Marvin Lindemann}

\affil[1]{\small University of  D\"usseldorf,
Institute of Computer Science, Algorithmics for Hard Problems Group,
40225 D\"usseldorf, Germany}

\maketitle


\begin{abstract}
In this paper we consider colorings of oriented graphs, i.e.\ digraphs
without cycles of length 2. Given some  oriented graph  $G=(V,E)$,  an oriented $r$-coloring for $G$ is a partition 
of the vertex set $V$ into $r$ independent sets, such
that all the arcs between two of these sets have the same direction.
The oriented chromatic number of  $G$ is the  smallest 
integer $r$ such that $G$
permits an oriented $r$-coloring.
Deciding whether an acyclic digraph  has an oriented $4$-coloring
is NP-hard, which motivates to consider the problem
on special graph classes.

In this paper we consider the Oriented Chromatic Number problem on classes  of recursively defined 
oriented graphs.
Oriented co-graphs (short for  oriented complement reducible graphs) can 
be recursively defined defined from
the single vertex graph by applying the disjoint union and order composition.
This recursive structure allows to compute an optimal oriented coloring and the
oriented chromatic number in linear time. 
We generalize this result using the  concept of perfect orderable graphs. 
Therefore, we show that for acyclic transitive digraphs every greedy coloring along
a topological ordering leads to an optimal oriented coloring.
Msp-digraphs (short for minimal series-parallel digraphs) can be defined from
the single vertex graph by applying the  parallel composition and series composition.
We prove an upper bound of $7$ for the oriented chromatic
number for msp-digraphs and we give an example to show
that this is bound best possible. We apply this bound and 
the recursive  structure of
msp-digraphs to obtain a linear time solution for
computing the oriented chromatic number of msp-digraphs.

In order to generalize the results on computing the oriented chromatic number
of special graph classes, we consider the parameterized complexity of the Oriented Chromatic Number problem
by so-called structural parameters, which
are measuring the difficulty of decomposing a graph into a special tree-structure.

\bigskip
\noindent
{\bf Keywords:} oriented graphs;
msp-digraphs;
oriented co-graphs;  
oriented coloring; 
efficient algorithms; 
directed clique-width; 
parameterized algorithms

\end{abstract}


\section{Introduction}

Given some undirected graph
$G=(V,E)$, an {\em $r$-coloring} for $G$   is a partition of the vertex set $V$ into $r$ independent sets. 
The smallest integer $r$ such that a graph
$G$ permits an $r$-coloring is referred to as the
{\em chromatic number} of $G$.
Deciding whether a graph has a $3$-coloring is NP-complete. 
However, there are efficient solutions for the Chromatic Number 
problem on special graph classes, such as  co-graphs \cite{CLS81},
chordal graphs \cite{Gol80}, and comparability graphs \cite{Hoa94}.

For directed graphs the concept of acyclic colorings introduced by Neumann-Lara \cite{NL82}
received a lot of attention in \cite{NL82,Moh03,BFJKM04}
and also in recent works \cite{LM17,MSW19,SW20,GKR20c,GKR21}. 
Given some directed graph  $G=(V,E)$, an {\em acyclic $r$-coloring} for $G$ is a partition of the vertex set $V$ into 
$r$ acyclic sets.\footnote{A set $V'$ of vertices of a digraph $G$ is called {\em acyclic} if 
the subdigraph induced by $V'$ is acyclic.} The {\em dichromatic
number} of a directed graph $G$ is the smallest  integer $r$ such that $G$
permits an acyclic $r$-coloring.

In this paper we consider the principle of oriented colorings on oriented graphs, 
which has been introduced  by Courcelle \cite{Cou94}. 
Given some  oriented graph  $G=(V,E)$,  an {\em oriented $r$-coloring} for $G$ is a partition 
of the vertex set $V$ into $r$ independent sets, such
that all the arcs linking two of these subsets have the same direction.
The {\em oriented chromatic number} of an oriented graph $G$, denoted by $\chi_o(G)$, is the  smallest 
integer $r$ such that $G$
has an oriented $r$-coloring. Oriented colorings have applications in scheduling
models in which incompatibilities are oriented \cite{CD06}.

In the Oriented Chromatic Number problem ($\OCN$ for short) there is given an 
oriented graph $G$ and an integer $r$ and one has to
decide whether there is an  oriented $r$-coloring for $G$. 
If $r$ is constant, i.e.\ not part of the input, the corresponding problem
is denoted by $\OCN_{r}$. Even $\OCN_{4}$ is NP-complete~\cite{CD06}.

So far, the definition of oriented coloring is mostly applied to undirected graphs.
In this case, the maximum value $\chi_o(G')$ of all possible orientations $G'$ of an undirected graph
$G$ is considered.
For several special undirected graph classes the oriented chromatic number has been bounded.
Among these are outerplanar graphs \cite{Sop97}, planar graphs \cite{Mar13}, and
Halin graphs \cite{DS14}. See \cite{Sop16} for a survey.

Oriented colorings of special classes of oriented graphs seems to be nearly uninvestigated.
In this paper, we consider the Oriented Chromatic Number problem restricted to  acyclic transitive digraphs,
oriented co-graphs, msp-digraphs, and  digraphs of bounded directed clique-width.

Oriented complement reducible graphs, oriented co-graphs for short, have been 
studied  by  Lawler in \cite{Law76} and by Corneil et al.\ in \cite{CLS81} using the notation
of transitive series parallel (TSP) digraphs. Oriented co-graphs
can be defined from the single vertex graph by applying the disjoint union and the 
order composition. This recursive structure allows to compute an optimal oriented coloring and the
oriented chromatic number in linear time \cite{GKR19d}. 
We generalize this result using the  concept of perfect orderable graphs by showing that for
acyclic transitive digraphs every greedy coloring along
a topological ordering leads to an optimal oriented coloring.
In order to obtain an upper bound we show that for  acyclic transitive digraphs 
and thus also for  oriented co-graphs  the oriented chromatic
number is at most the maximum vertex degree plus one.

Minimal series-parallel digraphs, msp-digraphs for short, have been  analyzed in \cite{VTL82}.
By \cite[Section 11.1]{BG18} these digraphs can be used for modeling flow
diagrams and dependency charts and have applications within scheduling under constraints.
Msp-digraphs can be defined from
the single vertex graph by using the parallel composition and series composition.
We prove an upper bound of $7$ for the oriented chromatic
number for msp-digraphs and we give an example to verify
that this is bound best possible. We use this bound and 
the recursive  structure of
msp-digraphs to obtain a linear time solution for
computing the oriented chromatic number of msp-digraphs.
Further, we show an upper bound of $3$ for the chromatic number
of underlying undirected graphs of msp-digraphs.

We also consider the parameterized complexity of the Oriented Chromatic
Number problem  parameterized by so-called  structural parameters, which
are measuring the difficulty of decomposing a graph into a special tree-structure.
The existence of an $\fpt$-algorithm\footnote{FPT is the class of all parameterized problems which can be 
solved by algorithms that are exponential only in the size of a fixed parameter while 
polynomial in the size of the input size \cite{DF13}.} or an $\xp$-algorithm\footnote{XP is the class of all 
parameterized problems which can be solved by algorithms that are polynomial  if the 
parameter is considered as a constant \cite{DF13}.} w.r.t.\ some structural parameter
allows an efficient computation of the Oriented Chromatic
Number problem on graph classes of bounded parameter values.

As already mentioned in \cite{GHKLOR14}, the Oriented Chromatic
Number problem is not in $\xp$ when parameterized by directed 
tree-width, directed path-width, Kelly-width, or DAG-width, unless  $\p=\np$.
Better results can be achieved considering the parameter directed clique-width.
By extending our solution on msp-digraphs we can show an
algorithm for the  Oriented Chromatic
Number problem on digraphs on $n$ vertices given by a directed
clique-width $k$-expression with running time in  $\bigo(n\cdot k^2 \cdot  2^{r(r+k)})$.
This implies that the Oriented Chromatic Number problem 
is in $\fpt$ when parameterized by the directed clique-width and $r$, which 
was already known by defineability in monadic second order logic (MSO) \cite{GHKLOR14}.
Thus, for  every integer $r$ it holds that  $\OCN_{r}$
is in $\fpt$ when parameterized by directed clique-width and 
for every class of graphs of bounded  directed clique-width and every 
integer $r$ the $r$-Oriented Chromatic Number problem can be solved in polynomial time.

Ganian has shown  an FPT-algorithm for $\OCN$ w.r.t.\ the parameter
tree-width (of the underlying undirected graph) \cite{Gan09}.
Further, he has shown that $\OCN$ is DET-hard\footnote{DET is the class of decision
problems which are reducible in logarithmic space to
the problem of computing the determinant of an integer valued $n\times n$-matrix.} for classes of
oriented graphs such that the underlying undirected class has bounded rank-width.

Beside these, we consider the standard parameter, i.e.\ the threshold value given in the instance,
and the parameter ''number of vertices''.
In Table \ref{fpt-sum} we summarize the known results for $\OCN$ and $\OCN_r$ parameterized by  
parameters.

 \begin{table}[ht]
 {\small
\begin{center}
\begin{tabular}{l||ll|ll|}
                           & \multicolumn{2}{c|}{$\OCN$} & \multicolumn{2}{c|}{$\OCN_r$} \\
                           \hline
directed tree-width        & $\not\in\xp$   &  Corollary \ref{cor-xp-ro}&$\not\in\xp$  &   Corollary \ref{cor-xp-ro} \\
\hline
directed path-width        &  $\not\in\xp$  &  Corollary \ref{cor-xp-ro}& $\not\in\xp$ & Corollary \ref{cor-xp-ro}  \\
\hline
DAG-width                  & $\not\in\xp$   & Corollary \ref{cor-xp-ro} & $\not\in\xp$ &  Corollary \ref{cor-xp-ro} \\
\hline
Kelly-width                & $\not\in\xp$  &Corollary \ref{cor-xp-ro} & $\not\in\xp$ & Corollary \ref{cor-xp-ro}  \\
\hline
tree-width of $\un(G)$     &  \fpt &\cite{Gan09} & \fpt   & \cite{Gan09} \\
\hline
rank-width of $\un(G)$     &  DET-h     &\cite{Gan09} &  ? &  \\
\hline
directed modular-width     &     ? & & \fpt &  Corollary \ref{ocnk2}    \\
\hline
directed clique-width      &     ? & & \fpt & Corollary \ref{cor11ax}  \\
\hline
standard parameter  $r$    & $\not\in\xp$     & Corollary \ref{cor-xp-r} &  ///   &   \\
\hline
directed clique-width + $r$& \fpt      & Corollary \ref{o}  &       ///  &   \\
\hline
number of vertices $n$     &  \fpt     & Corollary \ref{xp-n} &  \fpt    & Corollary \ref{xp-n}  \\
\hline
\end{tabular}
\end{center}
\caption{Complexity of $\OCN$ and $\OCN_r$ parameterized by  parameters. 
The ''///'' entries indicate that by taking $r$ out of the instance the considered parameter
makes no sense and the ''?'' entries indicate that the parameterized complexity remain open.  We assume that $\p\neq \np$.}\label{fpt-sum}
}
\end{table}

\section{Preliminaries}\label{intro}

We use notations of Bang-Jensen and Gutin \cite{BG09} for graphs and digraphs.

\subsection{Graphs}

A {\em graph} is a pair  $G=(V,E)$, where $V$ is
a finite set of {\em vertices}
and $E \subseteq \{ \{u,v\} \mid u,v \in
V,~u \not= v\}$ is a finite set of {\em edges}.
For a vertex $v\in V$, the  set $N(v)=\{u\in V \mid \{v,u\}\in E\}$ 
is called the {\em set of all neighbors} of $v$ 
or {\em neighborhood} of $v$.

We will use the following indexed graphs.
\begin{itemize}
\item
By 
$P_n=(\{v_1,\ldots,v_n\},\{ \{v_1,v_2\},\ldots, \{v_{n-1},v_n\}\})$, $n \ge 2$,
we denote the  path on $n$ vertices.

\item
By  
$C_n=(\{v_1,\ldots,v_n\},\{\{v_1,v_2\},\ldots, \{v_{n-1},v_n\},\{v_1,v_n\}\})$, $n \ge 3$,
we denote the  cycle on $n$ vertices.

\item 
By $K_n=(\{v_1,\ldots,v_n\},\{ \{v_i,v_j\}\mid 1\leq i<j\leq n\})$, $n \ge 1$,  we denote the complete graph on $n$ vertices.

\item
By $K_{n,m}=(\{v_1,\ldots,v_n,w_1,\ldots,w_m\},\{ \{v_i,w_j\}\mid 1\leq i\leq n,1\leq  j\leq m\})$,
$n,m \ge 1$ we denote the complete bipartite graph with $n+m$ vertices.
\end{itemize}

\subsection{Digraphs}

A {\em directed graph} or {\em digraph} is a pair  $G=(V,E)$, where $V$ is
a finite set of {\em vertices} and
$E\subseteq \{(u,v) \mid u,v \in V,~u \not= v\}$ is a finite set of ordered pairs of distinct
vertices called {\em arcs} or {\em directed edges}.
For a vertex $v\in V$, the sets $N^+(v)=\{u\in V \mid (v,u)\in E\}$ and
$N^-(v)=\{u\in V \mid (u,v)\in E\}$ are called the {\em set of all successors}
and the {\em set of all  predecessors} of $v$. The set $N(v)=N^+(v) \cup N^-(v)$
is  the {\em set of all neighbors}.
The  {\em outdegree} of $v$, $\odeg(v)$ for short, is the number
of successors of $v$ and the  {\em indegree} of $v$, $\ideg(v)$ for short,
is the number of predecessors of $v$.
The {\em maximum (vertex) degree} is defined by $\Delta(G)=\max_{v\in V} (\odeg(v)+\ideg(v))$.

For some given digraph $G=(V,E)$, we define
its underlying undirected graph by ignoring the directions of the arcs, i.e.
$\un(G)=(V,\{\{u,v\} \mid (u,v)\in E, u,v\in V\})$.
For some (di)graph class $F$ we define
$\free(F)$ as the set of all (di)graphs $G$ such that no induced sub(di)graph
of $G$ is isomorphic to a member of  $F$.

A digraph $G'=(V',E')$ is a {\em subdigraph} of digraph $G=(V,E)$ if $V'\subseteq V$
and $E'\subseteq E$.  If every arc of $E$ with both end vertices in $V'$  is in
$E'$, we say that $G'$ is an {\em induced subdigraph} of $G$ and we
write $G'=G[V']$.

An {\em oriented graph} is a digraph with no loops and no opposite arcs.
We will use the following indexed oriented graphs.
\begin{itemize}
\item
By 
$\overrightarrow{P_n}=(\{v_1,\ldots,v_n\},\{ (v_1,v_2),\ldots, (v_{n-1},v_n)\})$, $n \ge 2$,
we denote the oriented path on $n$ vertices.
\item
By  
$\overrightarrow{C_n}=(\{v_1,\ldots,v_n\},\{(v_1,v_2),\ldots, (v_{n-1},v_n),(v_n,v_1)\})$, $n \ge 2$,
we denote the oriented cycle on $n$ vertices.

\item
By 
$\overrightarrow{K_{n,m}}=(\{v_1,\ldots,v_n,w_1,\ldots,w_m\},\{ (v_i,w_j)\mid 1\leq i\leq n,1\leq  j\leq m\})$,
$n,m \ge 1$ we denote an oriented complete bipartite digraph with $n+m$ vertices.
\end{itemize}
An {\em oriented forest (tree)} is an orientation of a forest (tree).
An {\em out-rooted-tree} ({\em in-rooted-tree}) is an orientation of a
tree with a distinguished root such that
all arcs are directed away from (directed to) the root.
A {\em directed acyclic graph (DAG for short)} is a digraph without any oriented cycle $\overrightarrow{C_n}$,
for $n\geq 2$, as subdigraph. A {\em tournament} is a digraph in which there is exactly one edge 
between every two distinct vertices.

A vertex $v$ is {\em reachable} from vertex $u$ in $G$, if $G$ contains
an oriented path $\overrightarrow{P_n}$ as a subdigraph having start vertex $u$ and
end vertex $v$.
A {\em topological ordering} of a directed graph is a linear ordering of its vertices
such that for every directed edge $(u,v)$, vertex $u$ is before vertex $v$ in the ordering.
A  digraph $G$ is bipartite if $\un(G)$ is bipartite and
a digraph $G$ is planar if $\un(G)$ is planar.

A digraph $G=(V,E)$ is {\em transitive} if for
every pair $(u,v)\in E$ and $(v,w)\in E$ of arcs
with $u\neq w$ the arc $(u,w)$ also belongs to $E$.
The {\em transitive closure} $tc(G)$ of a digraph $G$ has the same
vertex set as $G$ and for two distinct vertices $u,v$ there is
an arc $(u,v)$ in $tc(G)$ if and only if
$v$ is reachable from  $u$ in $G$.

\subsection{Coloring undirected graphs}\label{co-und}

\begin{definition}[Graph coloring]\label{def-oricol}
An \emph{$r$-coloring} of a graph $G=(V,E)$  is a mapping $c:V\to \{1,\ldots,r\}$
such that:
\begin{itemize}
	\item $c(u)\neq c(v)$ for every $\{u,v\}\in E$.\footnote{The single condition on the mapping 
	to be a feasible coloring will be extended for oriented colorings in Definition \ref{def-oc}.}
\end{itemize}
The  {\em chromatic number} of $G$, denoted by $\chi(G)$, is the smallest integer $r$
such that $G$ has a $r$-coloring.
\end{definition}

We consider the following decision problem.

\begin{desctight}
\item[Name]     Chromatic Number (CN)

\item[Instance] A graph $G=(V,E)$ and a positive integer $r \leq |V|$.

\item[Question] Is there a $r$-coloring for $G$?
\end{desctight}

If $r$ is a constant, i.e.\ not part of the input, the corresponding problem
is denoted by $r$-Chromatic Number ($\CN_{r}$).
Even on 4-regular planar graphs $\CN_{3}$ is NP-complete  \cite{Dai80}.

It is well known that bipartite graphs are exactly the
graphs which allow a 2-coloring and that planar graphs are graphs that allow a 4-coloring.
On undirected co-graphs, the Chromatic Number problem is easy to solve
by the following result proven by Corneil et al.:

\begin{lemma}[\cite{CLS81}]\label{colo-und}
Let $G_1$ and $G_2$ be two vertex-disjoint graphs.
\begin{enumerate}
\item $\chi_o((\{v\},\emptyset))=1$
\item $\chi(G_1\cup G_2) = \max(\chi(G_1),\chi(G_2))$

\item $\chi(G_1\times G_2)= \chi(G_1) +\chi(G_2)$
\end{enumerate}
\end{lemma}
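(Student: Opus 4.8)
The plan is to treat the three claims separately by direct combinatorial arguments, exploiting the fact that the two composition operations behave oppositely with respect to the edges they place between $G_1$ and $G_2$: the disjoint union $\cup$ adds none of them, while the join $\times$ adds all of them. The first claim is immediate, since the single-vertex graph $(\{v\},\emptyset)$ has no edges, so the constant map $c(v)=1$ is a proper $1$-coloring and at least one color is always required; hence $\chi((\{v\},\emptyset))=1$.

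For the second claim I would prove both inequalities. Since $G_1$ and $G_2$ are induced subgraphs of $G_1\cup G_2$, every proper coloring of the union restricts to a proper coloring of each factor, giving $\chi(G_1\cup G_2)\ge\max(\chi(G_1),\chi(G_2))$. Conversely, I would fix optimal colorings of $G_1$ and $G_2$ using the palettes $\{1,\dots,\chi(G_1)\}$ and $\{1,\dots,\chi(G_2)\}$ and combine them into a single coloring of $G_1\cup G_2$. As there is no edge joining a vertex of $G_1$ to a vertex of $G_2$, no monochromatic edge is created across the parts, so this coloring is proper and uses $\max(\chi(G_1),\chi(G_2))$ colors, which yields the claimed equality.

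For the third claim the decisive observation is that in $G_1\times G_2$ every vertex of $G_1$ is adjacent to every vertex of $G_2$, so no color class may contain a vertex from each factor. Hence any proper coloring of $G_1\times G_2$ splits its color classes into those contained in $V(G_1)$ and those contained in $V(G_2)$; restricting yields proper colorings of the two factors on disjoint palettes, so at least $\chi(G_1)+\chi(G_2)$ colors are required. Conversely, combining optimal colorings of $G_1$ and $G_2$ on disjoint color sets gives a proper coloring of $G_1\times G_2$ with exactly $\chi(G_1)+\chi(G_2)$ colors, since the edges within each factor are properly colored by construction and the edges across the factors join vertices of different palettes. I expect no genuine obstacle in any of the three parts, the only step needing a moment's care being this lower bound, namely the argument that complete adjacency between the factors forces the two palettes to be disjoint.
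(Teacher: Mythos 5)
Your proof is correct and is the standard argument for this classical fact; the paper itself gives no proof, citing \cite{CLS81}, so there is nothing to diverge from. All three steps are sound, including the one delicate point you flag: since $G_1\times G_2$ contains every edge between the factors, no color class can meet both $V(G_1)$ and $V(G_2)$, so the restricted colorings use disjoint palettes and the lower bound $\chi(G_1)+\chi(G_2)$ follows. (One cosmetic note: item 1 of the lemma as printed says $\chi_o$, which is evidently a typo for $\chi$, and you correctly treated it as such.)
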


\begin{proposition} Let $G$ be a co-graph. Then,
$\chi(G)$  can be computed in linear time.
\end{proposition}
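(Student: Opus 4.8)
The plan is to proceed by induction on the recursive structure of the co-graph, using the cotree (the parse tree witnessing that $G$ is built from single vertices via disjoint union $\cup$ and join $\times$) together with the recurrences in Lemma~\ref{colo-und}. First I would recall that every co-graph admits a cotree whose leaves are the vertices of $G$ and whose internal nodes are labelled either $\cup$ or $\times$, and that this cotree can be constructed in linear time by the algorithm of Corneil et al.~\cite{CLS81}. Given such a cotree, the idea is to compute $\chi(G)$ by a single bottom-up pass: assign value $1$ to each leaf, and at each internal node combine the values already computed at its children according to the parts of Lemma~\ref{colo-und}, namely taking the maximum at a $\cup$-node and the sum at a $\times$-node.

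The correctness of this bottom-up evaluation follows directly from Lemma~\ref{colo-und}: part~(1) handles the leaves, part~(2) handles $\cup$-nodes, and part~(3) handles $\times$-nodes, so by structural induction the value computed at the root equals $\chi(G)$. The only subtlety is that the recurrences in Lemma~\ref{colo-und} are stated for binary operations on two graphs, whereas a cotree node may in general have more than two children; I would handle this either by using a binary cotree, or by observing that the recurrences extend immediately to the $k$-ary case, since $\max$ and $+$ are associative, so the value at a node with children of values $a_1,\ldots,a_k$ is $\max_i a_i$ (for $\cup$) or $\sum_i a_i$ (for $\times$).

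For the running time I would argue that the cotree has $\bigo(n)$ nodes in total (at most $n$ leaves and hence $\bigo(n)$ internal nodes, since every internal node has at least two children), and that at each node the combining step costs time proportional to its number of children. Summing the number of children over all internal nodes equals the number of edges of the cotree, which is $\bigo(n)$, so the entire bottom-up pass runs in linear time. Adding the linear-time cost of building the cotree gives an overall linear-time algorithm.

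The main obstacle I anticipate is not in the evaluation itself, which is a routine bottom-up tree computation, but in justifying the linear-time construction of the cotree; however, since this is a classical result of Corneil et al.~\cite{CLS81} that the excerpt already invokes, I would simply cite it rather than reproving it, so the proof reduces to the straightforward structural induction and the counting argument above.
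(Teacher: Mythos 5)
Your proposal is correct and matches the paper's intended argument: the paper states this proposition as an immediate consequence of Lemma~\ref{colo-und} together with the linear-time co-tree construction of Corneil et al., which is exactly the bottom-up evaluation you describe. Your additional remarks on handling non-binary co-tree nodes via associativity of $\max$ and $+$ and the edge-counting argument for the running time are routine but sound refinements of the same approach.
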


Coloring a graph $G=(V,E)$ can be done by a greedy algorithm.
For some given ordering $\pi$ of $V$, the vertices are ordered as a sequence in which each vertex is assigned to the minimum possible value that is not forbidden by the colors of its neighbors, see Algorithm \ref{algo-bip}.
Obviously, different orders can lead to
different numbers of colors. But there is always an ordering yielding to the minimum
number of colors,  which is hard to find in general.

\begin{algorithm}[ht]
\KwData{A graph $G=(\{v_1,\ldots,v_n\},E)$ and an ordering $\pi: v_1 < \hdots < v_n$ of its vertices.}
\KwResult{An admitted vertex coloring $c:\{v_1,\ldots,v_n\} \mapsto \NN $ of $G$.}
\For{($i=1$ to $n$)}{
$c(v_i)=\infty$
}
$c(v_1)=1$;\\
\For{($i=2$ to $n$)}{
$c(v_i)=\min\{\NN \setminus  \{c(v) \mid v\in N(v_i)\}\}$ \tcc*[r]{$\NN$ denotes the set of all positive integers.}
}
\caption{{\sc Greedy Coloring}}
\label{algo-bip}
\end{algorithm}

%


The class of perfectly orderable graphs consists of those graphs for which the 
given greedy algorithm leads to an ordering  yielding to an optimal coloring, 
not only for the graph itself but also for all of its induced subgraphs.

\begin{definition}[Perfectly orderable graph \cite{Chv84}]\label{def-peog}
Let $G=(V,E)$ be a graph. A linear ordering on $V$ is {\em perfect} if a greedy
coloring algorithm with that ordering optimally colors every induced subgraph of $G$.
A graph $G$ is {\em perfectly orderable} if it admits a perfect order.
\end{definition}

\begin{theorem}[\cite{Chv84}]\label{th-pe}
A linear ordering $\pi$ of a graph $G$ is perfect if and only if
there is no induced $$P_4=(\{a,b,c,d\},\{\{a,b\},\{b,c\},\{c,d\}\})$$
in $G$ such that $\pi(a)<\pi(b)$, $\pi(b)<\pi(c)$, and $\pi(d)<\pi(c)$.
\end{theorem}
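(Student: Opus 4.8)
The plan is to prove both implications of Theorem~\ref{th-pe}, where I will call an induced $P_4=(\{a,b,c,d\},\{\{a,b\},\{b,c\},\{c,d\}\})$ with $\pi(a)<\pi(b)$, $\pi(b)<\pi(c)$ and $\pi(d)<\pi(c)$ an \emph{obstruction}. Equivalently, orienting every edge from its $\pi$-smaller to its $\pi$-larger endpoint, an obstruction is an induced $P_4$ oriented as $a\to b\to c\leftarrow d$; note that $c$ is then the $\pi$-maximum of the four vertices and $a<b$. A useful preliminary observation is that obstruction-freeness is hereditary: since an obstruction lives on an induced subgraph, deleting vertices cannot create one, so if $\pi$ has no obstruction on $G$ then its restriction to any induced subgraph also has none. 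This lets me handle the two directions of Definition~\ref{def-peog} (perfection on \emph{all} induced subgraphs) by arguing only about colouring $G$ itself.

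For the necessity direction I would argue the contrapositive: if an obstruction on $\{a,b,c,d\}$ exists, then Algorithm~\ref{algo-bip} is not optimal on the induced subgraph $G[\{a,b,c,d\}]$, which is an induced $P_4$ and hence has $\chi=2$. The key computation is that, under the inherited order, $c$ is last among the four and both of its path-neighbours $b$ and $d$ precede it, while $a<b$ and $a$ is not adjacent to $c$; a direct check over the few admissible orders ($c$ last, $a<b$) shows the greedy rule assigns colour $1$ to $a$ (and to $d$), colour $2$ to $b$, and is then forced to give $c$ colour $3$. Thus greedy uses $3$ colours on a $2$-chromatic induced subgraph, so $\pi$ is not perfect.

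For the sufficiency direction, assume $\pi$ is obstruction-free; by the hereditary remark it suffices to show that greedy colours $G$ itself with $\chi(G)$ colours. Let $k$ be the number of colours greedy uses; since greedy always uses at least $\chi(G)\ge\omega(G)$ colours, it is enough to exhibit a clique of size $k$, for then $k=\chi(G)=\omega(G)$ and greedy is optimal. The plan is to prove the stronger statement, by induction on the $\pi$-position of a vertex $v$, that $v$ lies in a clique $\{q_1,\dots,q_{c(v)}=v\}$ with strictly increasing $\pi$-positions and colours $c(q_i)=i$; applied to a vertex of colour $k$ this yields the desired $k$-clique. In the inductive step, since $c(v)=j\ge 2$, vertex $v$ has an earlier neighbour $u$ with $c(u)=j-1$; I take its clique $Q_u$ from the induction hypothesis and try to extend it by $v$.

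The hard part will be exactly this extension. If $v$ fails to be adjacent to some $q\in Q_u$, I would pick such a non-neighbour $q$ of largest colour $i$, so that $v$ is adjacent to the colour-$(i{+}1)$ member $r$ of $Q_u$, giving an induced path $q\to r\to v$ with $q\not\sim v$; then, using that greedy forces all colours below $j$ to occur among $v$'s earlier neighbours, I would adjoin a colour-$i$ earlier neighbour $s$ of $v$. Since $c(s)=c(q)=i$ they are non-adjacent, and $s\sim v$, $\pi(s)<\pi(v)$, so in the good sub-case the four vertices induce exactly the path $q-r-v-s$ realising the forbidden order $\pi(q)<\pi(r)<\pi(v)$ and $\pi(s)<\pi(v)$, i.e.\ an obstruction, contradicting the hypothesis. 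The delicate technical point I expect to be the main obstacle is ruling out or re-routing the adjacency subcase $s\sim r$ (where the four vertices do not induce a $P_4$): this requires choosing the auxiliary colour-$i$ neighbour carefully and a short case analysis on its adjacencies to the clique, so that some quadruple always induces a $P_4$ carrying the obstruction order. Once the extension always succeeds, the induction produces the clique of size $k$ and the theorem follows.
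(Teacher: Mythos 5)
The paper itself gives no proof of this theorem --- it is imported verbatim from Chv\'atal \cite{Chv84} --- so your attempt can only be judged on its own merits. Your necessity direction is correct and complete: in each of the three inherited orders compatible with $\pi(a)<\pi(b)<\pi(c)$, $\pi(d)<\pi(c)$ (namely $d<a<b<c$, $a<d<b<c$, $a<b<d<c$), greedy colours $a$ and $d$ with $1$, $b$ with $2$, and is forced to give $c$ colour $3$ on the $2$-chromatic induced $P_4$, so $\pi$ is not perfect. Your reduction of sufficiency to exhibiting a clique of size $k$ (the number of greedy colours), and the hereditary remark that obstruction-freeness passes to induced subgraphs, are also sound.

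The gap is in the sufficiency half, and it is fatal to your plan as formulated: the strengthened inductive invariant --- every vertex $v$ lies in a clique $q_1<\dots<q_{c(v)}=v$ with $c(q_i)=i$ \emph{and strictly increasing $\pi$-positions} --- is false, so no ``careful choice of the auxiliary colour-$i$ neighbour'' can rescue the induction. Concretely, take the paw: vertices $w,u,s,v$ in this $\pi$-order with edges $\{w,u\},\{u,s\},\{u,v\},\{s,v\}$. Greedy gives $c(w)=1$, $c(u)=2$, $c(s)=1$, $c(v)=3$; the graph contains no induced $P_4$ at all, so $\pi$ is vacuously obstruction-free (and is in fact perfect, since greedy attains $\omega=3$). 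But the only triangle through $v$ is $\{u,s,v\}$, whose $\pi$-increasing order $(u,s,v)$ carries colours $(2,1,3)$: the colour-$1$ vertex comes \emph{after} the colour-$2$ vertex, so no witness of your form exists for $v$. In your notation this is exactly the subcase $s\sim r$ (here $s\sim u$), with $s$ the unique earlier colour-$1$ neighbour of $v$ and $q=w$ the unique non-neighbour, so there is nothing to re-choose. Dropping the monotonicity requirement (asking only that the $q_i$ precede $v$) makes the invariant plausibly true, but then you no longer know $\pi(q)<\pi(r)$, which your obstruction $q\mbox{-}r\mbox{-}v\mbox{-}s$ needs ($a<b$ in the theorem's pattern), and the contradiction evaporates; bridging precisely this is where Chv\'atal's actual proof does its nontrivial work, constructing the clique downward from colour $k$ with an extremal choice and a more elaborate $P_4$-hunt. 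So the step you flagged as ``delicate'' is not a technicality but a counterexample to your inductive statement, and the sufficiency direction remains unproven.
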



\begin{example}\label{ex-pe}
Every co-graph is perfectly orderable, since it does not have any
induced $P_4$.
\end{example}

The Chromatic Number problem can be solved by
an $\fpt$-algorithm w.r.t.\ the  tree-width of the 
input graph \cite{Gur08c}.
In contrast, this is not true for clique-width, since
it has been shown  in \cite{FGLS10a}, that the Chromatic Number problem
is $\w[1]$-hard w.r.t.\ the  clique-width of the input graph.
That is, under reasonable assumptions an $\xp$-algorithm is the best one can hope for.
Such algorithms are known, see \cite{EGW01a,KR01}.

In order to show  fixed parameter tractability for  $r$-Chromatic Number w.r.t.\ the
parameter clique-width
one can use its defineability within monadic second order logic \cite{CMR00}.

\subsection{Coloring oriented graphs}

Oriented graph coloring has been introduced by Courcelle \cite{Cou94} in 1994.
We consider oriented graph coloring on oriented graphs,
i.e. digraphs with no loops and no opposite arcs.

\begin{definition}[Oriented graph coloring]\label{def-oc}
Let $G=(V,E)$ be an oriented graph. 
An \emph{oriented $r$-coloring} of $G$ is a mapping $c:V\to \{1,\ldots,r\}$
such that:
\begin{itemize}
	\item $c(u)\neq c(v)$ for every $(u,v)\in E$,
	\item $c(u)\neq c(y)$ for every two arcs $(u,v)\in E$ and $(x,y)\in E$ with $c(v)=c(x)$.
\end{itemize}
The {\em oriented chromatic number} of $G$, denoted by $\chi_o(G)$, is the smallest integer $r$
such that $G$ has an oriented $r$-coloring.
The vertex sets $V_i=\{v\in V\mid c(v)=i\}$, with $1\leq i\leq r$, divide
$V$ into a partition of so called {\em color classes}.
\end{definition}
f
For two oriented graphs $G_1=(V_1,E_1)$ and $G_2=(V_2,E_2)$ a {\em homomorphism}
from $G_1$ to $G_2$, $G_1 \to G_2$ for short,
is a mapping $h: V_1 \to V_2$ such that  $(u,v) \in E_1$ implies
$(h(u),h(v)) \in E_2$.
A homomorphism from $G_1$ to $G_2$ can be regarded as an oriented coloring of $G_1$
that uses the vertices of $G_2$ as colors classes. 
Therefore, digraph $G_2$ is called the {\em color graph} of $G_1$.
This leads to equivalent definitions for the
oriented coloring and the oriented chromatic number.
There is an oriented $r$-coloring of an oriented graph $G_1$
if and only if there is a homomorphism from $G_1$ to some oriented graph $G_2$ with $r$ vertices.
Thus, the oriented chromatic number of $G_1$ is the minimum number of vertices in an
oriented graph $G_2$ such that there is a homomorphism from $G_1$ to $G_2$.
Obviously, it is possible to choose $G_2$ as a tournament.

\begin{observation}\label{oc-tournament}
There is an oriented $r$-coloring of an oriented graph $G_1$
if and only if there is a homomorphism from $G_1$ to some tournament $G_2$ with $r$ vertices.
Further, the oriented chromatic number of $G_1$ is the minimum number of vertices in a
tournament $G_2$ such that there is a homomorphism from $G_1$ to $G_2$.
\end{observation}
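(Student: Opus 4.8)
The plan is to reduce the statement to the equivalence already established in the paragraph preceding the observation, namely that $G_1$ admits an oriented $r$-coloring if and only if there is a homomorphism $G_1 \to G_2$ into some oriented graph $G_2$ on $r$ vertices, with $\chi_o(G_1)$ equal to the minimum such $r$. The only additional content is that the color graph $G_2$ may always be taken to be a tournament, and the key tool is the monotonicity of homomorphisms under adding arcs to the target.

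First I would prove the equivalence of the two homomorphism conditions. For the nontrivial direction, suppose $h \colon G_1 \to G_2$ is a homomorphism, where $G_2=(V_2,E_2)$ is an oriented graph on $r$ vertices. Since $G_2$ has no opposite arcs, every unordered pair of its vertices carries at most one arc. I would construct a tournament $G_2'$ on the same vertex set $V_2$ by keeping all arcs of $E_2$ and, for each pair $\{u,v\}$ with neither $(u,v)$ nor $(v,u)$ in $E_2$, inserting exactly one arc in an arbitrarily chosen direction. By construction $G_2'$ has exactly one arc between every two distinct vertices, so it is a tournament (in particular an oriented graph), and $E_2 \subseteq E(G_2')$. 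Because $(x,y)\in E_1$ implies $(h(x),h(y))\in E_2 \subseteq E(G_2')$, the same map $h$ is a homomorphism $G_1 \to G_2'$. The converse direction is immediate, since a tournament is an oriented graph, so any homomorphism into a tournament on $r$ vertices is already a homomorphism into an oriented graph on $r$ vertices.

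For the statement about the oriented chromatic number, I would combine the two bounds. Let $m$ denote the minimum number of vertices of a tournament admitting a homomorphism from $G_1$. Every tournament is an oriented graph, so by the already established characterization $m \ge \chi_o(G_1)$. Conversely, taking an oriented graph $G_2$ on $\chi_o(G_1)$ vertices with $G_1 \to G_2$ and completing it to a tournament $G_2'$ as above yields a tournament on $\chi_o(G_1)$ vertices with $G_1 \to G_2'$, whence $m \le \chi_o(G_1)$. Therefore $m = \chi_o(G_1)$.

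The argument is essentially routine; the only point requiring care is that the completion of $G_2$ to a tournament must not create opposite arcs or loops, which is guaranteed by inserting a new arc only for pairs that are non-adjacent in the oriented graph $G_2$. Thus the main (and only) obstacle is the bookkeeping of verifying that $G_2'$ remains a legitimate oriented graph while becoming a tournament, after which homomorphism-monotonicity does the rest.
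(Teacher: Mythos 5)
Your proposal is correct and matches the paper's intent exactly: the paper treats this observation as immediate from the preceding paragraph (``Obviously, it is possible to choose $G_2$ as a tournament''), and your argument---completing the oriented color graph $G_2$ to a tournament by arbitrarily orienting each non-adjacent pair, then noting that adding arcs to the target preserves the homomorphism---is precisely the routine verification being alluded to. You simply spell out the details the paper leaves implicit.
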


\begin{observation}\label{obs-low}
Let $G$ be an  oriented graph. Then, it holds 
that
$\chi(\un(G))\leq \chi_o(G)$.
\end{observation}

On the other hand it is not possible to bound the oriented chromatic number 
of an oriented graph $G$ by a function of the (undirected)
chromatic number of $\un(G)$. This has been shown in \cite[Section 3]{Sop16} 
by an orientation $K'_{n,n}$ of a $K_{n,n}$ satisfying $\chi_o(K'_{n,n})=2n$ and 
$\chi(\un(K'_{n,n}))=2$.

\begin{lemma}\label{le-col-subdigraph}
Let $G$ be an oriented graph and $H$ be a subdigraph
of $G$. Then, an oriented $r$-coloring of $G$ leads also an oriented
$r$-coloring for $H$.
\end{lemma}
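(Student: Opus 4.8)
The plan is to show that the defining conditions of an oriented coloring are inherited when passing from $G$ to any subdigraph $H$. Let $c:V(G)\to\{1,\ldots,r\}$ be an oriented $r$-coloring of $G$, and let $H=(V',E')$ be a subdigraph of $G$, so that $V'\subseteq V(G)$ and $E'\subseteq E(G)$. First I would define the candidate coloring for $H$ to be the restriction $c'=c|_{V'}$, which clearly maps $V'$ into $\{1,\ldots,r\}$, so it uses at most $r$ colors.

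Next I would verify the two conditions of Definition~\ref{def-oc} for $c'$. For the first condition, take any arc $(u,v)\in E'$; since $E'\subseteq E(G)$, this arc is also an arc of $G$, so the first coloring condition for $c$ gives $c(u)\neq c(v)$, hence $c'(u)\neq c'(v)$. For the second condition, take any two arcs $(u,v)\in E'$ and $(x,y)\in E'$ with $c'(v)=c'(x)$; again both arcs lie in $E(G)$ and $c(v)=c(x)$, so the second condition for $c$ applied in $G$ yields $c(u)\neq c(y)$, i.e.\ $c'(u)\neq c'(y)$. Thus $c'$ is an oriented $r$-coloring of $H$.

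The argument is essentially a one-line observation: because both the vertex set and the arc set of a subdigraph are subsets of those of $G$, every instance of a violation in $H$ would already be a violation in $G$, and the coloring conditions are universally quantified over arcs. There is no real obstacle here; the only point worth stating explicitly is that $H$ being a \emph{subdigraph} (not necessarily induced) still has $E'\subseteq E(G)$, which is exactly what the verification of both conditions uses. I would keep the proof to these few sentences and conclude that the restriction of any oriented $r$-coloring of $G$ is an oriented $r$-coloring of $H$.
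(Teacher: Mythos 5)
Your proof is correct and is exactly the argument the paper has in mind: the lemma is stated without proof there because the restriction $c|_{V(H)}$ trivially inherits both conditions of Definition~\ref{def-oc}, since each is universally quantified over arcs and $E(H)\subseteq E(G)$. Your explicit remark that only $E'\subseteq E(G)$ (not inducedness) is needed is accurate and matches the paper's later use of the lemma for non-induced subdigraphs such as the $M_i$ in Theorem~\ref{algspd}.
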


\begin{corollary}\label{le-isubdigraph}
Let $G$ be an oriented graph and $H$ be a subdigraph
of $G$. Then, it holds that $\chi_o(H)\leq \chi_o(G)$.
\end{corollary}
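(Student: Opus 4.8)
The final statement to prove is Corollary~\ref{le-isubdigraph}, which asserts that if $H$ is a subdigraph of an oriented graph $G$, then $\chi_o(H)\leq \chi_o(G)$.

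\medskip

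The plan is to derive this immediately from Lemma~\ref{le-col-subdigraph}, which has just been stated and which does the real work: any oriented $r$-coloring of $G$ restricts to an oriented $r$-coloring of $H$. First I would set $r=\chi_o(G)$, so that by definition of the oriented chromatic number $G$ admits an oriented $r$-coloring $c\colon V(G)\to\{1,\ldots,r\}$. Next I would invoke Lemma~\ref{le-col-subdigraph} directly: since $H$ is a subdigraph of $G$, this $r$-coloring $c$ yields an oriented $r$-coloring of $H$ (concretely, the restriction of $c$ to $V(H)$). Finally, because $H$ now possesses an oriented $r$-coloring, its oriented chromatic number is at most $r$ by the minimality in the definition of $\chi_o$; that is, $\chi_o(H)\leq r=\chi_o(G)$, which is exactly the claim.

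\medskip

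There is essentially no obstacle here: the corollary is a one-line consequence of the preceding lemma together with the definition of $\chi_o$ as a minimum. If one wanted to be fully self-contained rather than citing Lemma~\ref{le-col-subdigraph}, the only point requiring a word of care is verifying that the restriction of $c$ to $V(H)$ still satisfies both conditions of Definition~\ref{def-oc}. The first condition ($c(u)\neq c(v)$ for every arc $(u,v)$ of $H$) is inherited because every arc of $H$ is an arc of $G$. The second condition (no two arcs $(u,v),(x,y)$ with $c(v)=c(x)$ have $c(u)=c(y)$) likewise holds for $H$ because any pair of arcs of $H$ is also a pair of arcs of $G$, where the condition is already known to hold; restricting to a subset of arcs can only remove potential violations, never create them. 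Thus the restricted coloring is a valid oriented $r$-coloring of $H$, and the inequality follows.
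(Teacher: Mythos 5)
Your proof is correct and follows exactly the route the paper intends: the corollary is stated immediately after Lemma~\ref{le-col-subdigraph} precisely so that it follows by applying the lemma to an optimal oriented $\chi_o(G)$-coloring of $G$ and invoking the minimality in the definition of $\chi_o$. Your additional verification that the restriction satisfies both conditions of Definition~\ref{def-oc} is a correct (and welcome) filling-in of the lemma itself, which the paper leaves unproved.
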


\begin{example}\label{ex-color}
For oriented paths and oriented cycles we know:
$\chi_o(\overrightarrow{P_2})=2$,  $\chi_o(\overrightarrow{P_3})=3$,
$\chi_o(\overrightarrow{C_4})=4$, $\chi_o(\overrightarrow{C_5})=5$.
\end{example}

An oriented graph $G=(V,E)$ is an {\em oriented clique} ({\em o-clique})
if $\chi_o(G) = |V|$.
Thus all graphs given in Example~\ref{ex-color} are oriented cliques.
Further by Observation \ref{obs-low} every tournament is an  oriented clique.
Thus, for DAGs the oriented chromatic number is unbounded.

We consider the following decision problem.

\begin{desctight}
\item[Name]      Oriented Chromatic Number ($\OCN$)
\item[Instance]  An oriented graph $G=(V,E)$ and a positive integer $r \leq |V|$.
\item[Question]  Is there an oriented $r$-coloring for $G$?
\end{desctight}

If $r$ is constant, i.e.\  not part of the input, the corresponding problem
is denoted by $r$-Oriented Chromatic Number ($\OCN_{r}$).
If  $r\leq 3$, then  $\OCN_{k}$ can be decided in polynomial time, while $\OCN_{4}$ is NP-complete \cite{KMG04}. 
$\OCN_{4}$ is even known to be NP-complete for several restricted
classes of digraphs, e.g.  for bounded degree DAGs \cite{CD06},  bounded degree bipartite
oriented graph \cite{CD06}, graphs with K-width 1 and DAG-depth 3 \cite{GH10}, 
DAGs of K-width 3 and DAG-depth 5 \cite{GHKLOR14}, digraphs of DAG-width 2, K-width 1 and DAG-depth 3 
\cite{GHKLOR14}, and acyclic oriented graphs whose underlying graph is connected, planar, bipartite 
and has maximum degree 3   \cite{CFGK16}.

Up to now, the definition of oriented coloring was frequently
applied to undirected graphs. For an undirected graph $G$ the
maximum value $\chi_o(G')$ of all possible orientations $G'$ of
$G$ is considered. In this sense, every tree  has oriented chromatic number at most $3$ and
every cycle $C_n$ has oriented chromatic number at most $5$.
For several further graph classes there exist bounds on the oriented chromatic number.
Among these are outerplanar graphs \cite{Sop97}, Halin graphs \cite{DS14}, and planar graphs \cite{Mar13}.
See \cite{Sop16} for a survey.


To advance research in this field, we consider oriented graph
coloring on recursively defined oriented graph classes.

\section{Coloring transitive acyclic digraphs}\label{sec-ta}

In this section we will use the concept of
perfectly orderable graphs and Theorem \ref{th-pe}
in order to find oriented colorings of transitive acyclic digraphs.

\begin{theorem}\label{algop2}
Let $G$ be a transitive acyclic digraph. Then,  every greedy coloring along a topological ordering of $G$
leads to an optimal oriented coloring of $G$ and
$\chi_o(G)$  can be computed in linear time.
\end{theorem}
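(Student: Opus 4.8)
The plan is to show that the coloring $c$ produced by the greedy algorithm along a fixed topological ordering $\pi$ is simultaneously (i) an optimal proper coloring of the underlying graph $\un(G)$ and (ii) a valid oriented coloring of $G$; combining these with Observation \ref{obs-low} then yields optimality. Throughout I identify $G$ with the strict partial order whose comparabilities are exactly the arcs: since $G$ is transitive and acyclic, $(u,v)\in E$ holds precisely when $u<v$ in this order, so $\un(G)$ is a comparability graph and, along $\pi$, the already-colored neighbors of a vertex $v$ are exactly its in-neighbors $N^-(v)=\{w\mid (w,v)\in E\}$.

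For (i), I would first argue that every topological ordering $\pi$ is a perfect order of $\un(G)$ via Theorem \ref{th-pe}. Suppose there were a forbidden induced $P_4$ on $\{a,b,c,d\}$ with edges $\{a,b\},\{b,c\},\{c,d\}$ and $\pi(a)<\pi(b)<\pi(c)$, $\pi(d)<\pi(c)$. Because $\pi$ is topological, these three edges are the arcs $(a,b)$, $(b,c)$, $(d,c)$; transitivity applied to $(a,b)$ and $(b,c)$ forces the arc $(a,c)$, hence the edge $\{a,c\}$, contradicting that the $P_4$ is induced (so $\{a,c\}$ must be a non-edge). Thus no obstruction exists, $\pi$ is perfect, and greedy colors $\un(G)$ with exactly $\chi(\un(G))$ colors.

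The heart of the argument is (ii), and the step I expect to be the real obstacle: an optimal proper coloring need not be an oriented coloring (e.g.\ on two disjoint arcs one can properly $2$-color so that the two arcs run oppositely between the color classes), so I must exploit the specific structure of the greedy-along-$\pi$ coloring. I would establish the monotonicity claim that $(u,v)\in E$ implies $c(u)<c(v)$. Here transitivity does the work a second time: from $u<v$ we get $N^-(u)\subseteq N^-(v)$ and $u\in N^-(v)$, while greedy minimality forces $\{1,\dots,c(u)\}\subseteq\{c(w)\mid w\in N^-(v)\}$, so $c(v)\ge c(u)+1$. Granting this, the second condition of Definition \ref{def-oc} is immediate: for arcs $(u,v)$ and $(x,y)$ with $c(v)=c(x)$ one has $c(u)<c(v)=c(x)<c(y)$, whence $c(u)\ne c(y)$; the first condition is just properness. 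Hence $c$ is an oriented coloring.

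It remains to assemble the pieces. The coloring $c$ is an oriented coloring using $\chi(\un(G))$ colors, while Observation \ref{obs-low} gives $\chi(\un(G))\le\chi_o(G)$; therefore $c$ is optimal and $\chi_o(G)=\chi(\un(G))$, and since $\pi$ was an arbitrary topological ordering the conclusion holds for every such ordering. For the complexity, a topological ordering is computable in time $\bigo(n+m)$ and the greedy pass is likewise $\bigo(n+m)$, giving the claimed linear running time. As a cross-check on optimality, the monotonicity in fact yields $c(v)=1+\max\{c(w)\mid w\in N^-(v)\}$, so the number of colors equals the length of a longest chain, matching $\chi(\un(G))$ by Mirsky's theorem; I would nonetheless keep the perfect-order route as the main line to stay within the paper's framework.
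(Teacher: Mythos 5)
Your proposal is correct and follows essentially the same route as the paper's proof: perfectness of the topological ordering via Theorem \ref{th-pe} (using transitivity to exclude the obstructing induced $P_4$), the monotonicity claim $c(u)<c(v)$ along arcs established through transitivity, and optimality from the lower bound of Observation \ref{obs-low}. The only differences are cosmetic improvements on your part: you prove monotonicity directly via $N^-(u)\subseteq N^-(v)$ rather than by the paper's contradiction with an earlier blocking vertex, and you correctly restrict the perfectness claim to topological orderings where the paper loosely asserts it for every linear ordering.
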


\begin{proof}
Let $G$ be a transitive acyclic digraph.
Since $G=(V,E)$ is  acyclic  there is a topological ordering $t$ for $G$.
Since $G$ is transitive, it does not contain the  following orientation
of a $P_4$ as  an induced subdigraph.
$$\bullet \rightarrow \bullet \rightarrow\bullet\leftarrow \bullet$$
By Theorem \ref{th-pe}  every linear ordering and thus also $t$ is perfect on the vertex set of 
graph $\un(G)=(V,E_u)$.
Let $c:V\to \{1,\ldots,k\}$ be a coloring for $\un(G)$ obtained by the greedy
algorithm (Algorithm \ref{algo-bip}) for $t$ on $V$. We
show that $c$ is an oriented coloring for $G$ by verifying the two properties of 
Definition \ref{def-oc}.
\begin{itemize}
	\item Property $c(u)\neq c(v)$ holds for every $(u,v)\in E$ since  $c(u)\neq c(v)$ holds for every $\{u,v\}\in E_u$.

	\item Property $c(u)\neq c(y)$ for every two arcs $(u,v)\in E$ and $(x,y)\in E$ with $c(v)=c(x)$ 
	holds by the following argumentation.

	Assume there is an arc $(v_i,v_j)\in E$ with $v_i<v_j$ in $t$ but
$c(v_i)>c(v_j)$. Then, when coloring $v_i$ we would have taken $c(v_j)$
if possible, as we always take the minimum possible color value.
Since this was not possible there must have been an other vertex $v_k<v_i$
which was colored before $v_i$ with $c(v_k)=c(v_j)$ and $(v_k,v_i)\in
E$. But if $(v_k,v_i)\in E$ and $(v_i,v_j)\in E$, due to transitivity it
must also hold that $(v_k,v_j)\in E$ and consequently, $c(v_k)=c(v_j)$ is not
possible. Thus, the assumption was wrong and for every arc $(v_i,v_j)\in
E$ with $v_i<v_j$ in $t$ it must hold that $c(v_i)<(c_j)$.

\end{itemize}
The optimality of oriented coloring $c$ follows since the lower bound of Observation \ref{obs-low}
is achieved.
\end{proof}

In order to state the next result, let $\omega(G)$
be the number of vertices in a largest clique in the (undirected) graph $G$.

\begin{corollary}\label{cor-tt-a}
Let $G$ be a transitive
acyclic digraph. Then, it holds that
$$\chi_o(G)=\chi(\un(G))=\omega(\un(G))$$ and all three values can be
computed in linear time.
\end{corollary}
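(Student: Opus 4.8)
The plan is to establish the chain of equalities $\chi_o(G)=\chi(\un(G))=\omega(\un(G))$ for a transitive acyclic digraph $G$, relying on Theorem \ref{algop2} and the structural properties of transitivity. The first equality is essentially immediate from what precedes: Theorem \ref{algop2} shows that the greedy coloring along a topological ordering is an optimal oriented coloring, and in its proof the optimality is justified precisely because the lower bound of Observation \ref{obs-low}, namely $\chi(\un(G))\leq \chi_o(G)$, is achieved. Since the greedy coloring uses exactly $\chi(\un(G))$ colors (it is, after all, an optimal coloring of $\un(G)$ by Theorem \ref{th-pe}), we conclude $\chi_o(G)=\chi(\un(G))$.

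For the second equality $\chi(\un(G))=\omega(\un(G))$, I would argue that the underlying undirected graph of a transitive acyclic digraph is a comparability graph and hence perfect. Concretely, since $G$ is acyclic and transitive, its arc relation is a strict partial order on $V$, so $\un(G)$ is the comparability graph of this partial order. Comparability graphs are well known to be perfect, meaning $\chi(H)=\omega(H)$ holds for every induced subgraph $H$; in particular $\chi(\un(G))=\omega(\un(G))$. Alternatively, and perhaps more in the spirit of the excerpt, one can observe that a clique in $\un(G)$ corresponds to a set of pairwise comparable vertices, i.e.\ a chain in the partial order, which (being acyclic and transitive) forms a transitive tournament; such a tournament on $\omega(\un(G))$ vertices is an o-clique needing $\omega(\un(G))$ colors, giving $\omega(\un(G))\leq \chi_o(G)$, while the greedy algorithm never uses more than $\omega(\un(G))$ colors along the topological order, giving the reverse inequality.

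The linear-time claim follows because all three quantities are computed simultaneously by the single greedy pass of Theorem \ref{algop2}: a topological ordering of an acyclic digraph is obtainable in linear time, the greedy coloring runs in linear time, and the number of colors it produces equals all three values. I would note that the size of a largest clique $\omega(\un(G))$ is read off directly as the maximum color used, so no separate clique computation is needed.

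The main obstacle I anticipate is being careful about the transitive-closure subtlety. The degree-based or neighborhood-based greedy bound shows the color count is at most the size of the largest clique, but one must confirm that the largest clique genuinely has size $\omega(\un(G))$ and that it corresponds to a chain whose vertices are pairwise adjacent in $\un(G)$ — this uses transitivity crucially, since in an arbitrary acyclic digraph a directed path need not induce a clique in the underlying graph. Invoking transitivity ensures every chain is a clique and vice versa, so the chain number equals $\omega(\un(G))$, and the equalities close up cleanly.
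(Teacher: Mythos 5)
Your proposal is correct, and its core coincides with the paper's (implicit) argument: the paper states Corollary \ref{cor-tt-a} without a separate proof because the proof of Theorem \ref{algop2} already delivers everything --- the topological ordering is a perfect order on $\un(G)$ by Theorem \ref{th-pe}, so the greedy pass produces an optimal coloring of $\un(G)$ that is simultaneously an oriented coloring of $G$, attaining the lower bound of Observation \ref{obs-low} and yielding $\chi_o(G)=\chi(\un(G))$ together with the linear running time, exactly as you argue. Where you diverge is in how you justify $\chi(\un(G))=\omega(\un(G))$: the paper's intended route is Chv\'atal's theorem that perfectly orderable graphs are perfect, whereas you observe that the arc relation of a transitive acyclic digraph is a strict partial order, so $\un(G)$ is a comparability graph and hence perfect --- an equivalent and equally standard reduction to perfection. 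Your second, elementary argument is the more self-contained of the two and is closer in spirit to Proposition \ref{pro1} ($\chi_o(G)=\ell(G)+1$): the fact, established inside the proof of Theorem \ref{algop2}, that colors strictly increase along arcs means a vertex of color $k$ terminates a directed path on $k$ vertices, which by transitivity induces a clique in $\un(G)$, giving $\chi_o(G)\leq\omega(\un(G))$; conversely, any clique of $\un(G)$ induces an (acyclic, hence transitive) tournament in $G$, which is an o-clique, so $\omega(\un(G))\leq\chi_o(G)$ by Corollary \ref{le-isubdigraph}. You are also right to flag transitivity as the crux: it is exactly what makes chains and cliques coincide, and without it one only gets the inequality of Proposition \ref{pro2} via the transitive closure. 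Finally, reading $\omega(\un(G))$ off as the maximum color used by the single greedy pass is legitimate given these equalities, so the linear-time claim for all three values stands.
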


For some oriented graph $G$ we denote by $\ell(G)$   the length of a longest oriented
path in $G$.

\begin{proposition}\label{pro1}
Let $G$ be a transitive acyclic digraph. Then, it holds that $\chi_o(G)= \ell(G)+1$.
\end{proposition}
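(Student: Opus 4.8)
The plan is to build on Corollary \ref{cor-tt-a}, which already establishes $\chi_o(G)=\omega(\un(G))$ for every transitive acyclic digraph $G$. It therefore suffices to prove the purely combinatorial identity $\omega(\un(G))=\ell(G)+1$, i.e.\ that the size of a largest clique in the underlying graph equals the number of vertices on a longest oriented path of $G$. I would establish this by two inequalities, exhibiting a correspondence between cliques of $\un(G)$ and oriented paths of $G$ that is forced by transitivity and acyclicity.

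For the inequality $\ell(G)+1\le\omega(\un(G))$, first take a longest oriented path $v_0\to v_1\to\cdots\to v_\ell$ in $G$, where $\ell=\ell(G)$. Using transitivity, the arcs $(v_i,v_{i+1})\in E$ and $(v_{i+1},v_{i+2})\in E$ force $(v_i,v_{i+2})\in E$, and a short induction gives $(v_i,v_j)\in E$ for all $i<j$. Hence the $\ell+1$ vertices $v_0,\dots,v_\ell$ are pairwise adjacent in $\un(G)$ and form a clique, so $\omega(\un(G))\ge\ell(G)+1$.

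For the reverse inequality $\omega(\un(G))\le\ell(G)+1$, take a clique $C$ of $\un(G)$ of size $k=\omega(\un(G))$. Since $G$ is an oriented graph, between any two vertices of $C$ there is exactly one arc, so $G[C]$ is a tournament; because $G$ is acyclic, $G[C]$ is an acyclic tournament, hence a transitive tournament, whose topological ordering $u_1,\dots,u_k$ satisfies $(u_i,u_j)\in E$ for all $i<j$. In particular $u_1\to u_2\to\cdots\to u_k$ is an oriented path $\overrightarrow{P_k}$ in $G$ of length $k-1$, so $\ell(G)\ge k-1=\omega(\un(G))-1$. Combining the two bounds yields $\omega(\un(G))=\ell(G)+1$, and with Corollary \ref{cor-tt-a} we conclude $\chi_o(G)=\ell(G)+1$.

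The only delicate point is the claim that a clique of $\un(G)$ induces a transitive tournament carrying a directed Hamiltonian path; this is precisely where acyclicity is essential, since an acyclic tournament is transitive and admits a directed path along its topological order, whereas transitivity of $G$ is what makes a longest oriented path close up into a clique. Everything else is routine, so I expect no serious obstacle beyond making this clique--path dictionary precise.
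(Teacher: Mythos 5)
Your proof is correct, but it takes a genuinely different route from the paper's. The paper disposes of Proposition \ref{pro1} in one line by re-inspecting the proof of Theorem \ref{algop2}: the greedy coloring along a topological ordering of a transitive acyclic digraph uses exactly $\ell(G)+1$ colors (the color of a vertex is one plus the length of a longest path ending there, since colors strictly increase along arcs and the predecessors of a vertex carry all smaller colors), and optimality was already established there. You instead take Corollary \ref{cor-tt-a} as the starting point and prove the purely structural identity $\omega(\un(G))=\ell(G)+1$ via a clique--path correspondence: transitivity closes a longest oriented path into a clique (giving $\omega(\un(G))\geq\ell(G)+1$), while acyclicity alone makes any clique of $\un(G)$ induce an acyclic, hence transitive, tournament whose topological order carries a directed Hamiltonian path (giving $\ell(G)\geq\omega(\un(G))-1$). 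Both directions are sound: digon-freeness, which you need for the tournament step, is automatic because an acyclic digraph contains no $\overrightarrow{C_2}$, and the vertices of a longest oriented path are distinct, so the $u\neq w$ proviso in the paper's definition of transitivity never interferes with your induction. There is no circularity, since Corollary \ref{cor-tt-a} precedes Proposition \ref{pro1} and does not depend on it. What your version buys is an explicit, self-contained combinatorial statement ($\omega(\un(G))=\ell(G)+1$) together with a clean separation of which hypothesis carries which inequality --- transitivity for the lower bound on $\omega$, acyclicity alone for the upper bound --- whereas the paper's version buys brevity, extracting the color count from the greedy analysis already performed in Theorem \ref{algop2} with no new argument.
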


\begin{proof}
The proof of Theorem \ref{algop2} leads to an optimal oriented coloring using $\ell(G)+1$ colors.
\end{proof}

Next, we consider oriented colorings of oriented graphs with bounded vertex degree.
For every  oriented graph the oriented chromatic number
can be bounded (exponentially) by its  maximum vertex degree $\Delta$  \cite{KSZ97}.
For small vertex degrees $\Delta\leq 7$ there are better bounds in \cite{Duf19,DOPS20}.

\begin{corollary}\label{cor-tt-av}
Let $G$ be a transitive acyclic digraph. Then, it holds that $\chi_o(G)\leq \Delta(G)+1$.
\end{corollary}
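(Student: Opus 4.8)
The plan is to combine the identity $\chi_o(G)=\omega(\un(G))$ from Corollary \ref{cor-tt-a} with a direct estimate on the degree of a suitable vertex inside a maximum clique. First I would fix a maximum clique $K$ in $\un(G)$ and set $k=|K|=\omega(\un(G))=\chi_o(G)$. Every two vertices of $K$ are joined by an arc in $G$, and since $G$ is acyclic these arcs are all oriented consistently with a topological ordering $t$: writing the clique as $u_1<u_2<\cdots<u_k$ with respect to $t$, each pair $u_i,u_j$ with $i<j$ must carry the arc $(u_i,u_j)$, since the reverse arc would contradict the topological order. Hence $K$ induces a transitive tournament in $G$.

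Next I would locate a high-degree vertex. The top vertex $u_k$ receives an arc from each of $u_1,\ldots,u_{k-1}$, so $\ideg(u_k)\geq k-1$. Therefore
$$\Delta(G)\geq \odeg(u_k)+\ideg(u_k)\geq \ideg(u_k)\geq k-1 = \omega(\un(G))-1 = \chi_o(G)-1,$$
which rearranges to the desired inequality $\chi_o(G)\leq \Delta(G)+1$. Alternatively one could start from Proposition \ref{pro1} and take a longest oriented path $v_0\to\cdots\to v_{\ell(G)}$; by transitivity its vertices again form a transitive tournament whose sink $v_{\ell(G)}$ has in-degree at least $\ell(G)=\chi_o(G)-1$, giving the same bound.

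I do not expect a serious obstacle here. The only point requiring care is the observation that in a transitive acyclic digraph a clique of the underlying graph is forced to be a transitive tournament, so that one of its vertices necessarily accumulates in-degree equal to the clique size minus one. Once this is noted, the degree bound is immediate and no optimisation over the choice of vertex is required.
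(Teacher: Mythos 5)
Your proposal is correct and in essence the paper's own argument: your ``alternative'' route is exactly the paper's proof (Proposition~\ref{pro1} plus a degree count on a longest oriented path, the paper using the outdegree of the first vertex where you use the indegree of the last), and your main route via Corollary~\ref{cor-tt-a} is an equivalent reformulation, since in a transitive acyclic digraph the maximum cliques of $\un(G)$ are precisely the vertex sets of longest oriented paths, so $\omega(\un(G))=\ell(G)+1$. In both versions the key point is the same: $G$ contains a transitive tournament on $\chi_o(G)$ vertices whose extreme vertex has degree at least $\chi_o(G)-1$, whence $\Delta(G)\geq \chi_o(G)-1$.
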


\begin{proof}Let $G$ be a transitive acyclic digraph.
By Proposition \ref{pro1} and the fact that the first vertex of a longest path within
a transitive digraph $G$ has outdegree at least $\ell(G)$, it follows that the oriented chromatic number of $G$ can be estimated by $\chi_o(G)= \ell(G)+1 \leq \Delta(G)+1$.
\end{proof}

\begin{proposition}\label{pro2}
Let $G$ be an acyclic digraph. Then, it holds that $\chi_o(G)\leq \ell(G)+1$.
\end{proposition}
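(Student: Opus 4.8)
The plan is to reduce the general acyclic case to the transitive acyclic case already handled in Proposition \ref{pro1}. Given an acyclic digraph $G$, I would pass to its transitive closure $tc(G)$. Since $G$ is acyclic, reachability is antisymmetric, so $tc(G)$ has no pair of opposite arcs and is again an acyclic digraph; by construction it is transitive, hence $tc(G)$ is a transitive acyclic digraph to which the preceding results apply. Moreover $G$ is a (spanning) subdigraph of $tc(G)$, because every arc $(u,v)$ of $G$ witnesses that $v$ is reachable from $u$ and hence $(u,v)\in E(tc(G))$. Applying Corollary \ref{le-isubdigraph} and then Proposition \ref{pro1} to $tc(G)$ gives
\[
\chi_o(G)\ \le\ \chi_o(tc(G))\ =\ \ell(tc(G))+1 .
\]
It then remains only to compare the longest-path parameter of $G$ with that of its transitive closure.

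The key step is therefore the identity $\ell(tc(G))=\ell(G)$. One inequality is immediate: any oriented path of $G$ is also an oriented path of $tc(G)$, so $\ell(tc(G))\ge\ell(G)$; for the bound I in fact only need the reverse inequality $\ell(tc(G))\le\ell(G)$. To see this, observe that a path $v_0\to v_1\to\cdots\to v_m$ in $tc(G)$ corresponds to a chain in the reachability order of $G$, i.e.\ each $v_{i+1}$ is reachable from $v_i$ in $G$. Concatenating paths of $G$ realizing these reachabilities yields a walk from $v_0$ to $v_m$ of length at least $m$; since $G$ is acyclic every walk is in fact a path (a repeated vertex would close a cycle), so $\ell(G)\ge m$. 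Hence $\ell(tc(G))\le\ell(G)$, and combining this with the displayed inequality gives $\chi_o(G)\le\ell(G)+1$.

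I expect the main (and essentially only) obstacle to be the clean handling of this path/chain correspondence, in particular the point that in a DAG a concatenation of paths cannot revisit a vertex and is therefore itself a path of the claimed length; everything else is routine bookkeeping on the transitive closure.

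An alternative, self-contained route avoids Proposition \ref{pro1} altogether and makes the bound transparent: color each vertex $v$ by $c(v)=1+(\text{length of a longest oriented path of } G \text{ ending at } v)$, which uses colors in $\{1,\dots,\ell(G)+1\}$. Since appending an arc $(u,v)$ to a longest path ending at $u$ forces $c(u)<c(v)$, the coloring $c$ strictly increases along every arc. Consequently $c(u)\ne c(v)$ for every arc $(u,v)$, and for two arcs $(u,v),(x,y)$ with $c(v)=c(x)$ we obtain $c(u)<c(v)=c(x)<c(y)$, so $c(u)\ne c(y)$; thus $c$ satisfies both conditions of Definition \ref{def-oc} and is an oriented coloring with at most $\ell(G)+1$ colors. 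I would likely present this second argument for brevity, while regarding the transitive-closure reduction as the more structural proof consistent with the flow of the section.
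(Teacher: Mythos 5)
Your first argument is exactly the paper's proof: pass to the transitive closure $G'=tc(G)$, apply Corollary \ref{le-isubdigraph} and Proposition \ref{pro1}, and use $\ell(G')=\ell(G)$. The paper asserts this last identity without justification, whereas you supply it (a path in $tc(G)$ is a chain in the reachability order of $G$; the concatenated walk in a DAG cannot revisit a vertex, hence is a path of at least the same length), and you also correctly note the prerequisite that acyclicity makes $tc(G)$ an oriented graph, so your version is in fact more complete than the paper's. Your alternative argument --- coloring each vertex by one plus the length of a longest oriented path ending at it, observing that this is strictly increasing along arcs, and deducing both conditions of Definition \ref{def-oc} --- is a genuinely different, self-contained route: it bypasses Proposition \ref{pro1}, Corollary \ref{le-isubdigraph}, and the perfect-order machinery of Theorem \ref{algop2} entirely, making the bound transparent at the cost of losing the structural connection to transitive closures that the section emphasizes. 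One small point there: you should say explicitly that appending the arc $(u,v)$ to a longest path ending at $u$ yields a path because that path cannot already contain $v$ (otherwise $G$ would contain a directed cycle through $u$ and $v$); this is the only step you use implicitly. Both routes are correct.
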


\begin{proof}
Let $G$ be an  acyclic digraph and $G'$ its transitive closure.
Using Corollary \ref{le-isubdigraph} and Proposition \ref{pro1} we know that
$\chi_o(G)\leq \chi_o(G')= \ell(G')+1=\ell(G)+1$.
\end{proof}

Now, we consider oriented graph
coloring on recursively defined oriented graph classes.

\section{Coloring oriented  co-graphs}\label{sec-cog}

We recall operations 
which have been considered  by Bechet et al.\ in \cite{BGR97}.
Let $G_1=(V_1,E_1)$ and  $G_2=(V_2,E_2)$ be two vertex-disjoint digraphs.
\begin{itemize}
\item
The {\em disjoint union} of $G_1$ and $G_2$,
denoted by $G_1 \oplus G_2$,
is the digraph with vertex set $V_1\cup V_2$ and
arc set $E_1\cup E_2$.

\item
The {\em order composition} of $G_1$ and  $G_2$,
denoted by $G_1\oslash  G_2$,
is defined by their disjoint union plus all possible arcs from
vertices of $G_1$ to vertices of $G_2$.
\end{itemize}

By  omitting the series composition
within the definition  of directed co-graphs in \cite{CP06}, we obtain the class of all
{\em oriented  co-graphs}.

\begin{definition}[Oriented co-graphs]\label{dcog}
The class of {\em oriented complement reducible graphs}, {\em oriented co-graphs} for short,
is recursively defined as follows.
\begin{enumerate}
\item Every digraph on a single vertex $(\{v\},\emptyset)$,
denoted by $v$, is an {\em oriented co-graph}.

\item If  $G_1$ and $G_2$  are two vertex-disjoint oriented co-graphs, then
\begin{enumerate}
\item
the disjoint union
$G_1\oplus G_2$, and
\item
the order composition
$G_1\oslash G_2$  are {\em oriented co-graphs}.
\end{enumerate}
\end{enumerate}
The class of oriented co-graphs is denoted by $\OC$.
\end{definition}

Every expression $X$ using  the operations of Definition  \ref{dcog}
is called a {\em di-co-expression}. Example \ref{ex-orico} illustrates these notations.

\begin{example}\label{ex-orico}
The following di-co-expression $X$ defines the oriented graph  shown in Figure \ref{F03}.
$$
X=((v_1\oslash v_3) \oslash(v_2 \oplus  v_4)) \label{eq-ori-c4x}
$$
\end{example}

\begin{figure}[hbtp]
\centering
\centerline{\includegraphics[width=0.25\textwidth]{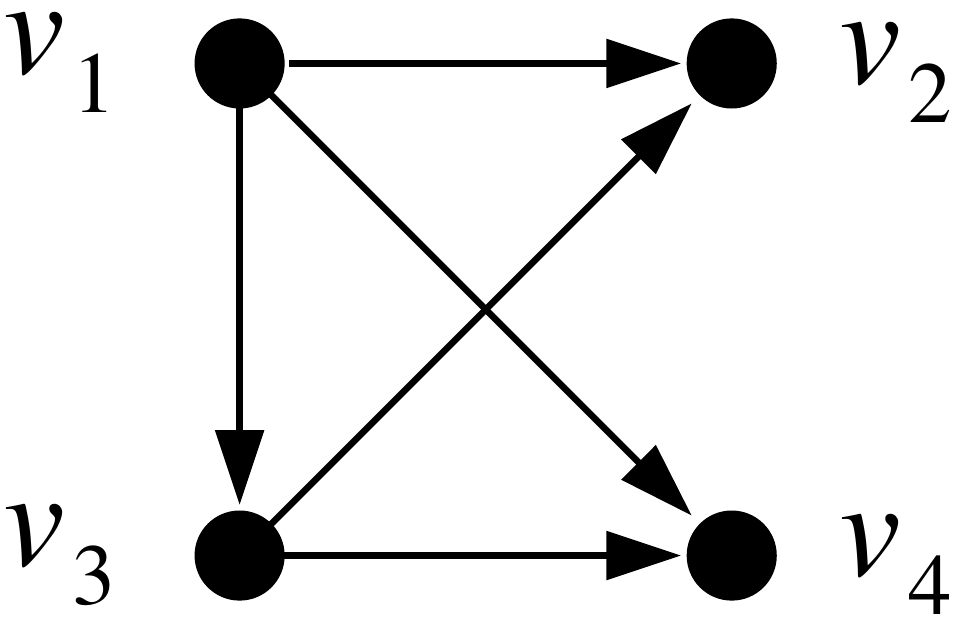}}
\caption{Digraph defined by di-co-expression $X$ in Example \ref{ex-orico}.}
\label{F03}
\end{figure}

Several classes of digraphs are included in the set of all oriented co-graphs.

\begin{proposition} 
Every transitive tournament is an oriented co-graph.
\end{proposition}

\begin{proposition} 
Every oriented bipartite graph $\overrightarrow{K_{n,m}}$ is an oriented co-graph.
\end{proposition}

The set of all oriented co-graphs is  closed under taking induced subdigraphs.
Using the notations of \cite{VTL82} we denote the  following orientation
of a $P_4$ as the $N$ graph.
$$N=\bullet \rightarrow \bullet \leftarrow\bullet\rightarrow \bullet$$
The class of oriented co-graphs  can
be characterized by
excluding the four forbidden induced subdigraphs
$\overleftrightarrow{P_2}=(\{u,v\},\{(u,v),(v,u)\})$, $\overrightarrow{P_3}$, $\overrightarrow{C_3}$, and $N$, see \cite{GKR21b}.
The class of oriented co-graphs has already been analyzed by Lawler
in \cite{Law76} and Corneil et al.\ in \cite{CLS81} using the notation
of {\em transitive series-parallel (TSP) digraphs}.

For every oriented co-graph we can define a tree structure,
denoted as {\em di-co-tree}.
The leaves of the di-co-tree represent the
vertices of the digraph and the inner vertices of the di-co-tree  correspond
to the operations applied on the subexpressions defined by the subtrees.
For every oriented co-graph one can construct a di-co-tree in linear time,
see \cite{CP06}.
Oriented co-graphs are a subclass of directed co-graphs \cite{CP06} and both classes 
are interesting from an algorithmic point of view
since several hard graph problems can be solved in
polynomial time  by dynamic programming along the tree structure of
the input graph, see \cite{Ret98,BM14,Gur17a,GR18c,GKR19f,GKR19d,GHKRRW20,GKRRW20}.

\begin{lemma}[\cite{GKR19d}]\label{le-co-gr}Let $G_1$ and $G_2$ be two vertex-disjoint oriented co-graphs.
Then, the following equations hold.
\begin{enumerate}
\item $\chi_o((\{v\},\emptyset)) = 1$
\item $\chi_o(G_1 \oplus G_2)    = \max(\chi_o (G_1),  \chi_o(G_2))$
\item $\chi_o(G_1\oslash G_2)    = \chi_o(G_1) +  \chi_o(G_2)$
\end{enumerate}
\end{lemma}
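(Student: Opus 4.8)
The plan is to prove the three equations of Lemma~\ref{le-co-gr} by analyzing how the disjoint union and order composition interact with the two defining properties of an oriented coloring. The single-vertex case is immediate, since one color suffices and is necessary. For the remaining two cases, I would work with the homomorphism characterization from Observation~\ref{oc-tournament} where convenient, but the cleanest route is to argue directly from Definition~\ref{def-oc}.

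\textbf{Disjoint union.} For the inequality $\chi_o(G_1\oplus G_2)\geq \max(\chi_o(G_1),\chi_o(G_2))$, I would note that $G_1$ and $G_2$ are both subdigraphs of $G_1\oplus G_2$, so by Corollary~\ref{le-isubdigraph} each oriented chromatic number is a lower bound. For the reverse inequality, take optimal oriented colorings $c_1$ and $c_2$ of $G_1$ and $G_2$ using $\max(\chi_o(G_1),\chi_o(G_2))$ colors; since there are no arcs between $V_1$ and $V_2$, the combined coloring that reuses the same color set on both parts trivially satisfies both conditions of Definition~\ref{def-oc} (any pair of arcs forcing a conflict must lie within a single $G_i$, where $c_i$ already guarantees the property). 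This gives equality.

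\textbf{Order composition.} The interesting case is $\chi_o(G_1\oslash G_2)=\chi_o(G_1)+\chi_o(G_2)$. For the upper bound, I would take optimal colorings $c_1$ of $G_1$ and $c_2$ of $G_2$ and shift the colors of $G_2$ so that the two color sets are disjoint, producing a coloring with $\chi_o(G_1)+\chi_o(G_2)$ colors. The first condition holds because every arc lies either inside $G_1$, inside $G_2$, or goes from $V_1$ to $V_2$ (and the latter connect distinct color sets). For the second condition, the only new arcs are the complete set of arcs directed from $V_1$ to $V_2$; I must check that no pair of arcs creates a forbidden configuration. The key observation is that every cross arc points from a $G_1$-color to a $G_2$-color, so if two arcs share a common intermediate color class, either both endpoints involved lie in $G_1$, both in $G_2$, or the direction across the cut is uniform — and because all cross arcs are oriented the same way, no bad pattern arises.

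\textbf{Lower bound for the composition.} I expect this to be the main obstacle. I would argue that in $G_1\oslash G_2$ every vertex of $G_1$ is joined by an arc to every vertex of $G_2$, all oriented from $V_1$ to $V_2$. Hence any color class used in $G_1$ cannot coincide with any color class used in $G_2$: if some color $a$ appeared on a vertex $u\in V_1$ and on a vertex $y\in V_2$, then taking the arc $(u,v)$ for any $v\in V_2$ with appropriate color and the arc $(x,y)$ for suitable $x\in V_1$ would violate the second condition of Definition~\ref{def-oc}, because the uniform orientation of the cut forces a contradiction between having color $a$ as both a source color (in $V_1$) and a sink color (in $V_2$). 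Formalizing this requires a careful choice of the witnessing arcs and using that the complete bipartite connection guarantees their existence; I would make this precise by fixing any arc out of $u$ into $G_2$ and any arc into $y$ out of $G_1$ and checking that $c(v)=c(x)$ can be forced, yielding $c(u)=c(y)$, a contradiction. Consequently the color sets on $V_1$ and on $V_2$ are disjoint, each part needs at least its own oriented chromatic number of colors, and the totals add, giving $\chi_o(G_1\oslash G_2)\geq \chi_o(G_1)+\chi_o(G_2)$ and hence equality.
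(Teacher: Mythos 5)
Your treatment of the disjoint union contains a genuine error. In Definition~\ref{def-oc} the second condition quantifies over \emph{all} pairs of arcs of the digraph, not over pairs lying in a common part: for arcs $(u,v)$ in $G_1$ and $(x,y)$ in $G_2$ with $c(v)=c(x)$ you must still guarantee $c(u)\neq c(y)$. So your claim that ``any pair of arcs forcing a conflict must lie within a single $G_i$'' is false, and gluing two arbitrary optimal colorings on the same color set can produce two color classes joined by arcs in both directions (one direction realized inside $G_1$, the opposite inside $G_2$), i.e.\ a color graph that is not oriented. Indeed, if your argument were valid it would prove $\chi_o(G_1\cup G_2)=\max(\chi_o(G_1),\chi_o(G_2))$ for \emph{all} oriented graphs, which this paper explicitly refutes: Example~\ref{ex-six2} gives two msp-digraphs, each of oriented chromatic number $4$, whose disjoint union has oriented chromatic number $5$. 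The missing idea is that for oriented co-graphs the two colorings can be chosen \emph{compatibly}: oriented co-graphs are transitive and acyclic, so coloring every vertex by the length of a longest path ending in it (equivalently, greedy coloring along a topological ordering, Theorem~\ref{algop2} and Proposition~\ref{pro1}) yields an optimal oriented coloring whose color graph is a transitive tournament. Mapping both $G_1$ and $G_2$ into the \emph{same} transitive tournament on $\max(\chi_o(G_1),\chi_o(G_2))$ vertices gives, via Observation~\ref{oc-tournament}, a homomorphism of $G_1\oplus G_2$ and closes the gap; without some such compatibility argument the upper bound does not follow.

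For the order composition, your upper bound is correct: with disjoint color palettes all cross arcs run from $G_1$-colors to $G_2$-colors, so the verification of the second condition goes through. Your lower bound, however, is both overcomplicated and, as sketched, not realizable: nothing lets you ``force'' $c(v)=c(x)$ for prescribed vertices, so the witnessing pair of arcs you describe need not exist. Fortunately no such argument is needed. Since $G_1\oslash G_2$ contains the arc $(u,y)$ for \emph{every} $u\in V_1$ and $y\in V_2$, the \emph{first} condition of Definition~\ref{def-oc} alone already forces $c(u)\neq c(y)$, hence the color sets used on $V_1$ and on $V_2$ are disjoint; restricting to the induced subdigraphs $G_1$ and $G_2$ (Lemma~\ref{le-col-subdigraph}) then yields at least $\chi_o(G_1)$ plus at least $\chi_o(G_2)$ distinct colors, which is the additive lower bound you want.
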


\begin{theorem}[\cite{GKR19d}]\label{algop}
Let $G$ be an oriented co-graph. Then, an optimal oriented coloring for $G$ and
$\chi_o(G)$  can be computed in linear time.
\end{theorem}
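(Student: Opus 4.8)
The plan is to proceed by structural induction on the recursive definition of oriented co-graphs, exploiting the three identities of Lemma~\ref{le-co-gr} together with the di-co-tree decomposition. First I would recall that for every oriented co-graph $G$ one can construct a di-co-tree $T$ in linear time, where the leaves correspond to the single vertices and each internal node is labelled either $\oplus$ or $\oslash$; this is precisely the parse tree of a di-co-expression defining $G$. The algorithm then processes $T$ in a single bottom-up pass. At each leaf we assign the trivial optimal coloring using a single color, which is optimal by part~1 of Lemma~\ref{le-co-gr}.

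For the inductive step I would maintain at each node $v$ of $T$ both the value $\chi_o(G_v)$ of the subdigraph $G_v$ rooted at $v$ and an actual optimal oriented coloring realizing that value. At a $\oplus$-node with children defining $G_1$ and $G_2$, part~2 of Lemma~\ref{le-co-gr} tells us that $\chi_o(G_1 \oplus G_2) = \max(\chi_o(G_1), \chi_o(G_2))$, so I would keep the coloring of whichever child attains the maximum and reuse its color classes directly on the disjoint union (the vertices of the other part can be colored using a subset of the same color set, which requires only relabelling the smaller coloring into the larger palette). At a $\oslash$-node, part~3 gives $\chi_o(G_1 \oslash G_2) = \chi_o(G_1) + \chi_o(G_2)$, so I would take the optimal coloring of $G_1$ on $\{1,\ldots,\chi_o(G_1)\}$ and the optimal coloring of $G_2$ shifted by $\chi_o(G_1)$ onto the disjoint block $\{\chi_o(G_1)+1,\ldots,\chi_o(G_1)+\chi_o(G_2)\}$; one checks easily that all arcs of the order composition go from the lower block to the higher block, so the second condition of Definition~\ref{def-oc} is preserved and the combined coloring is optimal. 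Correctness of the computed numbers follows immediately from Lemma~\ref{le-co-gr} by induction, and correctness of the produced coloring follows because at each step we verify the two defining properties are maintained.

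The remaining obligation is the running-time bound. I would argue that the di-co-tree has $\bigo(n)$ nodes and can be built in linear time by the cited result of \cite{CP06}. The subtlety, and the step I expect to be the main obstacle, is ensuring the coloring itself is produced in total linear time rather than incurring a cost proportional to the number of colors at every node: naively shifting or relabelling a color class at a $\oslash$-node could cost $\bigo(\chi_o(G))$ per node and hence $\bigo(n \cdot \chi_o(G))$ overall. The standard way to avoid this is to compute only the numeric values $\chi_o(G_v)$ in the bottom-up pass, and then determine the explicit color of each individual vertex in a single top-down pass that propagates an additive offset: each vertex's final color is its locally assigned value plus the accumulated shift from the $\oslash$-ancestors above it. Since each vertex receives its color by a constant amount of arithmetic, and each node of $T$ is visited a constant number of times, the total time is $\bigo(n)$.

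I would close by noting that the optimality of the resulting coloring is guaranteed not merely as a numerical coincidence but because each of the three equations in Lemma~\ref{le-co-gr} is an exact identity, so the induction yields a coloring meeting the proven lower bound at every node; hence the coloring output at the root of $T$ uses exactly $\chi_o(G)$ colors.
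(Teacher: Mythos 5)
Your proposal follows the dynamic-programming route along the di-co-tree using the identities of Lemma~\ref{le-co-gr}, which is exactly the proof of \cite{GKR19d} that this paper cites for Theorem~\ref{algop}; the paper's own in-text argument for the statement is different: it observes that every oriented co-graph is transitive and acyclic and invokes Theorem~\ref{algop2} (greedy coloring along any topological ordering is optimal, via Chv\'atal's characterization of perfect orders, Theorem~\ref{th-pe}), yielding Corollary~\ref{c-co}. The greedy route buys generality --- it covers all transitive acyclic digraphs, a strict superclass of $\OC$ since the digraph $N$ is not excluded --- and it sidesteps maintaining explicit colorings at internal tree nodes; your route buys a constructive derivation of the formulas in Lemma~\ref{le-co-gr} and a template that survives when no perfect-order argument is available (the msp-digraph algorithm of Lemma~\ref{le1} is built in the same style). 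Your offset-propagation trick (numeric bottom-up pass, then a top-down pass accumulating additive shifts from $\oslash$-ancestors) is a correct and clean way to get the coloring itself in $\bigo(n)$.

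There is, however, one under-justified step, and it sits at the $\oplus$-nodes, which is precisely where oriented colorings are delicate. You claim that at a $\oplus$-node one can keep the coloring of the child attaining the maximum and color the other part by ``relabelling the smaller coloring into the larger palette,'' and you assert that ``at each step we verify the two defining properties'' --- but you only actually verify them at $\oslash$-nodes. Condition 2 of Definition~\ref{def-oc} constrains pairs of arcs lying in \emph{different} components of a disjoint union, so merging two independently chosen optimal colorings can fail: if $G_1$ has an arc colored $1\to 2$ and the relabelled coloring of $G_2$ has an arc colored $2\to 1$, the combined coloring is not oriented. The paper's Example~\ref{ex-six2} shows this is not a hypothetical worry: two msp-digraphs of oriented chromatic number $4$ whose disjoint union requires $5$ colors. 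Your construction goes through only because of an invariant you never state: every coloring your algorithm builds is monotone along arcs, i.e.\ $(u,v)\in E$ implies $c(u)<c(v)$, so its color graph is a subdigraph of the transitive tournament ordered by color. This holds at leaves, is preserved by the shift at $\oslash$-nodes, and is preserved at $\oplus$-nodes \emph{provided} the smaller coloring is embedded order-preservingly (e.g.\ kept as is on $\{1,\ldots,\chi_o(G_2)\}\subseteq\{1,\ldots,\chi_o(G_1)\}$); an arbitrary injective relabelling would break it. With the invariant made explicit, condition 2 is immediate ($c(u)<c(v)=c(x)<c(y)$ forces $c(u)\neq c(y)$) and your induction closes, with optimality supplied by Lemma~\ref{le-co-gr}. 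It is no accident that this invariant is exactly what the paper's greedy proof of Theorem~\ref{algop2} produces automatically.
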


The result in \cite{GKR19d} concerning the oriented coloring on oriented co-graphs is
based on a dynamic
programming along a di-co-tree for the given oriented co-graph as input.
Since every oriented co-graph is transitive and acyclic,
Theorem \ref{algop2} leads to the next result, which re-proves
Theorem \ref{algop}.

\begin{corollary}\label{c-co}
Let $G$ be an oriented co-graph. Then,  every greedy coloring along a topological ordering of $G$
leads to an optimal oriented coloring of $G$ and
$\chi_o(G)$  can be computed in linear time.
\end{corollary}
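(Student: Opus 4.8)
The plan is to derive Corollary~\ref{c-co} as an immediate instance of Theorem~\ref{algop2}. The key observation is that every oriented co-graph falls within the hypothesis of that theorem, so I need only verify two structural facts: that oriented co-graphs are acyclic, and that they are transitive. Once both are established, Theorem~\ref{algop2} applies verbatim and yields the conclusion, simultaneously re-proving Theorem~\ref{algop}.

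First I would confirm acyclicity and transitivity by induction on the di-co-expression (equivalently, along the di-co-tree). The base case, a single vertex, is trivially both acyclic and transitive. For the inductive step I would treat the two operations of Definition~\ref{dcog}. The disjoint union $G_1 \oplus G_2$ introduces no new arcs, so if $G_1$ and $G_2$ are acyclic and transitive, then so is $G_1 \oplus G_2$ (any path and any transitivity witness lies entirely in one part). The order composition $G_1 \oslash G_2$ adds all arcs from $V_1$ to $V_2$; here I would check that no directed cycle can arise, since every added arc points from the $G_1$-side to the $G_2$-side and there are no arcs back, and that transitivity is preserved, since any arc chain $(u,v),(v,w)$ either stays within one factor or passes once from $V_1$ to $V_2$, and in the latter case the required arc $(u,w)$ is present precisely because all $V_1$-to-$V_2$ arcs exist.

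With acyclicity and transitivity in hand, I would invoke Theorem~\ref{algop2} directly: a topological ordering of $G$ exists by acyclicity, and greedy coloring along any such ordering produces an optimal oriented coloring whose value equals $\chi_o(G)$, computable in linear time. It also bears noting that a di-co-tree (hence a suitable linear structure) is obtainable in linear time by \cite{CP06}, so the linear-time claim is fully supported.

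I do not anticipate a genuine obstacle, as the statement is essentially a corollary by subsumption; the only point requiring care is stating cleanly why the order composition preserves transitivity, since that is exactly the property that rules out the forbidden induced orientation of $P_4$ used in Theorem~\ref{algop2}. Alternatively, and even more briefly, I could cite the already-noted fact in the excerpt that every oriented co-graph is transitive and acyclic, and then the corollary follows in a single line from Theorem~\ref{algop2}.
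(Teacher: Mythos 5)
Your proposal is correct and matches the paper's proof exactly: the paper likewise derives Corollary~\ref{c-co} as an immediate consequence of Theorem~\ref{algop2}, using the fact that every oriented co-graph is transitive and acyclic. Your inductive verification of transitivity and acyclicity over the di-co-expression is a sound (and slightly more explicit) justification of a fact the paper states without proof.
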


Theorem \ref{algop2} is more general than
Corollary \ref{c-co} since it does not exclude $N$
which is a
forbidden induced subdigraph for oriented co-graphs. It holds that
$$\OC=\free\{\overleftrightarrow{P_2},\overrightarrow{P_3}, \overrightarrow{C_3}, N\}\subseteq \free\{\overleftrightarrow{P_2},\overrightarrow{P_3}, \overrightarrow{C_3}\}$$
and $\free\{\overleftrightarrow{P_2},\overrightarrow{P_3}, \overrightarrow{C_3}\}$ is equivalent to the set of all acyclic transitive digraphs.

Since every oriented co-graph is transitive and acyclic, Corollary
\ref{cor-tt-av} leads to the following bound.

\begin{corollary}\label{th-degxxco2}
Let $G$ be an oriented co-digraph. Then, it holds that $\chi_o(G)\leq \Delta(G)+1$.
\end{corollary}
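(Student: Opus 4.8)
The plan is to derive the bound from the more general result already proved for transitive acyclic digraphs. Specifically, I would appeal to Corollary~\ref{cor-tt-av}, which states that $\chi_o(G)\leq \Delta(G)+1$ whenever $G$ is transitive and acyclic. The entire task then collapses to checking that every oriented co-graph is indeed transitive and acyclic, after which the claimed inequality follows by direct substitution.

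To establish membership in the transitive acyclic class, I would argue by structural induction along the recursive definition in Definition~\ref{dcog}. The base case is a single vertex $(\{v\},\emptyset)$, which is vacuously transitive and acyclic. For the inductive step, assume $G_1=(V_1,E_1)$ and $G_2=(V_2,E_2)$ are transitive and acyclic. For the disjoint union $G_1\oplus G_2$ no arcs are added between the two parts, so any directed cycle or any failure of transitivity would have to stay within a single factor, contradicting the induction hypothesis. For the order composition $G_1\oslash G_2$ all newly inserted arcs point from $V_1$ into $V_2$; acyclicity is preserved because no directed path can ever return from $V_2$ back to $V_1$, so a cycle would again be confined to one factor. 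Transitivity is checked by a brief case analysis on a pair of consecutive arcs $(u,v),(v,w)$ with $u\neq w$: if $u,v,w$ lie in a common factor the hypothesis applies, while in the remaining cases one always has $u\in V_1$ and $w\in V_2$, so the required arc $(u,w)$ is one of the arcs added by the order composition.

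With both properties in hand, the corollary is immediate. I expect the only content-bearing point to be the transitivity of the order composition, precisely the structural fact the paper invokes with the phrase ``Since every oriented co-graph is transitive and acyclic''; everything else is a one-line reduction to Corollary~\ref{cor-tt-av}.
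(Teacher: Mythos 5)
Your proposal is correct and follows exactly the paper's route: the paper derives the corollary in one line from Corollary~\ref{cor-tt-av} via the fact that every oriented co-graph is transitive and acyclic (a fact it states without proof, consistent with the characterization $\OC\subseteq\free\{\overleftrightarrow{P_2},\overrightarrow{P_3},\overrightarrow{C_3}\}$). Your structural induction merely writes out that implicit fact explicitly, and your case analysis for the order composition is sound.
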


There are classes of oriented co-graphs, e.g., the class of all
$\overrightarrow{K_{1,n}}$, for which the  oriented
chromatic number is even bounded by a constant and thus smaller than the
shown bound. The set of all
transitive tournaments shows that the
bound given in Corollary \ref{th-degxxco2} is best possible.

\section{Coloring msp-digraphs}

We recall the definitions from \cite{BG18} which are
based on \cite{VTL82}. First, we introduce two operations for two 
vertex-disjoint digraphs $G_1=(V_1,E_1)$ and $G_2=(V_2,E_2)$.
Let $O_1$ be the set of vertices of outdegree $0$ (set of sinks) in $G_1$ and
$I_2$ be the set of vertices of indegree $0$ (set of sources) in $G_2$.

\begin{itemize}
\item
The {\em parallel composition} of $G_1$ and $G_2$,
denoted by $G_1 \cup G_2$, is the digraph with vertex set $V_1\cup V_2$
and arc set $E_1\cup E_2$.

\item
The {\em  series composition}  of $G_1$ and $G_2$,
denoted by $G_1 \times G_2$ is the digraph with vertex set $V_1\cup V_2$
and arc set $E_1\cup E_2\cup  \{(v,w)\mid v\in O_1, w\in I_2\}$.
\end{itemize}

\begin{definition}[Msp-digraphs]\label{def-msp}
The class of {\em minimal series-parallel digraphs}, {\em msp-di\-graphs} for
short, is recursively defined as follows.
\begin{enumerate}
\item Every digraph on a single vertex $(\{v\},\emptyset)$,
denoted by $v$, is a {\em minimal series-parallel digraph}.

\item If  $G_1$ and $G_2$
are vertex-disjoint minimal series-parallel digraphs then,
\begin{enumerate}
\item the parallel composition
$G_1 \cup G_2$ and

\item  then series composition
$G_1 \times G_2$
are  {\em minimal series-parallel digraphs}.
\end{enumerate}
\end{enumerate}
The class of minimal series-parallel digraphs is denoted as $\MSP$.
\end{definition}

Minimal (vertex) series-parallel digraphs are the line digraphs of 
edge series-parallel digraphs  \cite{VTL82}, which are an oriented version of
the well known class of series-parallel graphs.

Every expression $X$ using  the operations of Definition \ref{def-msp}
is called an {\em msp-expression}.
The digraph defined by the expression $X$ is denoted by $\g(X)$.
We  illustrate this by two expressions which we will refer to later.

\begin{example}\label{ex-msp}
The following msp-expressions $X_1$ and $X_2$ define msp-digraphs on five and six vertices
shown in Figure~\ref{F06} and Figure~\ref{F03y}.
$$
X_1=  (v_1\times((v_2\times v_3)\cup v_4))\times v_5
$$
$$
X_2=  (v_1\times(((v_2\times v_3)\times v_4)\cup v_5))\times v_6
$$
\end{example}

\begin{figure}[hbtp]
\centering
\parbox[b]{67mm}{
\centerline{\includegraphics[width=0.4\textwidth]{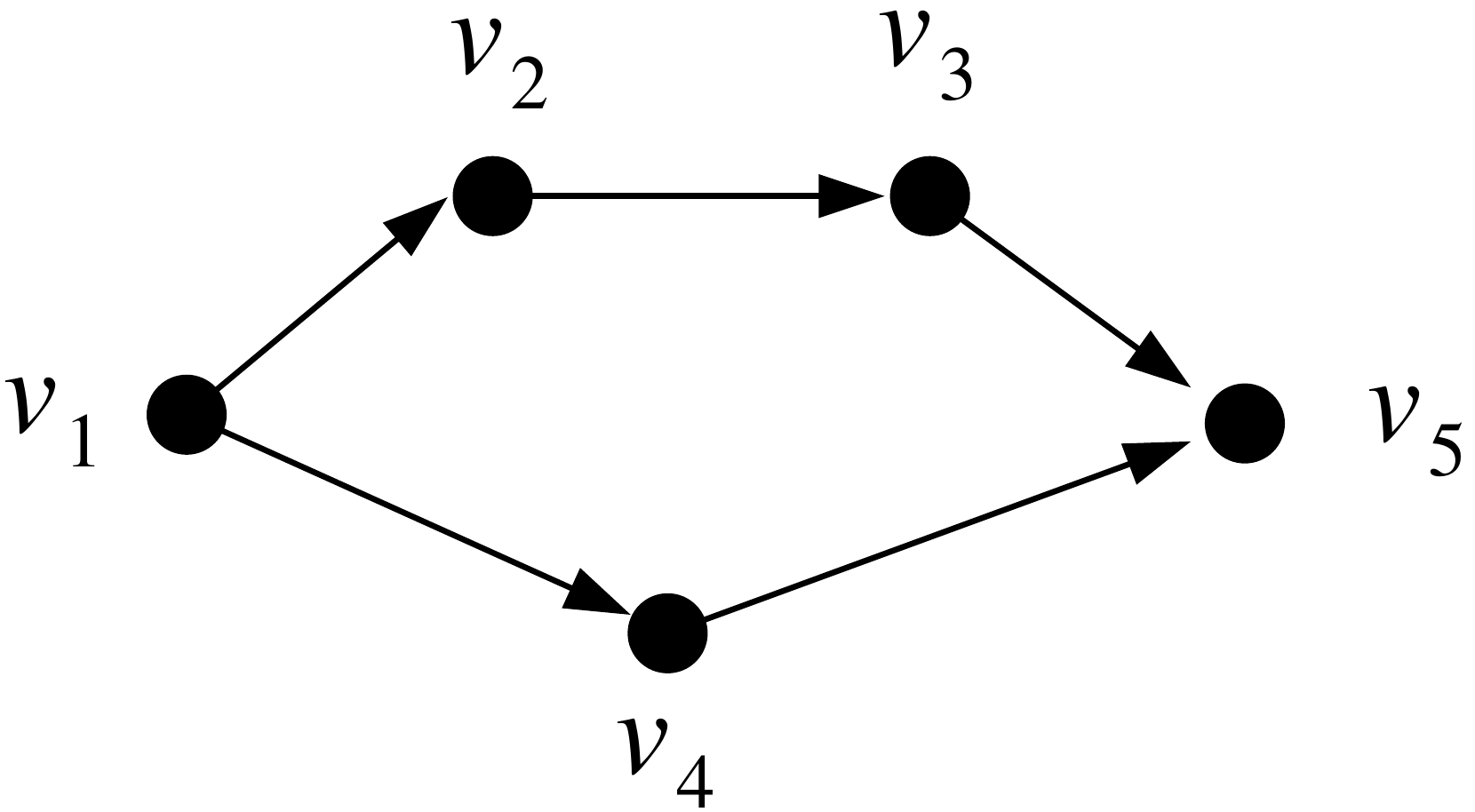}}
\caption{$\gr(X_1)$ in Example \ref{ex-msp}.
}
\label{F06}}
~~~~~~~~~
\parbox[b]{67mm}{
\centerline{\includegraphics[width=0.5\textwidth]{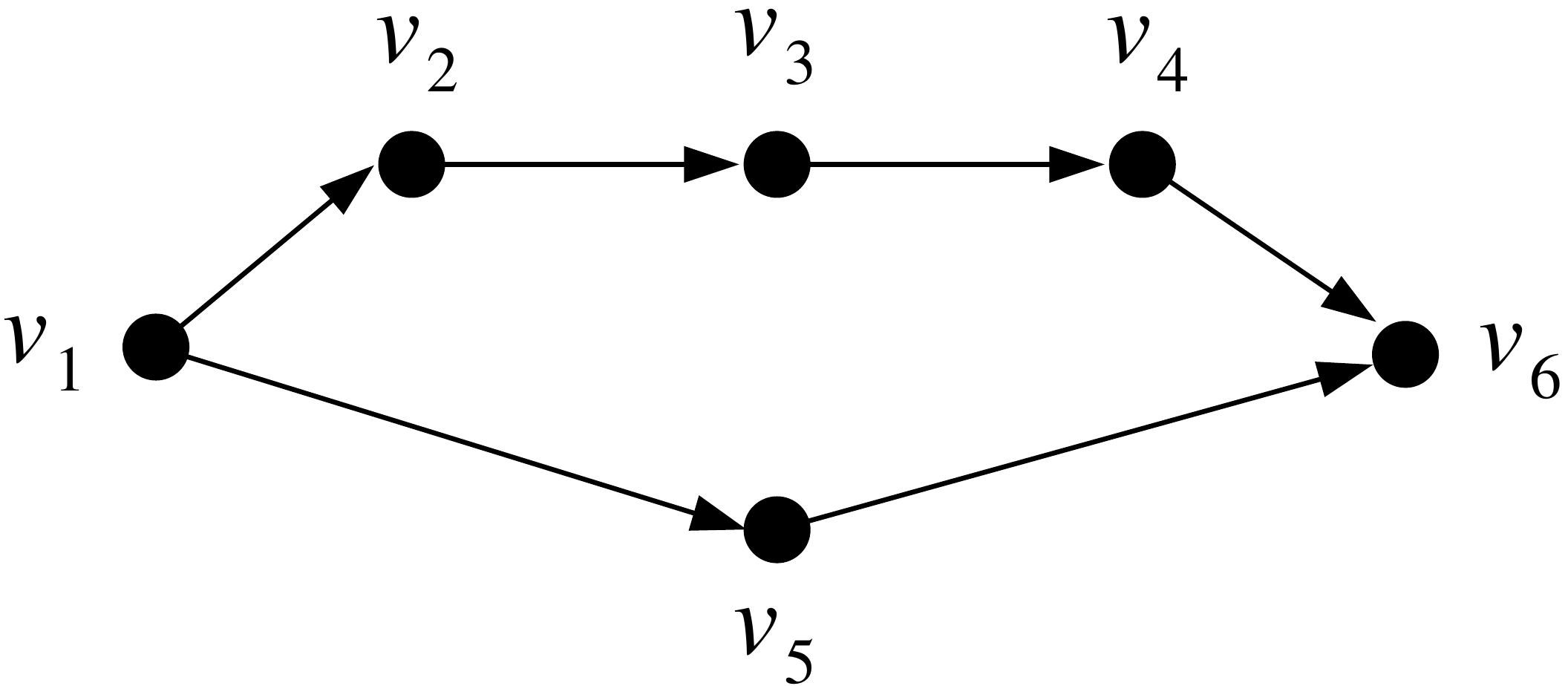}}
\caption{$\gr(X_2)$ in Example \ref{ex-msp}.}
\label{F03y}}
\end{figure}

By removing vertex $v_3$ from  $\g(X_2)$ in Example \ref{ex-msp},
we obtain an oriented graph which is no msp-digraph. This implies
that the set of all msp-digraphs is not closed under taking induced subdigraphs.

A further remarkable property of msp-digraphs is that
the oriented chromatic number of the disjoint union of two msp-digraphs can
be larger than the maximum oriented chromatic number of the involved digraphs.
This follows by the digraphs defined by expressions $X_1$ and $X_2$ in Example \ref{ex-six2}, which both
have oriented chromatic number $4$ but their disjoint union leads to a digraph with
oriented chromatic number $5$.

\begin{example}\label{ex-six2} In the following two msp-expressions we assume that
the  series composition $\times$ binds more strongly than the parallel composition $\cup$.
$$X_1=v_{1} \times (v_2 \cup v_3 \times v_4) \times v_5 \times v_6$$
$$X_2=w_{1} \times (w_2 \cup w_3 \times (w_{4} \cup w_{5} \times w_{6} ))\times w_{7}$$
\end{example}

Several classes of digraphs are included in the set of all msp-digraphs.

\begin{proposition} 
Every  in- and out-rooted tree is an msp-digraph.
\end{proposition}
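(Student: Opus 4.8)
The plan is to proceed by structural induction on the rooted tree, decomposing it at the root using a single series composition applied to a parallel composition of the subtrees hanging below the root. The base case in every argument is the single vertex, which is an msp-digraph by Definition \ref{def-msp}.

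Consider first an out-rooted-tree $T$ with root $r$, and let $c_1,\ldots,c_k$ be the children of $r$. Deleting $r$ splits $T$ into vertex-disjoint out-rooted-trees $T_1,\ldots,T_k$, where $T_i$ has root $c_i$. By the induction hypothesis each $T_i$ is an msp-digraph, hence so is the parallel composition $T_1 \cup \cdots \cup T_k$. I would then claim that
$$T = r \times (T_1 \cup \cdots \cup T_k),$$
which exhibits $T$ as a series composition of two msp-digraphs and finishes the inductive step. To justify the claim I would use the observation that in an out-rooted-tree every vertex other than the root has indegree $1$ (the arc from its parent), while the root has indegree $0$; thus the root is the unique source. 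Consequently the set of sources of $T_1 \cup \cdots \cup T_k$ is exactly $\{c_1,\ldots,c_k\}$, and the single vertex $r$ is the unique sink of the left operand. The series composition therefore adds precisely the arcs $(r,c_1),\ldots,(r,c_k)$ on top of the arcs already present inside the $T_i$, which is exactly the arc set of $T$.

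For an in-rooted-tree the argument is dual: in such a tree every non-root vertex has outdegree $1$ (the arc toward its parent) and the root has outdegree $0$, so the root is the unique sink. Writing $T$ with root $r$ and child subtrees $T_1,\ldots,T_k$, the same reasoning yields
$$T = (T_1 \cup \cdots \cup T_k) \times r,$$
since the sinks of the parallel composition are exactly the roots $c_1,\ldots,c_k$ of the $T_i$ and $r$ is the unique source of the right operand. The delicate point, and the step I would write out most carefully, is the verification that the series composition introduces exactly the arcs between $r$ and the $c_i$ and no spurious ones; this rests entirely on the source/sink observation above, guaranteeing that the sources (respectively sinks) of the parallel composition coincide with the subtree roots and with nothing else.
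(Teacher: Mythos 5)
Your proof is correct, and it is the canonical argument: the paper states this proposition without giving any proof, so there is nothing to diverge from. Your induction on the number of vertices, peeling off the root $r$ via $T = r \times (T_1 \cup \cdots \cup T_k)$ for out-rooted trees and $T = (T_1 \cup \cdots \cup T_k) \times r$ for in-rooted trees, is exactly the argument the authors presumably had in mind, and you correctly identify and verify the one point that needs care -- that the sources (resp.\ sinks) of the parallel composition are precisely the subtree roots $c_1,\ldots,c_k$, while the isolated vertex $r$ is simultaneously a source and a sink, so the series composition inserts exactly the root--child arcs and nothing else.
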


\begin{proposition} 
Every oriented bipartite graph $\overrightarrow{K_{n,m}}$ is an msp-digraph.
\end{proposition}

For every  msp-digraph we can define a tree structure $T$, which is
denoted as {\em msp-tree}. (In \cite{VTL82}, the tree-structure for an msp-digraphs
is denoted as binary decomposition tree.)
The leaves of an msp-tree represent the
vertices of the digraph and the inner vertices of the msp-tree  correspond
to the operations applied on the subexpressions defined by the subtrees.
For every  msp-digraph one can construct an msp-tree in linear time,
see \cite{VTL82}.

Next we want to give an algorithm to compute the oriented chromatic
number of an msp-digraph. Considering the solutions of Sections \ref{sec-ta} and \ref{sec-cog}
we conclude that 
a greedy coloring of $\un(G)$ along a topological ordering of $G$ does not
work for computing the oriented chromatic number of an msp-digraph $G$. 
An oriented path would be
colored by only two colors which is not an admitted oriented coloring.
Further, a dynamic programming solution using similar formulas to 
Lemma \ref{le-co-gr} is not possible for computing the oriented chromatic number of msp-digraphs.
Example \ref{ex-six2} implies that the oriented chromatic number of the disjoint union of two msp-digraphs can
be larger than the maximum oriented chromatic number of the involved digraphs.

In order to give an algorithm to compute the oriented chromatic
number of msp-digraphs, we first show that this value can be bounded by
a constant.

The  class of undirected series-parallel  graphs was considered in  \cite{Sop97}
by showing that every orientation of a series-parallel  graph has oriented
chromatic number at most 7. This bound can not be applied to  msp-digraphs,
since  the set of all $\overrightarrow{K_{n,m}}$  is a subset of msp-digraphs
and the underlying graphs are even of unbounded tree-width \cite{Bod98} and thus, no series-parallel  graphs.

Nevertheless, we can show that $7$ is also an upper bound for the oriented chromatic
number of msp-digraphs. Therefore we introduce recursively defined oriented
graphs $M_i$.

\begin{lemma}\label{le-mi}
We recursively define oriented digraphs $M_i$ as follows. 
$M_0$ is a single vertex graph and for $i\geq 1$
we define
$$M_i=M_{i-1}\cup M_{i-1}\cup (M_{i-1} \times M_{i-1}).$$
Then, every msp-digraph $G$ is a (-n induced) subdigraph of some $M_i$ such that every source in $G$
is a source in $M_i$ and every sink in $G$
is a sink in $M_i$.
\end{lemma}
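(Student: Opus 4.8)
The plan is to prove the claim by structural induction on the msp-expression defining $G$, following the recursive definition of both the msp-digraphs and the graphs $M_i$. First I would set up the induction so that the inductive hypothesis carries not just the embedding $G \hookrightarrow M_i$ as an induced subdigraph, but also the two boundary conditions: every source of $G$ maps to a source of $M_i$ and every sink of $G$ maps to a sink of $M_i$. These boundary conditions are exactly what will make the series composition go through, so they must be part of the statement being inducted on. The base case is immediate: a single-vertex msp-digraph is $M_0$ itself, and its unique vertex is simultaneously the (only) source and sink in both graphs.

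For the inductive step, suppose $G = G_1 \cup G_2$ (parallel composition) or $G = G_1 \times G_2$ (series composition), where by induction $G_1$ embeds into some $M_{i_1}$ and $G_2$ into some $M_{i_2}$, each respecting sources and sinks. First I would let $i = \max(i_1, i_2)$ and observe that $M_j$ embeds as an induced subdigraph into $M_{j+1}$ respecting sources and sinks — this is a small auxiliary monotonicity fact I would record, since it lets me promote both $G_1$ and $G_2$ into a common $M_i$. The definition $M_i = M_{i-1} \cup M_{i-1} \cup (M_{i-1} \times M_{i-1})$ is tailored precisely for this: it contains two ''free'' parallel copies of $M_{i-1}$ (into which I place $G_1$ and $G_2$ for the parallel case) and one $M_{i-1} \times M_{i-1}$ factor (into which I place $G_1$ and $G_2$ for the series case). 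For the parallel composition, I would embed $G_1$ and $G_2$ into the first two copies of $M_{i}$ (after promoting each to $M_i$), noting that the disjoint union adds no arcs, so the embedding stays induced and sources/sinks are preserved. For the series composition, I would embed $G_1$ into the left factor $M_i$ and $G_2$ into the right factor $M_i$ of the third summand $M_i \times M_i$ inside $M_{i+1}$; the series composition of $M_i \times M_i$ adds exactly the arcs from sinks of the left copy to sources of the right copy, and because $G_1$'s sinks land on sinks of the left $M_i$ and $G_2$'s sources land on sources of the right $M_i$, precisely the arcs of $G_1 \times G_2$ are realized, and no extra arcs among the embedded vertices appear (so the embedding remains induced).

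The main obstacle I anticipate is verifying the induced condition together with the source/sink tracking in the series case — specifically, confirming that the new arcs introduced by $M_i \times M_i$ restrict exactly to the arcs $\{(v,w) : v \in O_{G_1}, w \in I_{G_2}\}$ that define $G_1 \times G_2$, and no more. This requires knowing that a vertex of $G_1$ is a sink of $G_1$ if and only if its image is a sink of the left $M_i$ (within the relevant copy), which is where the strengthened inductive hypothesis pays off; one must also check that vertices of $G_1$ that are \emph{not} sinks of $G_1$ do not accidentally become sinks of $M_i$ and thereby acquire spurious arcs, and symmetrically for sources of $G_2$. I would also need to confirm that after the composition the sources of $G = G_1 \times G_2$ (which are the sources of $G_1$) remain sources of $M_{i+1}$ and the sinks of $G$ (the sinks of $G_2$) remain sinks of $M_{i+1}$, so that the inductive invariant is restored for the next step; this follows from the structure of the series composition but should be stated explicitly to close the induction cleanly.
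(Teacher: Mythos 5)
Your proposal is correct and follows essentially the same route as the paper's proof: induction on the msp-decomposition $G = G_1 \mathbin{\Box} G_2$ with the strengthened source/sink-preserving invariant, the monotonicity fact that $M_{i_1}$ embeds in $M_{i_2}$ for $i_1 \leq i_2$ (which the paper also uses, taking w.l.o.g.\ $i_1 \leq i_2$ and promoting into $M_{i_2+1} = M_{i_2} \cup M_{i_2} \cup (M_{i_2} \times M_{i_2})$), and the same placement of the two operands into that structure. The one check you flag as open --- that non-sinks of $G_1$ cannot map to sinks of $M_i$ --- closes in one line (an out-arc of $G_1$ is an arc of $M_i$), and in fact the paper only ever uses the plain subdigraph relation, which suffices for its application via Lemma \ref{le-col-subdigraph}.
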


\begin{proof}
The lemma can be shown by induction over the number of vertices in some msp-digraph $G$.
If $G$ has exactly one vertex, the claim holds true by choosing $M_0$.

Next, assume that $G$ has $k>1$ vertices. Then, it holds that $G=G_1 \Box G_2$ for some
$\Box\in\{\cup,\times\}$ and $G_1$ and $G_2$ are msp-digraphs with less than $k$ vertices.
By the induction hypothesis we conclude that there are two integers $i_1$ and $i_2$ such that
digraph $G_1 \Box G_2$ is a subdigraph of digraph $M_{i_1} \Box M_{i_2}$. (If $\Box=\times$, it is important
that every sink in $G_1$
is a  sink in $M_{i_1}$ and every source in $G_2$
is a source in $M_{i_2}$.)
Thus, $G$ is a subdigraph of digraph $M_{i_1}\cup M_{i_2}\cup (M_{i_1} \times M_{i_2})$. W.l.o.g. we
assume that $i_1\leq i_2$. By construction it follows that $M_{i_1}$ is a
subdigraph of $M_{i_2}$. Consequently, $G$ is a
subdigraph of digraph $M_{i_2}\cup M_{i_2}\cup (M_{i_2} \times M_{i_2})=M_{i_2+1}$ and every source in $G$
is a source in $M_{i_2+1}$ and every sink in $G$
is a sink in $M_{i_2+1}$.
This completes the proof of the claim.
\end{proof}

\begin{theorem}\label{algspd}
Let $G$ be an msp-digraph. Then, it holds that $\chi_o(G)\leq 7$.
\end{theorem}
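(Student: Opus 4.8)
The plan is to reduce the statement to the auxiliary graphs $M_i$ and then exhibit a single homomorphism into one fixed $7$-vertex tournament. By Lemma~\ref{le-mi} every msp-digraph is a subdigraph of some $M_i$, so Corollary~\ref{le-isubdigraph} reduces the claim to proving $\chi_o(M_i)\le 7$ for all $i$. By Observation~\ref{oc-tournament} it then suffices to construct, for every $i$, a homomorphism from $M_i$ into a single tournament on $7$ vertices. I would take the Paley (quadratic-residue) tournament $Q$ on vertex set $\mathbb{Z}_7$ with $i\to j$ exactly when $j-i\in\{1,2,4\}$, and exploit that the shifts $x\mapsto x+a$ and the maps $x\mapsto qx$ with $q\in\{1,2,4\}$ are automorphisms of $Q$.

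The heart of the argument is a strengthened induction hypothesis that survives both composition operations. I would prove, by induction on $i$, that $M_i$ admits a homomorphism $h_i$ to $Q$ in which every sink receives colour $0$ and every source receives a colour in $\{0,1,3\}$. The base case $M_0$ is immediate: colour its unique vertex, which is simultaneously a source and a sink, with $0$.

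For the inductive step, write $M_i=A\cup B\cup(C\times D)$ with $A,B,C,D$ copies of $M_{i-1}$. I would colour $A$ and $B$ by $h_{i-1}$, colour $C$ by $\gamma\circ h_{i-1}$ with $\gamma(x)=4x+3$, and colour $D$ by $\delta\circ h_{i-1}$ with $\delta(x)=4x$; both $\gamma$ and $\delta$ are automorphisms of $Q$ since $4\in\{1,2,4\}$. The only inter-copy arcs are the series arcs from the sinks of $C$ to the sources of $D$, so validity reduces to one domination check: under $\gamma$ every sink of $C$ becomes colour $\gamma(0)=3$, under $\delta$ every source of $D$ lands in $\delta(\{0,1,3\})=\{0,4,5\}$, and the arcs $3\to 0,\ 3\to 4,\ 3\to 5$ all hold in $Q$. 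Finally $\gamma$ fixes the source palette, $\gamma(\{0,1,3\})=\{0,1,3\}$, and $\delta$ fixes the sink palette, $\delta(0)=0$; hence the sources of $M_i$ (those of $A,B,C$) stay in $\{0,1,3\}$ and the sinks of $M_i$ (those of $A,B,D$) stay equal to $0$, so the hypothesis is restored and the induction closes. Since a homomorphism into a tournament is an oriented colouring (Observation~\ref{oc-tournament}), this gives $\chi_o(M_i)\le 7$ and therefore $\chi_o(G)\le 7$ for every msp-digraph $G$.

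The step I expect to be genuinely delicate is choosing the invariant, not verifying it. Forcing all sources to one colour and all sinks to one colour fails twice: a single arc of $M_1$ would then be monochromatic, and, more fundamentally, the parallel copies always create vertices that are at once a source and a sink, which forces the source palette and the sink palette to intersect---directly at odds with the domination of $D$'s sources by $C$'s sinks that the series composition demands. The idea that breaks the deadlock is asymmetry together with recolouring \emph{both} series factors: keep the sink palette a single colour but enlarge the source palette to a three-element set, and apply automorphisms $\gamma,\delta$ of $Q$ so that the dominating colour $3$ of $C$'s sinks is pushed off the shared colour while the sources of $D$ are rotated into its out-neighbourhood $\{0,4,5\}$. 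Once the pair $(\{0,1,3\},\{0\})$ and the automorphisms are found, everything that remains is the routine arithmetic in $\mathbb{Z}_7$ indicated above.
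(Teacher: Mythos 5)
Your proof is correct and takes essentially the same route as the paper: reduce to the recursively defined graphs $M_i$ via Lemma~\ref{le-mi}, then inductively construct a homomorphism into the Paley tournament on $\mathbb{Z}_7$, recolouring the two series factors by affine automorphisms $x\mapsto 4x+c$ while maintaining an invariant that pins one side (sources/sinks) to a single colour and confines the other to a three-element palette closed under the recolouring. The only difference is a mirror image of the paper's constants: the paper fixes all sources at $0$ and lets sinks range over $\{0,1,5\}$ using $p(x)=4x$ and $q(x)=4x+1$, whereas you fix all sinks at $0$ and let sources range over $\{0,1,3\}$ using $\gamma(x)=4x+3$ and $\delta(x)=4x$.
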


\begin{proof}
By Lemma \ref{le-col-subdigraph} we can show the theorem by coloring the 
digraphs $M_i$ defined in Lemma \ref{le-mi}.
Further, the first two  occurrences of $M_{i-1}$ in $M_i$ can be colored
in the same way. Thus, we can restrict to
oriented graphs $M'_i$ which are defined
as follows. $M'_0$ is a single vertex graph and for $i\geq 1$
we define
$$M'_i=M'_{i-1}\cup (M'_{i-1} \times M'_{i-1}).$$

We define an oriented $7$-coloring $c$ for $M'_i$ as follows.
For some  vertex $v$ of  $M'_i$ we define by $c(v,i)$ the color of $v$ in  $M'_i$.
First, we color $M'_0$  by assigning color $0$ to the single
vertex in $M'_0$.\footnote{Please note that using colors starting at value $0$
instead of $1$ does not contradict Definition \ref{def-oc}.} For $i\geq 1$
we define the colors for the vertices $v$ in $M'_i=M'_{i-1}\cup (M'_{i-1} \times M'_{i-1})$
according to the three copies of $M'_{i-1}$ in $M'_i$ (numbered from left to right).
Therefore, we use the two functions $p(x)=(4\cdot x) \bmod 7$ and $q(x)=(4\cdot x + 1) \bmod 7$.
We define
\[
c(v,i)= \left\{\renewcommand{\arraystretch}{1.3}\begin{array}{ll}
c(v,i-1)    & \text{if } v  \text{ is from the first copy,}\\
p(c(v,i-1)) & \text{if } v  \text{ is from the second copy, and}\\
q(c(v,i-1)) & \text{if } v  \text{ is from the third copy.}\\
\end{array}\right.~\]

It remains to show that $c$ leads to an oriented coloring for $M_i$. Let $C_i=(W_i,F_i)$
with $W_i=\{0,1,2,3,4,5,6\}$ and $F_i=\{(c(u,i),c(v,i))\mid  (u,v)\in E_i\}$ be the color graph of $M'_i=(V_i,E_i)$.
By the definition of $M'_i$ we follow that
\[
\begin{array}{lclllll}
F_{i}&=&F_{i-1}&\cup &\{(p(x),p(y)) \mid (x,y)\in F_{i-1}\} \\
       & &    &\cup & \{(q(x),q(y)) \mid (x,y)\in F_{i-1}\} \\
       & &    &\cup & \{(p(c(v,i-1)),q(c(w,i-1))) \mid  v \text{ sink of } M'_{i-1}, w \text{ source of } M'_{i-1} \}.   \\

\end{array}
\]
In order to ensure an oriented coloring of  $M'_i$, we verify
that $C_i$ is an oriented graph. In Figure \ref{F09} the color graph
$C_i$ for $i\geq 5$ is given.

\begin{figure}[hbtp]
\centering
\centerline{\includegraphics[width=0.3\textwidth]{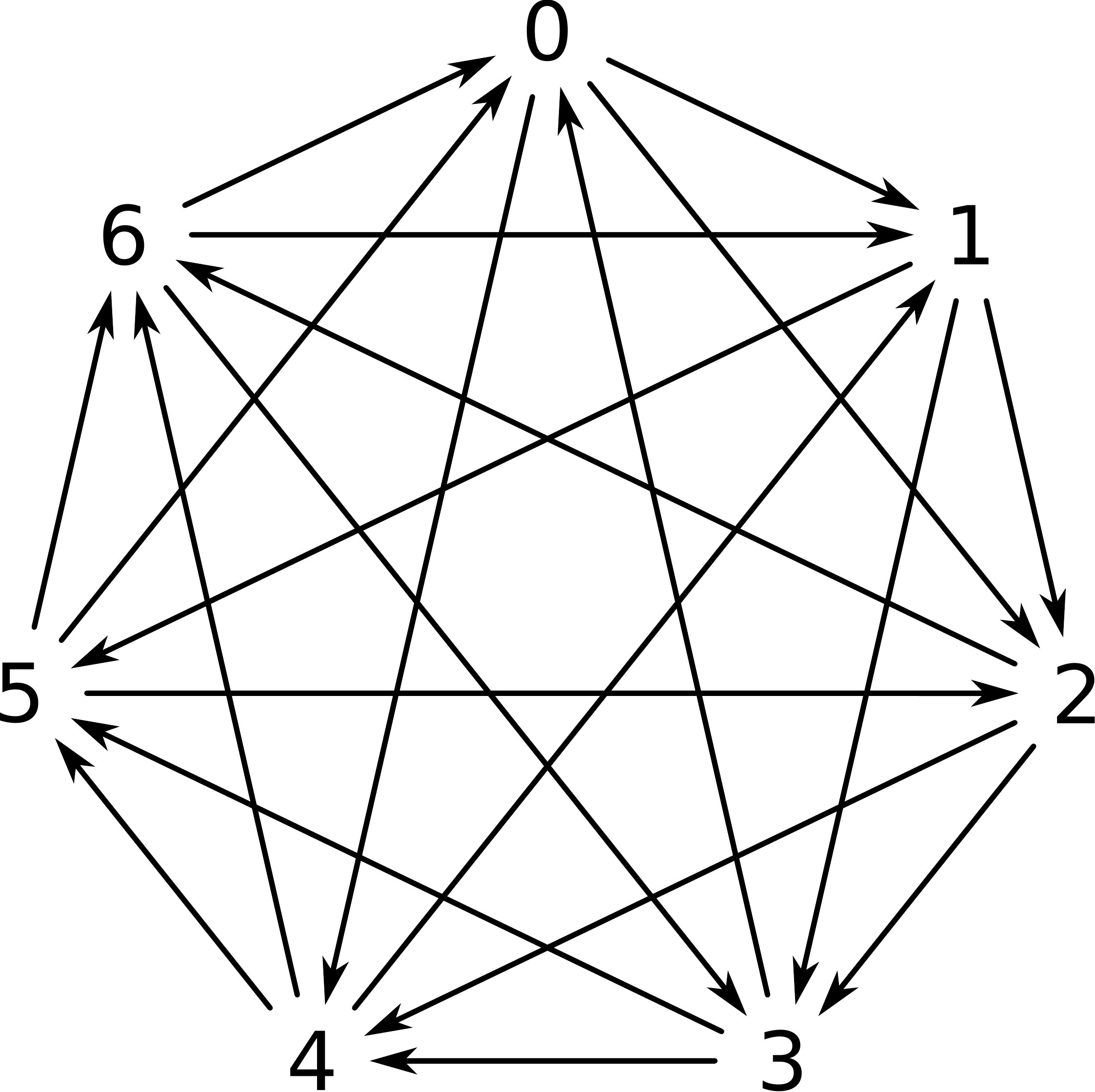}}
\caption{Color graph $C_i$ for $i\geq 5$ used in the proof of Theorem  \ref{algspd}, which 
is also known as the Paley tournament of order 7.}
\label{F09}
\end{figure}

Every source in $M'_i$ is colored by $0$ since $p(0)=0$.
Every sink in $M'_i$ is colored by $0$, $1$, or $5$ since  $q(0)=1$,
$q(1)=5$, and  $q(5)=0$.

Consequently, the arcs of $$\{(p(c(v,i-1)),q(c(w,i-1))) \mid  v \text{ sink of } M'_{i-1}, w \text{ source of } M'_{i-1}\}$$
belong to the set $$\{(p(0),q(0)),(p(1),q(0)),(p(5),q(0))\}=\{(0,1),(4,1),(6,1)\}.$$
For every $(u,v)\in\{(0,1),(4,1),(6,1)\}$ we know that
\begin{equation}
(v-u) \bmod 7 \in\{1,2,4\} \label{mod7}
\end{equation}
which implies $(u-v) \bmod 7 \not\in\{1,2,4\}$ and thus, condition (\ref{mod7}) does not hold
for the reverse arcs of $\{(0,1),(4,1),(6,1)\}$. It remains to show that
 (\ref{mod7}) remains true for all arcs $(u,v)$ when applying $p$ and $q$ to $M'_{i-1}$:
\[
\begin{array}{lclllll}
(q(v)-q(u)) \bmod 7&=& (((4\cdot v +1) \bmod 7) - ((4\cdot u +1) \bmod 7)) \bmod 7 \\
                  & = & (((4\cdot v)  \bmod 7) - ((4\cdot u)  \bmod 7)) \bmod 7\\
            & = & (p(v)-p(u)) \bmod 7\\
            & = &(4 (v-u))  \bmod 7
\end{array}
\]
Since $(v-u) \bmod 7 \in\{1,2,4\}$ leads to  $(4(v-u)) \bmod 7 \in\{1,2,4\}$, the result
follows. 
\end{proof}

Digraph $G$ on 27 vertices defined by expression $X$ in Example \ref{ex-six} satisfies $\chi_o(G)=7$, which was
found by a computer program.\footnote{We implemented
an algorithm which takes an oriented graph $G$ and an integer $k$ as an input and which decides
whether $\chi_o(G)\leq k$.} This implies
that the bound of Theorem \ref{algspd} is best possible.

\begin{example}\label{ex-six} In the following msp-expression we assume that
the  series composition $\times$ binds more strongly than the parallel composition $\cup$.
$$
\begin{array}{c}
X=v_{1} \times (v_2 \cup v_3 \times (v_4 \cup v_5 \times v_6))\times (v_7 \cup (v_8 \cup v_9 \times v_{10})\times (v_{11} \cup v_{12} \times v_{13}))\times \\
(v_{14}\cup (v_{15} \cup (v_{16} \cup v_{17} \times v_{18})\times (v_{19} \cup v_{20} \times v_{21}))\times (v_{22} \cup(v_{23} \cup v_{24} \times v_{25})\times v_{26}) ) \times v_{27}
\end{array}
$$
\end{example}


In oder to compute the  oriented
chromatic number of an msp-digraph $G$ defined by an msp-expression $X$,
we recursively compute the set $F(X)$ of all triples $(H,L,R)$ such that
$H$ is a color graph for $G$, where $L$ and $R$ are the sets of
colors of all sinks and all sources in $G$ with respect
to the coloring by $H$.
 The number of vertex labeled, i.e., the vertices are distinguishable
from each other, oriented graphs  on $n$
vertices is  $3^{\nicefrac{n(n-1)}{2}}$.
By Theorem \ref{algspd} we can conclude that
$$|F(X)|\leq 3^{\nicefrac{7(7-1)}{2}}\cdot 2^7 \cdot 2^7\in \bigo(1)$$
which is independent of the size of $G$.

For two color graphs $H_1=(V_1,E_1)$ and  $H_2=(V_2,E_2)$
we define $H_1+H_2=(V_1\cup V_2,E_1\cup E_2)$.

\begin{lemma}\label{le1}
\begin{enumerate}
\item\label{le1-1} 
For every $v\in V$ it holds  $F(v)=\{((\{i\},\emptyset ),\{i\},\{i\}) \mid 0\leq i \leq 6\}$.

\item\label{le1-2}  For every two msp-expressions $X_1$ and  $X_2$ we obtain set $F(X_1\cup X_2)$
from sets $F(X_1)$ and $F(X_2)$ as follows. For every $(H_1,L_1,R_1) \in F(X_1)$ and
every $(H_2,L_2,R_2) \in F(X_2)$ such that graph $H_1+H_2$ is oriented, we
put $(H_1 + H_2,L_1\cup L_2,R_1\cup R_2)$ into $F(X_1\cup X_2)$.

\item\label{le1-3}  For every two msp-expressions $X_1$ and  $X_2$ we obtain set $F(X_1\times X_2)$
from sets $F(X_1)$ and $F(X_2)$ as follows. For every $(H_1,L_1,R_1) \in F(X_1)$ and
every $(H_2,L_2,R_2) \in F(X_2)$ such that graph $H_1+H_2$ together with
the arcs in $R_1\times L_2$ is oriented, we
put $((V_1\cup V_2,E_1\cup E_2\cup R_1\times L_2),L_1,R_2)$  into $F(X_1\times X_2)$.
\end{enumerate}
\end{lemma}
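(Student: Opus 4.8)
The plan is to prove all three parts simultaneously by structural induction on the msp-expression $X$, using throughout the basic equivalence that a mapping $c$ is an oriented coloring of a digraph $G$ if and only if its \emph{color graph}---the homomorphic image whose vertices are the colors used and whose arcs are $\{(c(u),c(v))\mid (u,v)\in E\}$---is itself an oriented graph. The first condition of Definition~\ref{def-oc} says precisely that this image has no loop, and the second condition says exactly that it has no pair of opposite arcs; hence validity of $c$ is equivalent to ``$H$ is oriented.'' This equivalence is what lets me replace reasoning about colorings by reasoning about the finite objects $H$. Throughout I write $L$ for the set of colors of the sources and $R$ for the set of colors of the sinks of $\g(X)$; this is the convention under which the arc set $R_1\times L_2$ in part~\ref{le1-3} reproduces the series arcs, which run from the sinks of $G_1$ to the sources of $G_2$. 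Because Theorem~\ref{algspd} guarantees $\chi_o(G)\le 7$, it suffices to track color graphs on vertex subsets of $\{0,\dots,6\}$, which is exactly what the base case in part~\ref{le1-1} encodes.

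For the base case, a single vertex carries no arcs, so any of the seven colors yields the color graph $(\{i\},\emptyset)$, and that lone vertex is simultaneously the unique source and the unique sink; this gives part~\ref{le1-1} directly. For the parallel composition I would argue both inclusions. Given compatible triples from $F(X_1)$ and $F(X_2)$, the disjoint juxtaposition $c=c_1\sqcup c_2$ of witnessing colorings is a coloring of $G_1\cup G_2$ whose color graph is exactly $H_1+H_2$, since the color-arcs split according to whether they came from $E_1$ or $E_2$; it is valid precisely when $H_1+H_2$ is oriented. Because $G_1\cup G_2$ adds no arcs, its sources and sinks are the disjoint unions of those of $G_1$ and $G_2$, so the color sets are $L_1\cup L_2$ and $R_1\cup R_2$. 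Conversely, any oriented coloring of $G_1\cup G_2$ restricts, by Lemma~\ref{le-col-subdigraph}, to oriented colorings of $G_1$ and $G_2$, and reassembling their triples via the rule returns the original one; this gives equality of the two sets.

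The series composition is the step that must be handled with care, and I expect it to be the main obstacle. The extra arcs of $G_1\times G_2$ run from every sink of $G_1$ to every source of $G_2$, so in the color graph they contribute exactly the arcs $R_1\times L_2$: every such pair is realized, because each color in $R_1$ is carried by some sink and each color in $L_2$ by some source, and no further new color-arc can appear. Hence the color graph of $c_1\sqcup c_2$ is $(V_1\cup V_2,E_1\cup E_2\cup R_1\times L_2)$, and $c$ is valid iff this graph is oriented, matching the condition in part~\ref{le1-3}. The delicate point is the bookkeeping of sources and sinks: since every msp-digraph is nonempty, $G_2$ has at least one source and $G_1$ at least one sink, so after composition every former sink of $G_1$ acquires an out-arc and every former source of $G_2$ acquires an in-arc. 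Thus the sources of $G_1\times G_2$ are exactly the sources of $G_1$ (color set $L_1$) and the sinks are exactly the sinks of $G_2$ (color set $R_2$), yielding the output triple $((V_1\cup V_2,E_1\cup E_2\cup R_1\times L_2),L_1,R_2)$. The reverse inclusion again follows by restricting a coloring of $G_1\times G_2$ to its two parts and invoking the induction hypothesis, which completes the proof.
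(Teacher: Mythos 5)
Your proof is correct and follows essentially the same route as the paper: structural induction over the msp-expression, with the forward direction arguing that the combined coloring's color graph is exactly $H_1+H_2$ (plus the arcs $R_1\times L_2$ for the series case, each pair being realized since every color in $R_1$ sits on a sink and every color in $L_2$ on a source), and the reverse direction obtained by restricting a coloring of the composition to $\g(X_1)$ and $\g(X_2)$ via Lemma~\ref{le-col-subdigraph}. Your explicit observation that validity of a coloring is equivalent to orientedness of its color graph, and your source/sink bookkeeping for the series composition, match the paper's argument, so nothing further is needed.
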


\begin{proof}
\begin{enumerate}
  \item
  Set $F(v)$ includes obviously all possible solutions to color every vertex
on its own with the seven given colors.
  \item
  Let $(H_1,L_1,R_1)$ be any possible solution for coloring $\g(X_1)$,
which therefore is included in $F(X_1)$, as well as a possible solution
$(H_2,L_2,R_2)$ for coloring $\g(X_2)$ which is included in $F(X_2)$.
Let further $H_1+H_2$ be an oriented graph. Since the operation $\cup$
creates no additional edges in $\g(X_1\cup X_2)$, the vertices of
$\g(X_1)$ can still be colored with $H_1$ and the vertices of $\g(X_2)$
can still be colored with $H_2$ such that all vertices from $\g(X_1\cup
X_2)$ are legally colored. Further, all sinks in $\g(X_1)$ and $\g(X_2)$
are also sinks in $\g(X_1\cup X_2)$.
  The same holds for the sources. For an oriented digraph $H_1+H_2$ this
leads to $(H_1+H_2,L_1\cup L_2, R_1\cup R_2)\in F(X_1\cup X_2)$.

  Let $(H,L,R)\in F(X_1\cup X_2)$, then there is an induced subdigraph
$H_1$ of the color graph $H$ which colors $\g(X_1)$, an induced
subdigraph of $\g(X_1\cup X_2)$. Since $H$ is oriented, $H_1$ is oriented.
Let $L_1\subseteq L$ be the sources with vertices in $\g(X_1)$ and
$R_1\subseteq R$ be the sinks for vertices in $\g(X_1)$. Then, it holds
that $(H_1,L_1,R_1)\in F(X_1)$. The same arguments hold for $X_2$, such
that $(H_2,L_2,R_2)\in F(X_2)$.

  \item
  Let $(H_1,L_1,R_1)$ be any possible solution for coloring $\g(X_1)$,
which therefore is included in $F(X_1)$, as well as a possible solution
$(H_2,L_2,R_2)$ for coloring $X_2$ which is included in $F(X_2)$.
Further, let $H_1+H_2$ together with edges from $R_1\times L_2$ be an
oriented graph. Then, $H=(V_1\cup V_2,E_1\cup E_2\cup R_1\times L_2)$ is
an oriented coloring for $X=X_1\times X_2$. Since the sinks of $\g(X_1)$ are
connected with the sources of $\g(X_2)$
  in $\g(X)$ the sources of $L_1$ are the only sources left in $\g(X)$
as well as the sinks in $R_2$ are the only sinks left in $\g(X)$. This
leads to $(H,L_1,R_2)\in F(X)$.

  Let $(H,L,R)\in F(X_1\times X_2)$, then there is an induced subdigraph
$H_1$ of the color graph $H$ which colors $\g(X_1)$ which is an induced
subdigraph of $\g(X_1\times X_2)$. Since $H$ is oriented, $H_1$ is also
oriented. Since all the sources of $\g(X_1\times X_2)$ are in $\g(X_1)$
it holds that $L_1=L$ are also sources of $\g(X_1)$. Let $R_1$ be the
vertices in $\g(X_1)$ which only have out-going neighbors in $\g(X_2)$
but not in $\g(X_1)$,
  then $R_1$ are the sinks of $\g(X_1)$.
  Thus, it holds that $(H_1,L_1, R_1)\in F(X_1)$.
  Simultaneously, there is an induced subdigraph $H_2$ of the color graph
$H$ which colors $\g(X_2)$ which is an induced subdigraph of $\g(X_1\times
X_2)$. Since $H$ is oriented, $H_2$ is also oriented. Since all the
sinks of
  $\g(X_1\times X_2)$ are in $\g(X_2)$ it holds that $R_2=R$ are also
sinks of $\g(X_2)$. Let $L_2$ be the vertices in $\g(X_2)$ which only
have in-going neighbors in $\g(X_1)$ but not in $\g(X_2)$, then $L_2$
are the sources of $\g(X_2)$.
  Thus, it holds that $(H_2,L_2, R_2)\in F(X_2)$.
 \end{enumerate}
 This shows the statements of the lemma.
\end{proof}

Since every possible coloring of $G$ is part of the set $F(X)$, where $X$ is an msp-expression for 
$G$, it is possible to find a minimum coloring for $G$.

\begin{corollary}\label{cor1}
There is an oriented $r$-coloring for some msp-digraph $G$
which is given by some msp-expression $X$ if and only if there is some $(H,L,R)\in F(X)$
such that color graph $H$ has $r$ vertices.
Therefore, $\chi_o(G)=\min\{|V| \mid ((V,E),L,R)\in F(X)\}$.
\end{corollary}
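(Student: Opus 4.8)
The plan is to prove Corollary \ref{cor1} directly from Lemma \ref{le1}, which establishes the recursive correctness of the set-construction for $F(X)$. The key idea is that $F(X)$ is designed to contain \emph{all} valid colorings of $\g(X)$, each encoded as a triple $(H,L,R)$ where $H$ is the color graph (a tournament-like oriented graph serving as the image of the homomorphism), $L$ records the colors appearing on sources, and $R$ records the colors appearing on sinks. The bookkeeping of $L$ and $R$ is precisely what enables the recursive step for $\times$, since under series composition only the sink-colors of the left part and source-colors of the right part interact.

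First I would argue the forward direction: if there is an oriented $r$-coloring $c$ for $G = \g(X)$, then the induced color graph $H = (c(V), \{(c(u),c(v)) \mid (u,v)\in E\})$ together with $L = \{c(v) \mid v \text{ a source of } G\}$ and $R = \{c(v) \mid v \text{ a sink of } G\}$ forms a triple $(H,L,R)\in F(X)$ with $|V(H)| \le r$. This follows by induction on the structure of $X$ using Lemma \ref{le1}: the base case is part \ref{le1-1}, and the inductive steps are exactly the two inclusions (``$\subseteq$'' directions) proven in parts \ref{le1-2} and \ref{le1-3}, which show that every valid coloring of a composed digraph restricts to valid colorings of its two constituents and hence is reconstructed by the set operation. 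Conversely, if $(H,L,R)\in F(X)$ with $|V(H)| = r$, then $H$ is an oriented color graph for $G$ by the ``$\supseteq$'' directions of Lemma \ref{le1}, so there is a homomorphism from $G$ into the oriented graph $H$ on $r$ vertices; by Observation \ref{oc-tournament} this yields an oriented $r$-coloring of $G$.

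Having established the biconditional, the formula $\chi_o(G) = \min\{|V| \mid ((V,E),L,R)\in F(X)\}$ follows immediately: the oriented chromatic number is by definition the smallest $r$ for which an oriented $r$-coloring exists, and the biconditional identifies such $r$ with the sizes of color graphs appearing as first components of triples in $F(X)$. Taking the minimum over all these color-graph sizes gives exactly $\chi_o(G)$.

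I expect the main obstacle to be merely formal rather than conceptual: Lemma \ref{le1} already carries the full weight of the argument, so the corollary is essentially an induction that threads together its two directions. The one point requiring care is making the induction on the structure of the msp-expression $X$ fully explicit—ensuring that the claimed triple's color-graph size is bounded correctly and that the source/sink color sets $L,R$ are tracked consistently through both compositions—but each of these facts is precisely what Lemma \ref{le1} asserts at each recursive step. No separate heavy computation is needed.
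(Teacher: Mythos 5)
Your proposal is correct and takes essentially the same route as the paper: the paper derives Corollary \ref{cor1} directly from Lemma \ref{le1}, whose two directions (every coloring of a composed digraph restricts to colorings of its constituents and is thus reconstructed, and every combined triple yields a valid coloring) are exactly the structural induction you make explicit, together with the homomorphism viewpoint of Observation \ref{oc-tournament}. The only point worth noting, inherited from the paper's own phrasing rather than a flaw in your argument, is that $F(X)$ is built over the fixed palette $\{0,\ldots,6\}$ (legitimate by Theorem \ref{algspd}), so the forward direction of the biconditional should be read for colorings using exactly $r$ colors from this palette; the formula $\chi_o(G)=\min\{|V| \mid ((V,E),L,R)\in F(X)\}$ is unaffected.
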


\begin{theorem}\label{algspdd}
Let $G$ be an msp-digraph. Then, the oriented
chromatic number of $G$ can be computed in linear time.
\end{theorem}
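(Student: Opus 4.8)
The plan is to turn the recursive characterization of Lemma \ref{le1} and Corollary \ref{cor1} into a bottom-up dynamic program over an msp-tree, and to argue that each step costs only constant time because the table $F(X)$ has constant size. First I would invoke \cite{VTL82} to compute an msp-tree $T$ (equivalently an msp-expression $X$) for the input digraph $G$ in linear time; this tree has one leaf per vertex of $G$ and one inner node per applied operation, and since every inner node is binary it has $\bigo(n)$ nodes in total.

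Next I would process $T$ from the leaves toward the root, storing at each node $Y$ the set $F(Y)$. At a leaf I would initialize $F(v)$ by Lemma \ref{le1}(\ref{le1-1}); at an inner node labeled $\cup$ or $\times$ I would compute $F(Y)$ from the two sets stored at its children by Lemma \ref{le1}(\ref{le1-2}), respectively Lemma \ref{le1}(\ref{le1-3}). Correctness of the whole computation then follows by induction on the structure of $T$ directly from Lemma \ref{le1}: the set stored at the root equals $F(X)$. Once $F(X)$ is available, I would read off the answer by Corollary \ref{cor1} as $\chi_o(G)=\min\{|V|\mid ((V,E),L,R)\in F(X)\}$.

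The running-time analysis is the point that makes the statement nontrivial, and it rests entirely on the bound of Theorem \ref{algspd}. Because $\chi_o(G)\le 7$ for every msp-digraph, every color graph occurring in any triple may be assumed to use colors only from $\{0,\ldots,6\}$, so $H$ ranges over vertex-labeled oriented graphs on at most seven vertices and $L,R$ are subsets of a seven-element set. Hence $|F(Y)|\le 3^{\nicefrac{7(7-1)}{2}}\cdot 2^{7}\cdot 2^{7}\in\bigo(1)$ at every node, exactly the estimate observed before Lemma \ref{le1}. Combining two constant-size children therefore amounts to iterating over the constant number of pairs of triples, and for each pair testing in constant time whether the relevant sum graph (augmented by the arcs of $R_1\times L_2$ in the series case) is oriented and then forming the new triple; storing $F$ as a subset of the fixed constant-size universe of triples keeps membership tests and deduplication at $\bigo(1)$ per element. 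Thus each of the $\bigo(n)$ nodes is handled in $\bigo(1)$ time, the final minimum over the constant-size root set costs $\bigo(1)$, and the total running time is $\bigo(n)$.

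The main obstacle will not be any new combinatorial insight — Theorem \ref{algspd} and Lemma \ref{le1} already carry the substantive content — but rather the careful bookkeeping in the merge step: one has to ensure that the orientedness test and the deduplication are performed on \emph{labeled} color graphs, so that no admissible coloring is silently discarded and none is double-counted, and that the constant-size representation of $F$ genuinely makes every merge $\bigo(1)$ rather than depending on the size of the processed subexpression.
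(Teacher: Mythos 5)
Your proposal is correct and follows essentially the same route as the paper's own proof: a bottom-up dynamic program along the msp-tree from \cite{VTL82}, with the leaf, parallel, and series steps given by Lemma \ref{le1}, the answer extracted via Corollary \ref{cor1}, and constant-time merges justified by the constant bound on $|F(X)|$ coming from Theorem \ref{algspd}. The only minor difference is bookkeeping: the paper states the total running time as $\bigo(n+m)$ (accounting for the msp-tree construction), which matches your analysis once the tree-building cost is included.
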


\begin{proof}
Let $G$ be an msp-digraph on $n$ vertices and $m$ edges. Further, let  $T$ be an msp-tree for $G$
with root $r$. 
For some vertex $u$ of $T$ we denote by $T_u$
the subtree rooted at $u$ and $X_u$ the msp-expression defined by $T_u$.

In order to solve  the Oriented  Chromatic Number problem  for some msp-digraph  $G$, 
we traverse  msp-tree $T$ in a bottom-up order.
For every vertex $u$ of $T$ we compute $F(X_u)$
following the rules given in Lemma \ref{le1}. By Corollary~\ref{cor1} we can solve our
problem by $F(X_r)=F(X)$.

An msp-tree $T$ can be computed in $\bigo(n+m)$ time from
msp-digraph $G$,  see \cite{VTL82}.
Our rules given in Lemma~\ref{le1} show the following running times.
\begin{itemize}
\item
For every vertex $v\in V$ set  $F(v)$ is computable in $\bigo(1)$ time.

\item 
For every two msp-expressions $X_1$ and $X_2$
set $F(X_1  \cup X_2)$   can be computed in  $\bigo(1)$ time from $F(X_1)$ and $F(X_2)$.

\item 
For every two msp-expressions $X_1$ and $X_2$
set $F(X_1  \times X_2)$ can be computed
in $\bigo(1)$ time from $F(X_1)$ and $F(X_2)$.
\end{itemize}
Since we have $n$ leaves and $n-1$ inner vertices in msp-tree $T$,
the running time is in $\bigo(n+m)$.
\end{proof}

Due Corollary \ref{cor-tt-a} we know that
for every oriented  co-graph $G$  it holds that
$\chi_o(G)=\chi(\un(G))$. This equality does not hold for
 msp-digraphs by Example \ref{ex-six}
 and the next result.

\begin{proposition}\label{le-msp-uG}
Let $G$ be an msp-digraph. Then, it holds  that $\chi(\un(G))\leq 3$.
\end{proposition}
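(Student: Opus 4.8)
The plan is to prove the bound $\chi(\un(G)) \leq 3$ by structural induction on the msp-expression $X$ defining $G$, following exactly the recursive structure given in Definition \ref{def-msp}. Since the chromatic number of the underlying undirected graph ignores arc directions, I would work throughout with $\un(G)$ and the two operations as they act on underlying graphs: parallel composition $\cup$ simply takes a disjoint union of edge sets (creating no new edges), while series composition $\times$ adds a complete bipartite set of edges between the sinks $O_1$ of $G_1$ and the sources $I_2$ of $G_2$. The base case is the single vertex, which needs one color, so $\chi \leq 3$ holds trivially.

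The key difficulty is that a naive induction hypothesis ``$\chi(\un(G)) \leq 3$'' is too weak to survive the series composition: when we glue $O_1$ to $I_2$ by a complete bipartite graph, we must guarantee that the colors used on the sinks of $G_1$ and the colors used on the sources of $G_2$ can be made disjoint, and a mere bound of $3$ on each side does not by itself control how many distinct colors appear on the sink set or the source set. The plan is therefore to strengthen the induction hypothesis to record more refined information. Specifically, I would prove by induction the stronger statement that every msp-digraph $G$ admits a $3$-coloring of $\un(G)$ in which \emph{all sources receive one common color and all sinks receive one common color}, and moreover these two colors can be chosen to be any prescribed (possibly equal) pair from $\{1,2,3\}$. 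This is plausible because in an msp-digraph the sinks form an independent set and the sources form an independent set (no two sinks are adjacent, and likewise for sources), so assigning them monochromatically is consistent with properness on those sets.

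With this strengthened hypothesis, the inductive step becomes manageable. For the parallel composition $G_1 \cup G_2$, the source set of the result is $I_1 \cup I_2$ and the sink set is $O_1 \cup O_2$; since no edges are added, I would take colorings of $G_1$ and $G_2$ furnished by the hypothesis with matching source-colors and matching sink-colors and combine them directly, preserving the monochromatic source/sink property. For the series composition $G_1 \times G_2$, the sources of the result are $I_1$ and the sinks are $O_2$, while the newly added edges run between $O_1$ and $I_2$; here I would invoke the hypothesis to color $G_1$ with its sinks all receiving, say, color $a$, and color $G_2$ with its sources all receiving a color $b \neq a$, which is possible precisely because the hypothesis lets me prescribe the sink-color of $G_1$ and the source-color of $G_2$ independently. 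Since every new edge joins a sink of $G_1$ (color $a$) to a source of $G_2$ (color $b \neq a$), all new edges are properly colored, and the combined coloring still uses only colors from $\{1,2,3\}$ with monochromatic sources (in $I_1$) and monochromatic sinks (in $O_2$). The main obstacle to watch is exactly this color-prescription flexibility in the series step — verifying that the hypothesis genuinely allows choosing the sink-color of one factor and the source-color of the other to differ — but this follows because the three available colors always leave room to rename one class, so the induction closes and yields $\chi(\un(G)) \leq 3$.
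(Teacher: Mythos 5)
There is a genuine gap: your strengthened induction hypothesis is false. Consider the msp-digraph $G=(v_1\times v_2)\cup v_3$, i.e.\ a single arc $(v_1,v_2)$ together with an isolated vertex. Here $v_3$ is simultaneously a source and a sink of $G$, so in any coloring in which all sources share one color $s$ and all sinks share one color $t$ we get $s=c(v_3)=t$; but $v_1$ is a source, $v_2$ is a sink, and they are adjacent in $\un(G)$, forcing $s\neq t$. Hence no such coloring exists at all, with any number of colors. The problem is not the prescription flexibility you worry about in the series step, but the monochromatic-sinks requirement itself: vertices lying in both classes arise from the single-vertex base case and survive every parallel composition, and they collide with adjacency constraints elsewhere in the graph. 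Consequently the inductive step cannot invoke the hypothesis, and the induction does not close. (Your observation that sources form an independent set and sinks form an independent set is correct, but it only shows each class is internally consistent; it says nothing about the intersection of the two classes.)

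The paper avoids this by proving a weaker, correct invariant: all sources receive color $0$ while all sinks receive a color in $\{0,1\}$ — the color $0$ must be permitted for sinks precisely because of source-and-sink vertices. It establishes this on the universal digraphs $M'_i$ of Lemma \ref{le-mi}, with $M'_i=M'_{i-1}\cup(M'_{i-1}\times M'_{i-1})$, recoloring the second copy by the permutation $p(x)=(2x)\bmod 3$ and the third by $q(x)=(2x+1)\bmod 3$: the sinks of the middle copy then carry colors in $\{0,2\}$ while the sources of the third copy all carry color $1$, so every new series edge joins $\{0,2\}$ to $\{1\}$, and the invariant is restored because $p(0)=0$ and $q$ maps $\{0,1\}$ onto itself. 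Your outline can be repaired in the same spirit, even as a direct structural induction without the $M'_i$: strengthen the hypothesis to ``$\un(G)$ has a $3$-coloring with all sources colored $0$ and all sinks colored in $\{0,1\}$,'' handle parallel composition trivially, and in the series step recolor the left factor by $p$ and the right factor by $q$ before adding the edges. The permutation-renaming idea you invoke at the end is exactly the right tool — it just has to be applied to this bichromatic sink invariant rather than to the monochromatic one.
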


\begin{proof}
We show the result by giving a 3-coloring $c$ of $\un(M'_i)$ for the
oriented graphs $M'_i$ defined in Lemma \ref{le-mi}.
For some vertex $v$ of  $\un(M'_i)$ we define by $c(v,i)$ the color of $v$ in  $\un(M'_i)$.
First we color $\un(M'_0)$  by assigning color $0$ to the single
vertex in $\un(M'_0)$. For $i\geq 1$
we define the colors for the vertices $v$ in $\un(M'_i)=\un(M'_{i-1})\cup (\un(M'_{i-1} \times M'_{i-1}))$
according to the three copies of $\un(M'_{i-1})$ in $\un(M'_i)$ (numbered from left to right).
Therefore, we use the two functions $p(x)=(2\cdot x) \bmod 3$ and $q(x)=(2\cdot x +1) \bmod 3$. We
define
\[
c(v,i)= \left\{\renewcommand{\arraystretch}{1.3}\begin{array}{ll}
c(v,i-1)    & \text{if } v  \text{ is from the first copy,}\\
p(c(v,i-1)) & \text{if } v  \text{ is from the second copy, and}\\
q(c(v,i-1)) & \text{if } v  \text{ is from the third copy.}\\
\end{array}\right.
\]

In order to show the correctness of the given coloring for $\un(M'_i)$ by Definition \ref{def-oricol}
it suffices to verify that the end vertices of every edge in
$\un(M'_i)$ are colored differently. This follows since differently labeled
endvertices remain labeled differently when applying the permutations by
$p$ and $q$. Further, every  edge $\{u,v\}$  in  $\un(M'_i)$
arises from some arc  $(u,v)$ in $M'_i$  defined by the series composition and
satisfies $c(u,i)=0$ and  $c(v,i)=1$ or $c(u,i)=2$ and  $c(v,i)=1$. This holds true since
before applying the permutations by
$p$ and $q$ all sources $w$ are labeled by $c(w,i-1)=0$ and all sinks $v$ are labeled
by $c(v,i-1)=0$ or $c(v,i-1)=1$.
\end{proof}

For expression $X_1$ given in Example \ref{ex-msp} we
obtain by $\un(\g(X_1))$ a cycle on five vertices $C_5$ of
chromatic number $3$, which implies
that the bound of Proposition \ref{le-msp-uG} is sharp.
In \cite{GKL21} we introduced the concept of $g$-oriented $r$-colorings which 
generalizes both oriented colorings and colorings of the underlying 
undirected graph.

\section{Parameterized Results}

A {\em parameterized problem} is a pair $(\Pi,\kappa)$, where $\Pi$
is a decision problem, ${\mathcal I}$ the set of all instances
of $\Pi$ and $\kappa: \mathcal{I} \to \IN$
a so-called {\em parameterization}. 
The idea of parameterized algorithms is to restrict the combinatorial explosion to a parameter 
$\kappa(I)$ that is expected to be
small for all inputs $I\in  \mathcal{I}$.

An algorithm $A$ is an {\em FPT-algorithm with respect to $\kappa$}, if
there is a computable function $f: \IN \to \IN$ 
such that for every instance $I\in {\mathcal I}$ the running time
of $A$ on $I$ is at most $f(\kappa(I))\cdot |I|^{\bigo(1)}$  or 
equivalently at most $f(\kappa(I))+ |I|^{\bigo(1)}$. 
If there is an fpt-algorithm with respect to $\kappa$ that decides $\Pi$
then $\Pi$ is called {\em fixed-parameter tractable}.
FPT is the class of all parameterized problems which can
be solved by an FPT-algorithm.

An algorithm $A$ is an {\em XP-algorithm with respect to $\kappa$}, if
there are two computable functions $f,g: \IN \to \IN$ 
such that for every instance $I\in {\mathcal I}$ the running time
of $A$ on $I$ is at most $f(\kappa(I))\cdot |I|^{g(\kappa(I))}$.
If there is an xp-algorithm with respect to $\kappa$ which decides $\Pi$
then $\Pi$  is called {\em slicewise polynomial}.
XP is the class of all parameterized problems which can
be solved by an XP-algorithm.

In order to show fixed-parameter intractability, it is useful to 
show the hardness with respect  to one of the classes $\w[t]$ for some 
$t\geq 1$, which were introduced by  Downey and Fellows \cite{DF95} 
in terms of weighted satisfiability problems on classes of circuits. 
The relations $$\fpt \subseteq \w[1] \subseteq \w[2] \subseteq \ldots \subseteq
\xp$$ are called {\em $\w$-hierarchy} and 
all inclusions are assumed to be strict.


In the case of hardness with respect to some parameter $\kappa$ a natural
question is whether the problem remains hard for {\em combined} parameters, i.e.\
parameters $(\kappa_1,\ldots,\kappa_r)$ that consists of $r\geq 2$ parts of the input. 
The given notations can be carried over to combined parameters, e.g.\
an FPT-algorithm with respect to $(\kappa_1,\ldots,\kappa_r)$ is an algorithm of
running time $f(\kappa_1(I),\ldots,\kappa_r(I))\cdot |I|^{\bigo(1)}$ for
some computable function $f: \IN^r \to \IN$ depending only on $\kappa_1,\ldots,\kappa_r$.


For several problems the {\em standard parameter}, i.e.\ the threshold value given in the instance, 
is very useful.  Unfortunately, for the Oriented Chromatic Number problem the standard
parameter is the number of necessary colors and
does even not allow an $\xp$-algorithm, since $\OCN_{4}$ is NP-complete  \cite{KMG04}.

\begin{corollary}\label{cor-xp-r}
The  Oriented Chromatic Number problem  is not in $\xp$ when parameterized by $r$, unless  $\p=\np$.
\end{corollary}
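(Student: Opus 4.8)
The plan is to derive this immediately from the NP-completeness of $\OCN_{4}$ stated earlier (\cite{KMG04}), via the standard ``slicing'' argument that turns an $\xp$-algorithm into a polynomial-time algorithm once the parameter is fixed to a constant. The parameter here is $r$, the threshold value supplied in the instance.

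First I would recall the definition of an $\xp$-algorithm with respect to the parameterization $\kappa(I)=r$: it must run in time $f(r)\cdot |I|^{g(r)}$ for some computable functions $f,g:\IN\to\IN$. I would then argue by contradiction: suppose such an algorithm $A$ exists for $\OCN$. The key observation is that $A$ also solves every fixed slice $\OCN_{r}$, because restricting the instances to those whose threshold equals a fixed constant does not change the behaviour of $A$ on them. In particular, consider the slice $r=4$. On instances with threshold $4$ the running time of $A$ is bounded by $f(4)\cdot |I|^{g(4)}$, and since $f(4)$ and $g(4)$ are fixed constants, this is a polynomial in $|I|$.

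Consequently $A$ would be a polynomial-time algorithm deciding $\OCN_{4}$. Since $\OCN_{4}$ is NP-complete \cite{KMG04}, a polynomial-time algorithm for it would place an NP-complete problem in $\p$, forcing $\p=\np$. This contradicts the assumption $\p\neq\np$, so no $\xp$-algorithm for $\OCN$ parameterized by $r$ can exist unless $\p=\np$.

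I do not expect any real obstacle in this proof: it is a routine application of the principle that XP-membership in a parameter $r$ entails polynomial-time solvability of each constant slice $\OCN_{r}$, combined with the known hardness of a single slice. The only point requiring a little care is to state precisely that $g$ is evaluated at the \emph{fixed} value $r=4$, so that $|I|^{g(4)}$ is genuinely a fixed-degree polynomial rather than a bound whose exponent grows with the input; once this is made explicit, the contradiction with the NP-completeness of $\OCN_{4}$ is immediate.
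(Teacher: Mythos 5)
Your proof is correct and matches the paper's reasoning exactly: the corollary is stated there as an immediate consequence of the NP-completeness of $\OCN_{4}$ \cite{KMG04}, via precisely the slicing argument you give (an $\xp$-algorithm with parameter $r$ would run in time $f(4)\cdot |I|^{g(4)}$ on the slice $r=4$, yielding a polynomial-time algorithm for an NP-complete problem). Your explicit care that the exponent $g(4)$ is a fixed constant is the only nontrivial point, and you handle it properly.
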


\medskip
From an algorithmic point of view so called so-called {\em structural parameters}, which
are measuring the difficulty of decomposing a graph into a special tree-structure, are interesting.

For undirected graphs the clique-width \cite{CO00} and  tree-width \cite{RS86}
are the most important structural parameters. 
Clique-width is more general than  tree-width since
graphs of bounded tree-width have also bounded clique-width~\cite{CR05}. Conversely,
the tree-width can only be bounded by the clique-width under
certain conditions \cite{GW00}. A lot of
NP-hard graph problems admit poly\-nomial-time solutions when restricted to
graphs of bounded tree-width or graphs of bounded clique-width \cite{EGW01a}.

For directed graphs there are several attempts
to generalize tree-width  such as directed tree-width, directed path-width, DAG-width, or Kelly-width, which
are representative for what people are working on, see the surveys
\cite{GHKLOR14,GHKMORS16}. Unfortunately, none of these attempts
allows polynomial-time algorithms for a large class of problems on
digraphs of bounded width. This also holds for $\OCN_r$ and $\OCN$ by the following section.

\subsection{Parameterization by directed tree-width and related parameters}


As mentioned above, for $r\geq 4$ the $r$-Oriented Chromatic Number problem is hard on
DAGs \cite{CD06}. 
An $\xp$-algorithm with respect to  parameters directed tree-width, directed path-width, Kelly-width,
and DAG-width of the input digraph
would imply a polynomial time algorithm for every fixed parameter,
but even for parameter values $0$ or $1$ the problems are
NP-hard, since DAGs have width $0$ or $1$ for these parameters.

\begin{corollary}\label{cor-xp-ro}
The  Oriented Chromatic Number problem and for every positive integer $r\geq 4$ 
the $r$-Oriented Chromatic Number problem is not in $\xp$ when parameterized by directed 
tree-width, directed path-width, Kelly-width, or DAG-width, unless  $\p=\np$.
\end{corollary}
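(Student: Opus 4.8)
The plan is to reduce the claimed non-membership in $\xp$ to the known NP-hardness of the $r$-Oriented Chromatic Number problem on DAGs. The key observation, already recorded in the excerpt, is that a directed acyclic graph has directed tree-width, directed path-width, Kelly-width, and DAG-width all equal to $0$ or $1$. Intuitively, all of these directed width measures are designed to vanish on acyclic digraphs, since a DAG is, informally, already decomposed by any topological ordering and contains no ``nontrivial'' strongly connected structure for the width measure to detect. So I would first state (citing the standard references, e.g.\ \cite{GHKLOR14,GHKMORS16}) the fact that every DAG has width at most $1$ under each of these four parameters, which makes the parameter constant on the entire hard instance family.

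Next I would argue by contradiction. Suppose, for one of the four parameters $\kappa \in \{\dtw, \dpw, \text{Kelly-width}, \text{DAG-width}\}$, that there were an $\xp$-algorithm for $\OCN$ (respectively for $\OCN_r$ with some fixed $r \geq 4$) parameterized by $\kappa$. By definition of an $\xp$-algorithm, its running time on an instance $I$ is bounded by $f(\kappa(I)) \cdot |I|^{g(\kappa(I))}$ for computable functions $f,g$. Restricting attention to the class of DAGs, where $\kappa(I) \leq 1$ holds for every instance, both $f(\kappa(I))$ and the exponent $g(\kappa(I))$ are bounded by the constants $f(1)$ and $g(1)$ (taking maxima over $\kappa \in \{0,1\}$). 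Hence on DAGs the algorithm runs in time $f(1) \cdot |I|^{g(1)}$, i.e.\ in polynomial time. This would yield a polynomial-time decision procedure for $\OCN$ (resp.\ $\OCN_r$) on DAGs.

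Finally, I would invoke the hardness results already cited in the excerpt: $\OCN_4$ is NP-complete on bounded-degree DAGs by \cite{CD06}, and for every $r \geq 4$ the $r$-Oriented Chromatic Number problem is NP-hard on DAGs. A polynomial-time algorithm on DAGs would therefore force $\p = \np$, contradicting the standing assumption. The same conclusion for the parameterized (non-fixed-$r$) problem $\OCN$ follows a fortiori, since $\OCN$ restricted to the threshold value $r = 4$ is already $\OCN_4$, which is NP-complete on DAGs. This completes the contradiction for all four parameters simultaneously.

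I do not expect any genuine obstacle here: the argument is a routine ``constant parameter on hard instances kills $\xp$'' template, and every ingredient (the width bound on DAGs and the NP-hardness on DAGs) is supplied by cited results. The only point requiring a little care is to make sure the width bound is invoked for the correct direction of each of the four measures and to handle both the fixed-$r$ and the input-$r$ versions; but since a constant parameter value collapses the $\xp$ running-time bound to a fixed polynomial in exactly the same way for each measure, the four cases are entirely parallel and need no separate treatment beyond naming the respective references.
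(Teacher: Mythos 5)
Your proposal is correct and follows exactly the paper's argument: DAGs have width $0$ or $1$ under all four measures, so an $\xp$-algorithm would collapse to a polynomial-time algorithm on DAGs, contradicting the NP-completeness of $\OCN_4$ (and hence of $\OCN$ with $r=4$ in the instance) on DAGs from \cite{CD06} unless $\p=\np$. No gaps; the handling of both the fixed-$r$ and input-$r$ versions matches the paper's (implicit) reasoning.
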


\subsection{Parameterization by  directed clique-width}

By  \cite{GHKLOR14}, directed
clique-width performs much better than directed path-width, directed tree-width, DAG-width, and
Kelly-width from the parameterized complexity point of view.
Hence, we consider the parameterized complexity of $\OCN$   parameterized by
directed clique-width. 
The directed clique-width of digraphs
has been defined by  Courcelle
and Olariu \cite{CO00} as follows.

\begin{definition}[Directed clique-width]\label{D4}
The {\em directed clique-width} of a digraph $G$, $\dcws(G)$ for short,
is the minimum number of labels needed to define $G$ using the following four operations:

\begin{enumerate}
\item Creation of a new vertex $v$ with label $a$ (denoted by $a(v)$).

\item Disjoint union of two labeled digraphs $G$ and $H$ (denoted by $G\oplus H$).

\item Inserting an arc from every vertex with label $a$ to every vertex with label $b$
($a\neq b$, denoted by $\alpha_{a,b}$).

\item Change label $a$ into label $b$ (denoted by $\rho_{a\to b}$).
\end{enumerate}
An expression $X$ built with the operations defined above 
using $k$ labels is called a {\em directed  clique-width $k$-expression}.
Let $\g(X)$ be the digraph defined by $k$-expression $X$.
\end{definition}

A class of graphs $\mathcal{L}$ has {\em bounded  directed clique-width} if there is some integer 
$k$ such that every graph in $\mathcal {L}$ has  directed clique-width at most $k$.

\begin{proposition}\label{prop-cw-oc}
Every oriented co-graph has  directed clique-width at most 2.
\end{proposition}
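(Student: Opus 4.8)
The plan is to give an explicit directed clique-width expression using only two labels for an arbitrary oriented co-graph, proceeding by structural induction on the recursive definition in Definition~\ref{dcog}. The key observation is that the two building operations for oriented co-graphs, namely the disjoint union $\oplus$ and the order composition $\oslash$, both have natural counterparts among the four clique-width operations: the disjoint union is directly available, and the order composition---which adds all arcs from $G_1$ to $G_2$---can be realized by the arc-insertion operation $\alpha_{a,b}$ once the vertices of $G_1$ carry one label and the vertices of $G_2$ carry the other.

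First I would fix the invariant to be maintained by the induction: for every oriented co-graph $G$ there is a directed clique-width expression $X$ using the two labels $\{a,b\}$ such that $\g(X)=G$ and, moreover, \emph{all} vertices of $G$ carry the same label (say $a$) in the final labeled digraph. The base case is a single vertex $v$, obtained by $a(v)$, which trivially satisfies the invariant. For the inductive step with the disjoint union $G_1\oplus G_2$, I would take expressions $X_1$ and $X_2$ for $G_1$ and $G_2$ satisfying the invariant (so every vertex is labeled $a$ in both), and simply form $X_1\oplus X_2$; this uses only labels $a,b$ and leaves every vertex labeled $a$, so the invariant is preserved.

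The main case is the order composition $G_1\oslash G_2$. Here I would first relabel all vertices of the $G_2$-part from $a$ to $b$ using $\rho_{a\to b}$, then take the disjoint union with the $G_1$-part (whose vertices remain labeled $a$), then apply $\alpha_{a,b}$ to insert exactly the arcs from every $a$-vertex to every $b$-vertex, which are precisely the arcs from $V_1$ to $V_2$ demanded by $\oslash$. Finally I would apply $\rho_{b\to a}$ to restore the invariant that all vertices carry label $a$. Concretely, if $X_1$ and $X_2$ are the two subexpressions, the composite expression is $\rho_{b\to a}(\alpha_{a,b}(X_1\oplus \rho_{a\to b}(X_2)))$, which uses only two labels. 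Since oriented co-graphs contain no opposite arcs, the single application of $\alpha_{a,b}$ introduces no $2$-cycle, so the construction stays within oriented graphs.

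I expect no serious obstacle, as the two co-graph operations map almost verbatim onto the clique-width operations; the only point requiring care is bookkeeping of labels, which the stated invariant handles cleanly. One should also note that the disjoint union operation $\oplus$ in the directed clique-width definition coincides with the disjoint union used in the definition of oriented co-graphs, so no translation is needed there. The conclusion $\dcws(G)\le 2$ then follows immediately, since a valid $2$-expression has been exhibited for every oriented co-graph.
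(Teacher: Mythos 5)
Your proof is correct and matches the paper's own argument essentially verbatim: the same induction with the invariant that all vertices carry label $a$, the same handling of $\oplus$, and the identical expression $\rho_{b\to a}(\alpha_{a,b}(X_1\oplus \rho_{a\to b}(X_2)))$ for the order composition. Your explicit statement of the label invariant is a small clarity improvement over the paper's implicit bookkeeping, but the approach is the same.
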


\begin{proof}
We  next show how to give a directed clique-width $2$-expression 
for some oriented co-graph $G$ given by some di-co-expression.
The vertices of $G$ are labeled by $a$ and within an 
order composition we relabel the vertices of one involved
subgraph to $b$.
The following three expressions recursively allow  to give a directed clique-width $2$-expression $X$ for $G$.
\begin{itemize}

\item If $G=(\{v\},\emptyset)$, then $X=a(v)$.

\item If $G=G_1 \oplus G_2$, such that $G_1$ and $G_2$ are defined by a directed clique-width $2$-expression $X_1$ 
and $X_2$, respectively, then we obtain a directed clique-width $2$-expression for $G$ by  
$X=X_1 \oplus X_2$.

\item If $G=G_1 \oslash G_2$, such that $G_1$ and $G_2$ are defined by a directed clique-width $2$-expression $X_1$ 
and $X_2$, respectively, then  we obtain a directed clique-width $2$-expression for $G$ by 
$$X= \rho_{b\to a}(\alpha_{a,b}(X_1 \oplus \rho_{a\to b}(X_2))).$$
\end{itemize}
This shows the statements of the proposition.
\end{proof}

In \cite{GKR21b} the set of oriented co-graphs is characterized by 
excluding cycles of length 2, i.e.\ $\overleftrightarrow{P_2}=(\{u,v\},\{(u,v),(v,u)\})$ as a proper subset
of the set of all graphs of directed clique-width 2, while for the  undirected versions
both classes are equal~\cite{CO00}.

\begin{proposition}\label{prop-cw-msp}
Every msp-digraph has directed clique-width at most $7$.
\end{proposition}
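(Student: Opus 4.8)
The plan is to construct a directed clique-width expression for $G$ by induction along its msp-tree, using the same relabeling idea as in the proof of Proposition~\ref{prop-cw-oc} but with a richer label set. I would maintain the invariant that in the digraph $\g(X)$ defined by the current subexpression $X$, every vertex carries exactly one of four labels recording its role: $s$ for proper sources (indegree $0$, outdegree $>0$), $t$ for proper sinks (outdegree $0$, indegree $>0$), $b$ for vertices that are simultaneously a source and a sink (i.e.\ the isolated vertices), and $i$ for the remaining internal vertices. The purpose of $i$ is that an internal vertex never again takes part in a series composition, so parking all such vertices under a single dead label prevents them from acquiring spurious arcs in later operations.

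For the base case a single vertex is isolated, so I would set $X=b(v)$, using one label. For a parallel composition $G=G_1\cup G_2$ no arcs are created, and the sources, sinks, isolated, and internal vertices of $G$ are exactly the unions of the corresponding classes of $G_1$ and $G_2$; hence if $X_1$ and $X_2$ satisfy the invariant, then $X=X_1\oplus X_2$ already satisfies it while still using only the four labels $s,t,b,i$.

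The crux is the series composition $G=G_1\times G_2$, where arcs must be inserted from every sink of $G_1$ (the vertices labeled $t$ or $b$ in $X_1$) to every source of $G_2$ (the vertices labeled $s$ or $b$ in $X_2$), leaving all other pairs untouched. To keep the two operands separable I would first relabel the source/sink/isolated classes of $G_2$ to three fresh labels, i.e.\ replace $X_2$ by $\rho_{s\to s'}(\rho_{t\to t'}(\rho_{b\to b'}(X_2)))$ while leaving $G_2$'s internal vertices under the shared label $i$; then form $X_1\oplus X_2'$ and add the arcs with the four insertions $\alpha_{t,s'},\alpha_{t,b'},\alpha_{b,s'},\alpha_{b,b'}$. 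Finally I would restore the four-label invariant, being careful about the order of relabelings: the former sinks of $G_1$ (label $t$) and former sources of $G_2$ (label $s'$) have become internal and must be sent to $i$ before any label is reused, the former isolated vertices of $G_1$ (label $b$) have become proper sources and go to $s$, and the sinks of $G_2$ (labels $t'$ and $b'$) become the new sinks and go to $t$.

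The main obstacle, and the point at which the bound $7$ is forced, is exactly this bookkeeping: one must verify that no two classes which have to remain distinguishable are ever allowed to collide under a common label, and that the closing relabelings are applied in an order that does not merge classes prematurely. Counting the labels that must coexist during the arc-insertion step yields $s,t,b,i$ from $G_1$ together with $s',t',b'$ from $G_2$ (the internal class being the only one the operands may share), for a total of seven; a short case analysis of how each of the four categories of $G_1$ and $G_2$ is transformed by the inserted arcs then confirms that the resulting expression again meets the invariant. Since every operation in the construction uses at most these seven labels, it follows that $\dcws(G)\le 7$.
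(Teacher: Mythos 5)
Your construction is correct and takes essentially the same route as the paper's proof: four labels recording each vertex's source/sink role (source only, sink only, both, neither) plus three primed auxiliary labels for the right operand of a series composition, combined via the relabel--insert--relabel pattern, giving the bound $7$. If anything, your write-up is more careful than the paper's, since you make explicit the closing relabelings that update the roles after a series composition (former sinks of $G_1$ and former sources of $G_2$ become internal, isolated vertices of $G_1$ become proper sources, etc.), a bookkeeping step the paper's stated $\rho''$ glosses over.
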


\begin{proof}
We next show how to give a directed clique-width $7$-expression 
for some msp-digraph $G$ given by some msp-expression. Therefore we
use the following four labels for the vertices of $G$:
\begin{itemize}
\item 
All vertices which are source and sink in $G$ are labeled by $a$.

\item 
All vertices which are sink (and no source) in $G$ are labeled by $b$.

\item 
All vertices which are source (and no sink) in $G$ are labeled by $c$.

\item
All vertices which are no sink and no source in $G$ are labeled by $d$.
\end{itemize}
Furthermore, we use three auxiliary labels $a',b',c'$ to distinguish the sinks and sources
of the two combined graphs within a series composition.

The following three expressions recursively allow  to give a directed clique-width $7$-expression $X$ for $G$.
\begin{itemize}

\item If $G=(\{v\},\emptyset)$, then $X=a(v)$.

\item If $G=G_1 \cup G_2$, such that $G_1$ and $G_2$ are defined by a directed clique-width $7$-expression $X_1$ 
and $X_2$, respectively, then we obtain a directed clique-width $7$-expression for $G$ by  
$X=X_1 \oplus X_2$.

\item If $G=G_1 \times G_2$, such that $G_1$ and $G_2$ are defined by a directed clique-width $7$-expression $X_1$ 
and $X_2$, respectively, then  we obtain a directed clique-width $7$-expression for $G$ by 
$$X= \rho''(\alpha'(X_1 \oplus \rho'(X_2))),$$
where $\rho'(X')=\rho_{c\to c'}(\rho_{b\to b'}(\rho_{a\to a'}(X')))$ relabels all sources and sinks in $\g(X')$ 
to the corresponding auxiliary labels, $\rho''(X')=\rho_{c'\to c}(\rho_{b'\to b}(\rho_{a'\to a}(X')))$
relabels all sources and sinks in $\g(X')$ 
to the corresponding original labels, and 
$$\alpha'(X'\oplus X'')=\alpha_{a,a'}(\alpha_{a,b'}(\alpha_{c,a'}(\alpha_{c,b'}(X' \oplus X''))))$$
inserts the edges of the series composition
between $\g(X')$ and $\g(X'')$.
\end{itemize}
This shows the statements of the proposition.
\end{proof}


By the given definition every graph of directed clique-width at most $k$ can be represented by a tree structure,
denoted as {\em $k$-expression-tree}. The leaves of the  $k$-expression-tree represent the
vertices of the digraph and the inner nodes of the  $k$-expression-tree  correspond
to the operations applied to the subexpressions defined by the subtrees.
Using the  $k$-expression-tree many hard problems have been shown to be
solvable in polynomial time when restricted to graphs of bounded directed clique-width \cite{GWY16,GHKLOR14}.

In order to show  fixed parameter tractability for  $\OCN_{r}$ w.r.t.\ the
parameter directed clique-width one can use its defineability within monadic second order logic (MSO).
%
%
We restrict to $\MSOA$-logic, which allows propositional logic,
variables for vertices and vertex sets of digraphs, the predicate $\arc(u,v)$ for arcs of digraphs, and
quantifications over vertices and vertex sets \cite{CE12}.
For defining optimization problems we use the $\LMSOA$  framework given in~\cite{CMR00}.

The following theorem is from \cite[Theorem 4.2]{GHKLOR14}.

\begin{theorem}[\cite{GHKLOR14}]\label{th-ghk}
For every integer $k$ and $\MSOA$  formula $\psi$, every $\psi$-$\LMSOA$  optimization problem
is fixed-parameter tractable on digraphs of clique-width $k$, with the parameters $k$ and $|\psi|$.
\end{theorem}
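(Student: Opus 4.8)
The plan is to prove this as a dynamic-programming metatheorem in the style of Courcelle, Makowsky and Rotics \cite{CMR00}, transferred from undirected clique-width to directed clique-width. First I would assume that the input digraph $G$ on $n$ vertices is presented together with a directed clique-width $k$-expression-tree $T$ (the tree structure discussed after Definition \ref{D4}); this is standard for such metatheorems, since computing directed clique-width exactly is itself hard. Such a tree can be taken to have $\bigo(n\cdot k^2)$ nodes, and I would process it bottom-up in a single traversal. Recall that a $\psi$-$\LMSOA$ optimization problem asks, given $G$, for an assignment of the free vertex-set variables $X_1,\ldots,X_\ell$ of $\psi$ that optimizes a fixed linear evaluation function of the cardinalities $|X_1|,\ldots,|X_\ell|$ over all assignments satisfying $G\models\psi(X_1,\ldots,X_\ell)$, and the goal is to read this optimum off the root of $T$.

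The central tool I would use is the notion of an $\MSOA$ $q$-\emph{type} of a labeled digraph, where $q$ is the quantifier rank of $\psi$. Over the finite vocabulary consisting of the binary arc predicate, the $k$ unary label predicates, and $\ell$ further unary predicates interpreting $X_1,\ldots,X_\ell$, there are only finitely many inequivalent $\MSOA$ formulas of quantifier rank at most $q$, so labeled digraphs fall into finitely many $q$-types, and crucially the number of types is bounded by a function of $k$ and $|\psi|$ only. The key structural lemma is that each of the four operations of Definition \ref{D4} acts on $q$-types in a computable way: the type of a freshly created vertex $a(v)$ is read off directly; the type of $G\oplus H$ is determined by the types of $G$ and $H$ via the Feferman--Vaught composition theorem for disjoint unions; and since arc insertion $\alpha_{a,b}$ and relabeling $\rho_{a\to b}$ are quantifier-free $\MSOA$ transductions of the vocabulary, the type of the result is computable from the type of the single operand. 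This turns $T$ into a finite deterministic bottom-up tree automaton whose states are $q$-types.

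Building on this, I would run the dynamic program so that at each node $u$ of $T$ the table maps every reachable extended $q$-type $\tau$ (now including the interpretation of $X_1,\ldots,X_\ell$ on the vertices created so far) to the optimal value of the linear objective among partial assignments realizing $\tau$. At a leaf I branch over the $2^\ell$ placements of the created vertex into the sets $X_1,\ldots,X_\ell$; at a $\oplus$-node I combine two tables by taking, for each pair of operand types, the Feferman--Vaught-composed type together with the sum of the two objective values, keeping the optimum per resulting type; at an $\alpha_{a,b}$- or $\rho_{a\to b}$-node I relabel the type index according to the corresponding transduction. At the root I return the optimum over all types $\tau$ that entail $\psi$. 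Each table has size bounded by $f(k,|\psi|)$, each node is processed in time $f(k,|\psi|)$, and there are $\bigo(n\cdot k^2)$ nodes, yielding total running time $f(k,|\psi|)\cdot n^{\bigo(1)}$, exactly the claimed $\fpt$ bound with parameters $k$ and $|\psi|$.

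The hard part will be establishing the type-algebra lemma rigorously: proving that $\oplus$ induces a well-defined composition on $q$-types (the Feferman--Vaught step), that $\alpha_{a,b}$ and $\rho_{a\to b}$ preserve and compute $q$-types when viewed as transductions, and that the number of types is bounded by a function of $k$ and $q$. This finiteness is precisely what makes the automaton finite and the algorithm fixed-parameter tractable; the price is that $f$ is non-elementary (a tower of exponentials in $q$), which is harmless for $\fpt$. A secondary point to handle cleanly is the interaction between the linear objective and the type DP, namely that taking the optimum per type is sound because the objective is additive over the disjoint union and invariant under the unary operations.
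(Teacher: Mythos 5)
Your proposal is correct and takes essentially the same route as the source: the paper states Theorem \ref{th-ghk} without proof, citing \cite{GHKLOR14}, and the argument there is precisely the Courcelle--Makowsky--Rotics-style \cite{CMR00} type-based bottom-up dynamic programming over the directed clique-width $k$-expression-tree that you describe, with Feferman--Vaught composition at $\oplus$ and the observation that $\alpha_{a,b}$ and $\rho_{a\to b}$ are quantifier-free definable operations, so that $q$-types (and optima of the linear objective per type) propagate computably with a table size bounded in $k$ and $|\psi|$ only. Nothing essential is missing from your sketch.
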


In \cite[Proposition 4.19]{GHKLOR14} the following  monadic second order logic formula  $\psi$ for $\OCN_{r}$ is given.

\begin{remark}Let $G=(V,E)$ be an oriented graph.
We can define $\OCN_{r}$ by the $\MSOA$ formula
$$\begin{array}{lcl}
\psi&=&\exists V_1,\ldots, V_r: \\
&&\displaystyle\left( \bigwedge_{i=1,\ldots,r} \forall x,y\in V_i \left(\neg \arc(x,y)\right) \wedge \bigwedge_{i,j=1,\ldots,r} \forall x,y \in X_i, z,t\in X_j \left(\arc(x,z) \rightarrow  \neg \arc(t,y)\right)     \right),
\end{array}
$$
where the vertices of set $V_i$ are mapped onto vertex $i$ of an orientation of a complete graph $K_r$.
\end{remark}

Since for the length of the given formula $\psi$ it holds  $|\psi|\in \bigo(r)$
by Theorem \ref{th-ghk} we obtain the following result.

\begin{corollary}\label{o}
The Oriented Chromatic Number problem
 is fixed parameter tractable on digraphs of clique-width $k$, with the parameters $k$ and $r$.
\end{corollary}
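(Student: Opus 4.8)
The plan is to obtain the statement as a direct instantiation of Theorem \ref{th-ghk} to the single $\MSOA$ formula $\psi$ constructed in the Remark preceding the corollary. First I would recall why that formula captures the problem: for an oriented graph $G=(V,E)$ and a fixed integer $r$, the sentence $\psi$ is satisfied by $G$ exactly when $G$ admits an oriented $r$-coloring. The existential prefix $\exists V_1,\ldots,V_r$ guesses the $r$ color classes, the first block of conjunctions $\bigwedge_i \forall x,y\in V_i\,(\neg\arc(x,y))$ forces each class to be independent, and the second block forbids, for any ordered pair of classes, the simultaneous presence of arcs running in both directions between them. These are precisely the two conditions of Definition \ref{def-oc}, so deciding $G\models\psi$ is exactly the decision problem $\OCN_{r}$.

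Since $\psi$ is a closed formula, checking $G\models\psi$ is the degenerate (purely model-checking) case of a $\psi$-$\LMSOA$ problem, which is covered by Theorem \ref{th-ghk}. Applying that theorem yields an algorithm deciding $\OCN_{r}$ on any digraph $G$ of directed clique-width $k$ in time $f(k,|\psi|)\cdot|G|^{\bigo(1)}$ for some computable function $f$. The remaining work is to replace the parameter $|\psi|$ by $r$: the construction in the Remark produces, for each value of $r$, a formula $\psi=\psi_r$ with $|\psi_r|\in\bigo(r)$, as noted immediately before the statement. Hence $|\psi|$ is bounded by a computable function of $r$ alone, and the running time can be rewritten as $g(k,r)\cdot|G|^{\bigo(1)}$ for a computable $g$, which is exactly fixed-parameter tractability with respect to the combined parameter $(k,r)$.

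The only point requiring care is the dual role of $r$. In $\OCN$ the threshold $r$ belongs to the input, whereas Theorem \ref{th-ghk} speaks of one fixed formula $\psi$. This is reconciled by the fact that we parameterize by the pair $(k,r)$: once the parameter value $r$ is fixed, the pertinent formula is the single sentence $\psi_r$, so we are squarely in the hypothesis of Theorem \ref{th-ghk} with $|\psi|=|\psi_r|$, and the theorem's dependence on $|\psi|$ is absorbed into the admissible dependence on the parameter $r$. No further ingredient is needed, and the corollary follows.
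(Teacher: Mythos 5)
Your proposal is correct and follows essentially the same route as the paper: it applies Theorem \ref{th-ghk} to the $\MSOA$ formula $\psi_r$ from the preceding Remark and uses $|\psi_r|\in\bigo(r)$ to trade the dependence on $|\psi|$ for a dependence on the parameter $r$. Your additional remarks verifying that $\psi_r$ encodes the two conditions of Definition \ref{def-oc} and clarifying the dual role of $r$ are sound elaborations of what the paper leaves implicit.
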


By Corollary \ref{o} we know the existence of an fpt-algorithm  for the Oriented Chromatic Number problem w.r.t.\ the
combined parameter of directed clique-width and standard parameter.
Next we give such an algorithm and estimate its running time.

\medskip
In order to compute the  oriented chromatic number of some oriented graph $G$ 
of oriented chromatic number at most $r$ which is defined by some 
directed $k$-clique-width expression $X$ we extend our solution given for msp-digraphs in Section \ref{sec-cog}.
Therefore, we recursively compute the set $F(X)$ of all labeled color graphs $H=(V,E)$ on $V=\{0,\ldots,r-1\}$. 
Every vertex $v\in V$ is labeled by  set $L\subseteq \{1,\ldots,k\}$
consisting of all clique-width labels of the vertices in $G$ with color $c$.
The number of vertex labeled, i.e., the vertices are distinguishable
from each other, oriented graphs  on $n$
vertices is  $3^{\nicefrac{n(n-1)}{2}}$. Since 
for every $v\in V$ there at most $2^{k}-1$ label sets $L$, we can conclude
$$|F(X)|\leq 3^{\nicefrac{r(r-1)}{2}} \cdot (2^{k}-1)^r  \leq 3^{\nicefrac{r^2}{2}}  \cdot 2^{k\cdot r} \leq 2^{r^2}  \cdot 2^{k\cdot r} = 2^{r(r+k)} $$
which is independent on the size of $G$.

For two color graphs $H_1=(V_1,E_1)$ and  $H_2=(V_2,E_2)$
we define $H_1+H_2=(V,E)$ as follows. Vertex set $V$ is obtained from
$V_1\cup V_2$ by merging the label sets of vertices for the same color.
Formally, for every $0\leq c \leq r-1$ we consider all $(c,L')\in V_1\cup V_2$
and replace them by $(c,\cup_{(c,L')\in  V_1\cup V_2}L' )$. Edge set $E$ is
obtained by $E_1\cup E_2$.

\begin{lemma}\label{le1cw}
\begin{enumerate}
\item For every $v\in V$ it holds  $F(a(v))=\{(\{(i,\{a\})\},\emptyset ) \mid 0\leq i \leq r-1\}$.

\item For every two $k$-expressions $X_1$ and  $X_2$ we obtain set $F(X_1\oplus X_2)$
from  sets $F(X_1)$ and $F(X_2)$ as follows. For every $H_1 \in F(X_1)$ and
every $H_2 \in F(X_2)$ such that graph $H_1+H_2$ is oriented we
put $H_1 + H_2$ into $F(X_1\oplus X_2)$.

\item Set $F(\alpha_{a,b}(X))$ can be obtained from  $F(X)$ as follows. First we remove from
$F(X)$ all color graphs $(V,E)$ such that there is some $(c_1,L_1)\in V$ and some $(c_2,L_2)\in V$
such that $((c_2,L_2),(c_1,L_1))\in E$ and $a\in L_1$ and $b\in L_2$. Afterwards
we modify every color graph $H=(V,E)$ in $F(X)$ as follows. If there is some $(c_1,L_1)\in V$ and some $(c_2,L_2)\in V$
such that $a\in L_1$ and $b\in L_2$ we insert $((c_1,L_1),(c_2,L_2))$ into $E$.
The resulting set is $F(\alpha_{a,b}(X))$.

\item It holds that $F(\rho_{a \to b}(X)) = \{\rho_{a \to b}((V,E)) \mid (V,E)\in F(X)\}$. Here we apply $\rho_{a \to b}((V,E))=(\rho_{a \to b}(V),\rho_{a \to b}(E))$,
$\rho_{a \to b}(V)=\{(c,\rho_{a \to b}(L))\mid(c,L)\in V\}$,  $\rho_{a \to b}(L)=\{\rho_{a \to b}(x)\mid x\in L \}$, $\rho_{a \to b}(x)=b$, if $x=a$ 
and $\rho_{a \to b}(x)=x$, if $x\neq a$,   $\rho_{a \to b}(E)=\{((c_1,\rho_{a \to b}(L_1)),(c_2,\rho_{a \to b}(L_2))) \mid ((c_1,L_1),(c_2,L_2))\}$.

\end{enumerate}
\end{lemma}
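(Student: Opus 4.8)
The plan is to prove, by induction on the construction of the directed clique-width $k$-expression $X$, that $F(X)$ equals the set of all labeled color graphs that \emph{represent} an oriented coloring of $\g(X)$ with colors from $\{0,\dots,r-1\}$. I call a labeled color graph $H=(V,E)$ such a representation if $H$ is a loopless, $2$-cycle-free oriented graph, the vertices of $V$ are exactly the colors actually used, the arcs of $E$ are exactly the arcs occurring between distinct color classes, and each vertex $(i,L)\in V$ satisfies that $L$ is precisely the set of clique-width labels carried by the vertices of $\g(X)$ of color $i$. Granting this equivalence, each of the four rules is established by two directions: \emph{soundness}, that every object the rule inserts into the new set is a valid representation of a coloring of the composed digraph, and \emph{completeness}, that every valid representation of the composed digraph arises from the rule applied to representations of the parts. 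The analogue of Corollary~\ref{cor1} then lets one read off $\chi_o(\g(X))$ as the minimum number of vertices over $F(X)$.

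The first three rules are routine. For a single vertex $a(v)$ the only freedom is the choice of color $i$, there are no arcs, and the unique clique-width label is $a$, which yields exactly the listed $r$ entries. For the disjoint union, since $\oplus$ introduces no arcs, a coloring of $\g(X_1\oplus X_2)$ restricts to colorings of the two parts and, conversely, two colorings combine; the only obstruction is that merging the two color graphs may place opposite arcs between the same pair of colors, which is precisely excluded by insisting that $H_1+H_2$ be oriented, while the union of label sets per color reflects that a color class of the union is the disjoint union of the corresponding color classes of the parts. For the relabeling $\rho_{a\to b}$, the underlying digraph, its colors, and its between-class arcs are all unchanged; only the clique-width labels change, so applying $\rho_{a\to b}$ to every label set and leaving the arc set fixed is a bijection between the representations of $\g(X)$ and those of $\g(\rho_{a\to b}(X))$, noting that distinct colors stay distinct, so no merging of color-graph vertices occurs.

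The arc-insertion rule $\alpha_{a,b}$ is the main obstacle, because it is the only operation that creates arcs and hence the only one that can violate either condition of an oriented coloring. After $\alpha_{a,b}$ every vertex with clique-width label $a$ gains an arc to every vertex with label $b$; in the color graph this forces an arc from every color $c_1$ whose class contains an $a$-labeled vertex to every color $c_2$ whose class contains a $b$-labeled vertex, which is exactly the insertion of $((c_1,L_1),(c_2,L_2))$ for all $(c_1,L_1),(c_2,L_2)\in V$ with $a\in L_1$ and $b\in L_2$. Two things can go wrong. First, the reverse arc $((c_2,L_2),(c_1,L_1))$ may already be present, in which case the insertion would create a $2$-cycle and violate the second condition of Definition~\ref{def-oc}; this is exactly what the removal step discards beforehand. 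Second, a single color class may contain both an $a$-labeled and a $b$-labeled vertex (that is, $a,b$ lie in one label set $L$), so that the inserted arc would join two equally colored vertices, violating the proper-coloring condition; this case amounts to inserting a loop and is excluded precisely because a color graph must be loopless and oriented. Establishing that, after these exclusions, the surviving graphs are in bijection with the representations of $\g(\alpha_{a,b}(X))$, and that the label sets are unaffected by the operation, completes the induction and thereby the four recursion rules of the lemma.
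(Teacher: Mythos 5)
Your proposal is correct and follows essentially the same route as the paper: the same soundness-and-completeness (double-inclusion) argument for each of the four operations, with your explicitly stated induction invariant on $F(X)$ merely making precise the semantic description of $F(X)$ that the paper keeps in the surrounding text rather than in the proof itself. If anything, you are slightly more careful than the paper's own proof on the $\alpha_{a,b}$ corner case in which a single color class carries both labels $a$ and $b$: the stated removal step (which only tests for the reverse arc between two color-graph vertices) does not literally catch it, and your observation that the forced arc would be a loop, hence excluded because color graphs must be oriented, is exactly the right repair.
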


\begin{proof}
\begin{enumerate}
  \item Set $F(a(v))$ includes obviously all possible solutions to color every vertex
on its own with the $r$ given colors.
  \item Similar to (\ref{le1-2}.) of Lemma \ref{le1}.
  \item Let $F'(X)$ be the set obtained from  $F(X)$ using the modifications given in the lemma. We next show that $F'(X)=F(\alpha_{a,b}(X))$. Let $H=(V,E)\in F(X)$ be some color graph for $\g(X)$.  The modifications
  of $H$ given in the lemma either remove $H$ or lead to a  color graph for $\g(\alpha_{a,b}(X))$. Thus, it holds that $F'(X)\subseteq F(\alpha_{a,b}(X))$. Next, let  $H=(V,E)\in F(\alpha_{a,b}(X))$ be some color graph for $\g(\alpha_{a,b}(X))$. Then we can assume, that 
  there is some $(c_1,L_1)\in V$ and some $(c_2,L_2)\in V$
  such that $a\in L_1$ and $b\in L_2$ and $((c_1,L_1),(c_2,L_2))\in E$. Since the edge insertion $\alpha_{a,b}$
  does not change the vertices or clique-width labels there is 
  some color graph $H'=(V,E')\in F(X)$ for $\g(X)$ such that $H'$ is a subgraph of $H$ in which 
  the edge $((c_1,L_1),(c_2,L_2))$ can be missing.  The modifications
  of $H'$ given in the lemma insert the edge $((c_1,L_1),(c_2,L_2))$ into $H'$ which implies
  that $H$ is in  $F'(X)$.

   \item The relabeling $\alpha_{a,b}(X)$ can change the labels but not the structure of $\g(X)$.
   Thus, for every color graph $H=(V,E)$  in $F(X)$ for every vertex $(c,L)\in V$ we have to change 
   the set $L\subseteq \{1,\ldots,k\}$ of clique-width labels according to the performed relabeling
   operation $\rho_{a \to b}$, which is done by the rules given in the lemma. 
   
 \end{enumerate}
 This shows the statements of the lemma. 
\end{proof}

Since every possible coloring of $G$ is part of the set $F(X)$, where $X$ is a directed $k$-clique-width expression for 
$G$, it is possible to find a minimum coloring for $G$.

\begin{corollary} \label{cor3}
Let $G=(V,E)$ be an oriented graph given by a directed clique-width $k$-expression $X$.
There is an oriented $r$-coloring for $G$
if and only if there is some $H \in F(X)$ which has  $r$ vertices.
Therefore, $\chi_o(G)=\min\{|V| \mid (V,E)\in F(X)\}$.
\end{corollary}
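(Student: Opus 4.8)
The plan is to prove the slightly stronger \emph{semantic} statement from which the corollary is immediate: that $F(X)$ is \emph{exactly} the collection of annotated color graphs induced by the valid oriented colorings of $G=\g(X)$ that use colors from $\{0,\ldots,r-1\}$. Here, for a coloring $c$ of $G$, its annotated color graph $H_c$ has one vertex $(i,L_i)$ for each color $i$ actually used, where $L_i\subseteq\{1,\ldots,k\}$ collects the clique-width labels of the vertices colored $i$, and an arc $((i,L_i),(j,L_j))$ whenever $G$ contains an arc from a color-$i$ vertex to a color-$j$ vertex. First I would recall, from Definition \ref{def-oc} together with Observation \ref{oc-tournament}, the key equivalence that a mapping $c$ is a valid oriented coloring of $G$ if and only if its color graph is an oriented graph: absence of loops encodes $c(u)\neq c(v)$ on arcs, and absence of opposite arcs ($2$-cycles) encodes the second condition of Definition \ref{def-oc}. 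Moreover the number of vertices of $H_c$ equals the number of colors used by $c$.

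Second, I would prove the semantic statement by structural induction on the $k$-expression $X$, handling the four construction cases by invoking the corresponding four parts of Lemma \ref{le1cw}, exactly as Corollary \ref{cor1} is obtained from Lemma \ref{le1}. The base case $X=a(v)$ is part~(1): the only colorings of a single labeled vertex are the $r$ singletons $(\{(i,\{a\})\},\emptyset)$. For $X=X_1\oplus X_2$ the disjoint union creates no new arcs, so the colorings of $G$ are precisely the label-merged combinations of colorings of $\g(X_1)$ and $\g(X_2)$ whose combined color graph stays oriented, which is part~(2). For $X=\alpha_{a,b}(X')$ the operation adds exactly the arcs from label-$a$ to label-$b$ vertices; since the annotations record which color classes contain label-$a$ and label-$b$ vertices, inserting the arc $((c_1,L_1),(c_2,L_2))$ whenever $a\in L_1$ and $b\in L_2$, and discarding any color graph that would thereby acquire a $2$-cycle, produces precisely the color graphs of valid colorings of $G$; this is part~(3). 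Finally, for $X=\rho_{a\to b}(X')$ the relabeling leaves the digraph structure, and hence all colorings, untouched and only rewrites the annotations, which is the bookkeeping of part~(4).

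Having established the invariant, the corollary follows directly. By the invariant and the vertex-count identity of the first step, an oriented coloring of $G$ using exactly $r$ colors exists if and only if some $H\in F(X)$ has exactly $r$ vertices. Taking the minimum over all members of $F(X)$ therefore gives $\chi_o(G)=\min\{|V|\mid (V,E)\in F(X)\}$.

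I expect the main obstacle to be the $\alpha_{a,b}$ case, where one must argue that the label annotations carry \emph{enough} information to reproduce the arc insertion at the level of color graphs: two colorings that agree on their annotated color graph must induce the same new arcs, so that working with the set $F(X)$ rather than with individual colorings loses nothing. Verifying both inclusions here --- that every retained-and-augmented color graph genuinely comes from a coloring of $G$, and that every coloring of $G$ arises this way --- is exactly the content already discharged in the proof of Lemma \ref{le1cw}, so in the write-up this step reduces to citing that lemma; the remaining cases are then routine.
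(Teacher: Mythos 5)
Your proposal is correct and matches the paper's route: the paper derives Corollary \ref{cor3} directly from Lemma \ref{le1cw} in exactly the way you describe, namely that $F(X)$ is built by structural induction on the $k$-expression to contain precisely the (annotated) color graphs of all valid oriented colorings, mirroring how Corollary \ref{cor1} follows from Lemma \ref{le1}. Your explicit statement of the semantic invariant and of the equivalence ``$c$ is a valid oriented coloring iff its color graph is oriented'' only spells out what the paper leaves implicit, so nothing is genuinely different.
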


\begin{theorem}\label{xp-dca}
The Oriented Chromatic Number problem on digraphs on $n$ vertices given by a directed
clique-width $k$-expression can be solved
in  $\bigo(n\cdot k^2 \cdot  2^{r(r+k)})$ time.
\end{theorem}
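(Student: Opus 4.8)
The plan is to establish a running-time bound for the dynamic-programming algorithm implicit in Lemma~\ref{le1cw} and Corollary~\ref{cor3}. First I would recall the setup: the input is a directed clique-width $k$-expression $X$ defining $G$, which corresponds to a $k$-expression-tree $T$ with $\bigo(n)$ nodes, since each of the $n$ vertices gives a leaf and the internal structure of a clique-width expression is linear in the number of vertices and edge-insertion/relabel operations. I would traverse $T$ bottom-up, computing $F(X_u)$ at each node $u$ from the values already computed at its children, following exactly the four rules of Lemma~\ref{le1cw}. By Corollary~\ref{cor3} the final set $F(X)$ at the root determines $\chi_o(G)$ as the minimum number of vertices over all color graphs it contains, so correctness is inherited and only the running-time analysis remains.

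The core of the argument is the bound $|F(X_u)|\leq 2^{r(r+k)}$ established just before Lemma~\ref{le1cw}, which is independent of $n$: each color graph has vertex set contained in $\{0,\dots,r-1\}$ with each color carrying one of at most $2^k-1$ label subsets, and there are at most $3^{\nicefrac{r(r-1)}{2}}$ possible oriented arc-sets. I would then bound the cost of processing each node type. The leaf and relabel operations ($a(v)$ and $\rho_{a\to b}$) touch each stored color graph once and do only local work on vertex labels, costing $\bigo(k^2)$ per color graph (to rewrite label sets and incident arcs). The edge-insertion operation $\alpha_{a,b}$ likewise scans each color graph's vertices and arcs once, again $\bigo(k^2)$ per graph. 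The expensive operation is the disjoint union $\oplus$, which pairs every $H_1\in F(X_1)$ with every $H_2\in F(X_2)$, so it costs on the order of $|F(X_1)|\cdot|F(X_2)|\cdot k^2 \leq 2^{r(r+k)}\cdot 2^{r(r+k)}\cdot k^2$ — but I must be careful here, since naively this squares the table size.

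The main obstacle is precisely this quadratic blow-up at union nodes: $\bigo(k^2\cdot 2^{2r(r+k)})$ per node would give a total of $\bigo(n\cdot k^2\cdot 2^{2r(r+k)})$, which exceeds the claimed bound $\bigo(n\cdot k^2\cdot 2^{r(r+k)})$. To obtain the stated exponent I would argue that the relevant accounting charges each \emph{resulting} color graph $H_1+H_2$ rather than each pair, so that the output of any single operation is itself of size at most $2^{r(r+k)}$, and that the per-node work is dominated by producing that output set with $\bigo(k^2)$ work per produced graph. Under this accounting each of the $\bigo(n)$ nodes contributes $\bigo(k^2\cdot 2^{r(r+k)})$, yielding the total $\bigo(n\cdot k^2\cdot 2^{r(r+k)})$. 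I would close by noting that computing $\chi_o(G)$ from the root set $F(X)$ via Corollary~\ref{cor3} is a single pass costing $\bigo(2^{r(r+k)})$, which is absorbed into the stated bound, completing the proof.
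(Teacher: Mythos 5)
Your overall architecture matches the paper's proof --- a bottom-up dynamic program over the expression tree using the rules of Lemma~\ref{le1cw}, the table bound $|F(\cdot)|\leq 2^{r(r+k)}$, and Corollary~\ref{cor3} at the root --- but there are two concrete gaps in the running-time analysis. First, your premise that the $k$-expression-tree has $\bigo(n)$ nodes is unjustified: such a tree has $n$ leaves and $n-1$ union nodes, but the unary operations $\rho_{a\to b}$ and $\alpha_{a,b}$ may occur arbitrarily often in the given expression, so the tree size is not bounded by any function of $n$ and $k$ alone. The paper closes exactly this hole by invoking the normal form for clique-width expressions of \cite{EGW03} (the proof of Theorem~4.2 there carries over to directed expressions): one may assume that between two consecutive disjoint unions all edge insertions precede all relabelings, which yields $n-1$ union operations, at most $(n-1)(k-1)$ relabelings, and at most $(n-1)k(k-1)/2$ edge insertions. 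This normalization --- not per-graph local work on label sets, as in your accounting --- is the actual source of the factor $k^2$ in the stated bound; each of the $\bigo(n k^2)$ operations is then charged $\bigo(2^{r(r+k)})$.

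Second, your repair of the union-node blow-up is an assertion rather than an argument. You correctly observe that pairing every $H_1\in F(X_1)$ with every $H_2\in F(X_2)$ costs on the order of $2^{2r(r+k)}$, which overshoots the claimed exponent, but the proposed remedy --- charging each \emph{resulting} color graph rather than each pair --- supplies no mechanism for producing $F(X_1\oplus X_2)$ without inspecting the pairs: to decide which merged graphs $H_1+H_2$ are oriented and hence belong to the result set, one has to test combinations, and deduplicating the outputs does not by itself cap the enumeration cost. The paper's own proof simply asserts that $F(X\oplus Y)$ is computable in $\bigo(2^{r(r+k)})$ time from $F(X)$ and $F(Y)$ and does not elaborate; to defend the theorem as stated you would need to actually establish such a per-operation bound (for instance by enumerating the at most $2^{r(r+k)}$ candidate output graphs and deciding membership for each without iterating over all pairs), or else settle for the weaker total bound $\bigo(n\cdot k^2\cdot 2^{2r(r+k)})$, which still gives fixed-parameter tractability in the combined parameter $(k,r)$ but is not the bound claimed.
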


\begin{proof}
Let $G=(V,E)$  be a digraph of directed clique-width at most $k$ and $T$ be a $k$-expression-tree
for $G$ with root $w$.
For some vertex $u$ of $T$ we denote by $T_u$
the subtree rooted at $u$ and $X_u$ the $k$-expression defined by $T_u$.
In order to solve the Oriented Chromatic Number problem for $G$,
we traverse $k$-expression-tree  $T$ in a bottom-up order.
For every vertex $u$ of $T$ we compute $F(X_u)$ following the rules
given in Lemma \ref{le1cw}. By Corollary \ref{cor3} we can solve our
problem by $F(X_w)=F(X)$.

Our rules given Lemma \ref{le1cw} show the following running times.
For every $v\in V$ and  $a\in\{1,\ldots,k\}$ set $F(a(v))$
can be computed in $\bigo(1)$.
The set $F(X \oplus Y)$ can be computed
in time $\bigo(2^{r(r+k)})$  from $F(X)$ and $F(Y)$.
The sets $F(\alpha_{a,b}(X))$ and 
$F(\rho_{a \to b}(X))$  can be computed
in time    $\bigo(2^{r(r+k)})$      from $F(X)$.

In order to bound the number and order of operations within directed clique-width expressions,
we can use the normal form for clique-width expressions defined in  \cite{EGW03}.
The proof of Theorem 4.2 in \cite{EGW03} shows that also for directed  clique-width expression $X$,
we can assume that for every subexpression, after a disjoint union operation
first there is a sequence of edge insertion operations followed by a sequence of
relabeling operations, i.e. between two disjoint union operations there is no relabeling before
an edge insertion. Since there are $n$ leaves in $T$, we have $n-1$
disjoint union operations, at most $(n-1)\cdot (k-1)$ relabeling operations,
and at most $(n-1)\cdot \nicefrac{k(k-1)}{2}$ edge insertion  operations.
This leads to an overall running time of $\bigo(n\cdot k^2 \cdot  2^{r(r+k)})$.
\end{proof}

%
%

%

Up to now there are only very few digraph classes for which we can compute a directed clique-width expression in
polynomial time. This holds for directed co-graphs, digraphs of bounded directed modular width, 
orientations of trees, and directed cactus forests.
For such classes we can apply the result of Theorem \ref{xp-dca}.
In order to find directed clique-width expressions for general
digraphs one can use results on the related parameter bi-rank-width \cite{KR13}.
By \cite[Lemma 9.9.12]{BG18} we can use approximate directed clique-width
expressions obtained from rank-decomposition with the
drawback of a  single-exponential blow-up on the parameter.

If we restrict to some constant value of $r$ the running time of Theorem \ref{xp-dca}
leads to the following result, which reproves a result shown in \cite{GHKLOR14} using 
Theorem \ref{th-ghk}.

\begin{corollary}\label{cor11ax}
For  every integer $r$ the 
$r$-Oriented Chromatic Number problem  is in $\fpt$ when parameterized by directed clique-width.
\end{corollary}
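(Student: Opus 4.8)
The plan is to derive the corollary directly from Theorem \ref{xp-dca} by treating $r$ as a fixed constant rather than as part of the input. First I would invoke Theorem \ref{xp-dca}, which solves the Oriented Chromatic Number problem on a digraph given by a directed clique-width $k$-expression in time $\bigo(n\cdot k^2\cdot 2^{r(r+k)})$. Since in the $r$-Oriented Chromatic Number problem the threshold $r$ is no longer part of the input, the quantity $r$ may be regarded as a constant, and the entire task reduces to showing that this running time has the form $f(k)\cdot n^{\bigo(1)}$ demanded by the definition of an FPT-algorithm.

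The key step is a routine factorization of the exponential term. I would write $2^{r(r+k)}=2^{r^2}\cdot 2^{rk}=2^{r^2}\cdot (2^r)^k$. For fixed $r$ the factor $2^{r^2}$ is a constant, while $(2^r)^k$ depends only on $k$. Hence the running time becomes $\bigo(n\cdot k^2\cdot (2^r)^k)=f(k)\cdot n$ with $f(k)=k^2\cdot (2^r)^k$ a computable function of the parameter $k$ alone. This is exactly the shape $f(\dcws(G))\cdot |I|^{\bigo(1)}$ required for membership in $\fpt$, so for every fixed $r$ the problem is fixed-parameter tractable when parameterized by directed clique-width.

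The only point that requires care---and which I would address explicitly---is that Theorem \ref{xp-dca} presupposes that a directed clique-width $k$-expression is supplied, whereas the corollary is parameterized by the directed clique-width itself. For the digraph classes discussed above (directed co-graphs, digraphs of bounded directed modular width, orientations of trees, and directed cactus forests) such an expression is computable in polynomial time, so no blow-up occurs. For a general input digraph I would instead obtain an approximate expression from a rank-decomposition via the bi-rank-width results of \cite{KR13} and \cite[Lemma 9.9.12]{BG18}; this yields an expression of width $k'$ bounded by a function of $k$, at the cost of a single-exponential increase in the parameter. Feeding this $k'$-expression into the factorization above still gives a bound of the form $g(k)\cdot n$, preserving fixed-parameter tractability. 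Since no step beyond the elementary factorization is genuinely difficult, the main---and only---obstacle is this bookkeeping about where the expression comes from; once that is settled, the corollary is immediate.
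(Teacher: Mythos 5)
Your proposal is correct and follows essentially the same route as the paper: the corollary is obtained directly from Theorem \ref{xp-dca} by fixing $r$, so that the running time $\bigo(n\cdot k^2\cdot 2^{r(r+k)})$ becomes $f(k)\cdot n$ with $f(k)=k^2\cdot 2^{r^2}\cdot (2^r)^k$. Your closing remark about where the $k$-expression comes from matches the paper's own discussion preceding the corollary (polynomial-time constructible classes, and otherwise approximate expressions via bi-rank-width \cite{KR13} and \cite[Lemma 9.9.12]{BG18} with a single-exponential parameter blow-up), so no gap remains.
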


\begin{corollary}\label{ocnk2bb}
For every class of graphs of bounded  directed clique-width and every positive integer $r$ the $r$-Oriented Chromatic Number problem can be solved in polynomial time.
\end{corollary}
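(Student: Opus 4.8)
The plan is to derive this as an immediate consequence of Theorem \ref{xp-dca}, which solves the Oriented Chromatic Number problem in time $\bigo(n\cdot k^2 \cdot 2^{r(r+k)})$ once a directed clique-width $k$-expression for the input is available. First I would fix a class $\mathcal{L}$ of digraphs of bounded directed clique-width; by definition there is a constant $k$ such that $\dcws(G)\leq k$ for every $G\in\mathcal{L}$. For an input digraph $G\in\mathcal{L}$ on $n$ vertices together with the fixed integer $r$, the idea is to treat both $k$ and $r$ as constants when reading off the bound of Theorem \ref{xp-dca}: the factor $2^{r(r+k)}$ is then a constant independent of $n$, and the running time collapses to $\bigo(n)$, which is in particular polynomial.

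The step requiring care is obtaining a directed clique-width expression for $G$ to feed into Theorem \ref{xp-dca}, since computing an exact expression is not known to be possible in polynomial time for general digraphs. Here I would invoke the approximation route discussed after Theorem \ref{xp-dca}: using results on the related parameter bi-rank-width \cite{KR13} together with \cite[Lemma 9.9.12]{BG18}, one can compute in polynomial time an approximate directed clique-width expression on some number $k'$ of labels, where $k'$ is bounded by a function of $k$ (at the cost of a single-exponential blow-up). As $k$ is constant on $\mathcal{L}$, the value $k'$ is likewise a constant depending only on $\mathcal{L}$.

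Finally I would apply Theorem \ref{xp-dca} to this $k'$-expression. Since $k'$ and $r$ are both constants, the factor $(k')^2\cdot 2^{r(r+k')}$ is a constant, so the overall running time is $\bigo(n)$ and hence polynomial in the size of $G$, as claimed. The only genuine obstacle is guaranteeing the availability of a (possibly approximate) expression; once this is in hand, the result is just the specialization of Theorem \ref{xp-dca} to constant parameters $k'$ and $r$.
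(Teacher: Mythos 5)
Your proposal is correct and follows essentially the same route as the paper, which obtains this corollary by specializing the $\bigo(n\cdot k^2\cdot 2^{r(r+k)})$ bound of Theorem \ref{xp-dca} to constant $k$ and $r$ (via Corollary \ref{cor11ax}). Your explicit treatment of how to obtain the expression -- the bi-rank-width approximation of \cite{KR13} with the single-exponential blow-up of \cite[Lemma 9.9.12]{BG18}, harmless since the parameter is constant on the class -- is exactly the remedy the paper itself sketches in the remark following Theorem \ref{xp-dca}, so you have merely made explicit a detail the paper leaves implicit.
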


By Propositions \ref{prop-cw-oc} and \ref{prop-cw-msp} we know that on oriented co-graphs and 
msp-digraphs  for every positive integer $r$ the $r$-Oriented Chromatic Number problem can be solved in polynomial time. Comparing this to Corollary \ref{c-co} and Theorem \ref{algspdd} we have shown more
general solutions before.

If we restrict to some constant value of $k$ the running time of Theorem \ref{xp-dca}
leads to the following result.

\begin{corollary}\label{cor11ay}
For every class of graphs of bounded directed clique-width the 
Oriented Chromatic Number problem  is in $\fpt$ when parameterized by $r$.
\end{corollary}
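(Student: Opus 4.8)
The plan is to derive the statement directly from Theorem \ref{xp-dca}, exploiting that for a class $\mathcal{L}$ of bounded directed clique-width the width parameter $k$ is a fixed constant rather than part of the input. First I would fix, for the given class $\mathcal{L}$, an integer $k$ such that every digraph $G\in\mathcal{L}$ satisfies $\dcws(G)\le k$. The instance of the problem is then a digraph $G\in\mathcal{L}$ together with the integer $r$, and the parameter is $r$ alone.

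The first step is to make a directed clique-width expression for $G$ available, since Theorem \ref{xp-dca} takes such an expression as input. Here I would invoke the discussion preceding the corollary: using the connection to bi-rank-width \cite{KR13} and the approximation of directed clique-width expressions from rank-decompositions via \cite[Lemma 9.9.12]{BG18}, one obtains, for a digraph of directed clique-width at most $k$, a directed clique-width $k'$-expression where $k'$ is bounded by a single-exponential function of $k$. Since $k$ is a constant for the class $\mathcal{L}$, the resulting bound $k'$ is again a constant, and the expression can be computed in time polynomial in $n$, where the exponent and constants depend only on $k$ and not on $r$.

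Having such a $k'$-expression, I would apply Theorem \ref{xp-dca}, which solves the Oriented Chromatic Number problem in time $\bigo(n\cdot (k')^2\cdot 2^{r(r+k')})$. Because $k'$ is a constant depending only on $\mathcal{L}$, the factor $(k')^2$ is constant and the factor $2^{r(r+k')}=2^{r^2+rk'}$ is a function of $r$ alone. Setting $f(r)=(k')^2\cdot 2^{r(r+k')}$, the total running time, including the polynomial-time preprocessing that produces the expression, is of the form $f(r)\cdot n^{\bigo(1)}$, which is exactly the shape required for membership in $\fpt$ when parameterized by $r$.

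The main obstacle is the expression-computation step rather than the coloring algorithm itself: for arbitrary digraphs no efficient algorithm computing an (even approximate) directed clique-width expression is known, so the argument genuinely relies on the rank-width-based approximation to supply a constant-width expression in polynomial time. Once $k'$ is fixed as a constant, the remaining analysis reduces to the bookkeeping observation that in $\bigo(n\cdot (k')^2\cdot 2^{r(r+k')})$ only the term $2^{r(r+k')}$ is super-polynomial, and it depends solely on the parameter $r$.
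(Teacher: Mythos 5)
Your proposal is correct and follows essentially the same route as the paper, which obtains Corollary \ref{cor11ay} simply by fixing $k$ to a constant in the running time $\bigo(n\cdot k^2 \cdot 2^{r(r+k)})$ of Theorem \ref{xp-dca}, so that the whole bound becomes $f(r)\cdot n$. Your additional care about where the directed clique-width expression comes from (via bi-rank-width and the single-exponential approximation of \cite[Lemma 9.9.12]{BG18}) is a sensible explicit treatment of a point the paper handles only in the discussion preceding the corollary, where the expression is otherwise assumed to be part of the input.
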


%

\subsection{Parameterization by  directed modular-width}

Next, we discuss parameterization of $\OCN$ and $\OCN_r$ w.r.t.
the parameter directed modular-width ($\dmws$). Directed modular-width
was introduced and applied to the Acyclic Chromatic Number
problem in \cite{SW19,SW20}.

A class of graphs $\mathcal{L}$ has {\em bounded  directed modular-width} 
if there is some integer $k$ such that every graph in $\mathcal {L}$
has  directed modular-width at most $k$.


\begin{proposition}[\cite{SW20}]\label{prop-co-mw}
Every (directed and thus every) oriented co-graph has directed modular width $2$.
\end{proposition}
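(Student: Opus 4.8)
The plan is to argue by structural induction on the di-co-expression defining an oriented co-graph, showing that each of the two generating operations of Definition \ref{dcog} is nothing but a substitution into a quotient digraph on exactly two vertices. First I would pin down the substitution-based view of directed modular-width: a digraph has directed modular-width at most $k$ if it can be built from single vertices by repeatedly substituting already-constructed digraphs $G_1,\ldots,G_\ell$ into the vertices $1,\ldots,\ell$ of a quotient digraph $H$ on at most $k$ vertices, where the substitution takes the disjoint union of the $G_i$ and inserts all arcs from $G_i$ to $G_j$ precisely when $(i,j)$ is an arc of $H$. Under this description it suffices to exhibit, for each operation, a two-vertex quotient that realizes it, together with the fact that the two parts are modules.

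The base case is immediate, since a single-vertex digraph trivially has directed modular-width at most $2$. For the inductive step I would take oriented co-graphs $G_1$ and $G_2$, each of directed modular-width at most $2$ by the induction hypothesis. For the disjoint union $G_1 \oplus G_2$, both $V(G_1)$ and $V(G_2)$ are modules, since no vertex of one part has an arc to or from the other, and $G_1 \oplus G_2$ is exactly the substitution of $G_1,G_2$ into the edgeless digraph on two vertices. For the order composition $G_1 \oslash G_2$, the parts $V(G_1)$ and $V(G_2)$ are again modules, because every vertex of $G_1$ sends an arc to every vertex of $G_2$ and there is no arc in the reverse direction, so $G_1 \oslash G_2$ is the substitution of $G_1,G_2$ into the two-vertex digraph with the single arc $(1,2)$. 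In both cases the quotient has exactly two vertices, so $G$ has directed modular-width at most $2$, closing the induction.

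I would also record that the same argument yields the stronger statement flagged by the parenthetical ``directed and thus every'': adjoining the series composition, which inserts all arcs in both directions between the two parts, corresponds to the remaining two-vertex quotient, namely the digraph carrying both arcs $(1,2)$ and $(2,1)$. Hence every directed co-graph, and a fortiori every oriented co-graph as a subclass, is generated using only two-vertex quotients and therefore has directed modular-width at most $2$. Since any oriented co-graph containing an arc already forces the value to be at least $2$ under the usual convention, the bound is tight and the modular-width equals $2$.

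The step I expect to need the most care is purely definitional bookkeeping: matching the convention of \cite{SW20} for directed modular-width, in particular whether the disjoint union and the two compositions are counted as ``free'' base operations or themselves as size-two substitutions, and exactly how the arc directions of the quotient are read off. Once that convention is fixed, verifying the module property and identifying each operation with its two-vertex quotient is entirely routine, and there is no genuinely hard combinatorial obstacle.
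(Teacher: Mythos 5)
The paper offers no proof of this proposition at all: it is imported verbatim from \cite{SW20}, so there is no in-paper argument to compare yours against. Your induction is the correct and standard way to establish the statement, and it is sound: under the expression-based (substitution) definition of directed modular-width, the base case is trivial, $G_1 \oplus G_2$ is substitution into the two-vertex edgeless quotient, $G_1 \oslash G_2$ is substitution into the two-vertex quotient with the single arc $(1,2)$, and the series composition of directed co-graphs is substitution into the digon $\overleftrightarrow{P_2}$ --- which is a legitimate quotient here since quotients need not be oriented. The module verifications you give are exactly right.

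The one caveat you flag is indeed the only real point of care, and it is worth being explicit about why: if directed modular-width were read off the \emph{canonical} modular decomposition tree (width = maximum number of children over all nodes), the proposition would be false, since the parallel node of an edgeless digraph on $n$ vertices has $n$ children. The definition in \cite{SW19,SW20} is the expression-based one, under which associativity of $\oplus$, $\oslash$, and the series composition lets you binarize every degenerate node, which is precisely what your two-vertex quotients do; this is also the reading consistent with the paper's Lemma~\ref{le-mw-cw} ($\dcws(G)\leq\dmws(G)$). Two trivia: tightness already follows for any co-graph on at least two vertices (a size-$1$ quotient substitution is a no-op, so nothing with two or more vertices has width $\leq 1$), not only for those containing an arc; and strictly speaking the proposition, like your proof, should be read as ``width at most $2$'' with the single-vertex graph as the obvious degenerate exception to equality.
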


The relation of directed clique-width and directed modular width is as follows.

\begin{lemma}[\cite{SW20}]\label{le-mw-cw} For every digraph $G$ it holds that
$\dcws(G)\leq \dmws(G)$.
\end{lemma}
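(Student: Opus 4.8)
The plan is to prove the inequality by induction on the modular decomposition of $G$, turning a directed modular-width decomposition directly into a directed clique-width expression. Recall that $\dmws(G)\leq k$ means that $G$ can be built from single vertices by repeated substitution: $G=H(G_1,\ldots,G_t)$, where $H$ is a digraph on $t\leq k$ vertices $h_1,\ldots,h_t$, each $G_i$ is obtained recursively (and has directed modular-width at most $k$), and the substitution replaces each $h_i$ by a disjoint copy of $G_i$ while adding, for every arc $(h_i,h_j)$ of $H$, all arcs from $V(G_i)$ to $V(G_j)$. The base case is immediate: if $G$ is a single vertex, then $a(v)$ is a directed clique-width expression using one label, so $\dcws(G)=1\leq\dmws(G)$.

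For the inductive step, write $G=H(G_1,\ldots,G_t)$ with $t\leq k$. By the induction hypothesis each $G_i$ admits a directed clique-width $k$-expression $X_i$. First I would transform each $X_i$ into an expression $X_i'$ for the same digraph in which every vertex carries the single label $i\in\{1,\ldots,t\}$, by appending to $X_i$ the relabeling operations $\rho_{a\to i}$ for all $a\neq i$; since relabeling does not alter arcs, $\g(X_i')=G_i$ with all vertices labeled $i$. Next I would form the (nested binary) disjoint union $X'=X_1'\oplus\cdots\oplus X_t'$, which realizes the disjoint union of the $G_i$ with block $G_i$ monochromatically labeled $i$. Finally, for each arc $(h_i,h_j)$ of $H$ (so $i\neq j$, as $H$ has no loops) I would apply the edge-insertion operation $\alpha_{i,j}$; because these operations do not change labels, they compose in any order, and $\alpha_{i,j}$ inserts exactly the arcs from all label-$i$ vertices to all label-$j$ vertices, i.e.\ precisely the arcs from $V(G_i)$ to $V(G_j)$ demanded by the substitution. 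The resulting expression $X$ satisfies $\g(X)=G$.

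It remains to bound the number of labels. Each $X_i'$ uses at most $k$ labels internally (from the induction hypothesis), while the top-level union and edge insertions only reference the block labels $1,\ldots,t$, and $t\leq k$; hence $X$ uses at most $k$ labels, giving $\dcws(G)\leq k=\dmws(G)$. The one point requiring care, and the \emph{main obstacle}, is the potential clash between the $k$ internal labels used to build some $G_i$ and the module labels $1,\ldots,t$ sitting on already-constructed blocks. This is resolved by the tree structure of clique-width expressions: each $X_i'$ is a self-contained subexpression whose current digraph is only $G_i$, so its internal use of labels $1,\ldots,k$ never interferes with the other blocks, and each $G_i$ is collapsed to the single label $i$ before the $\oplus$ that combines it with the rest. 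Thus no more than $k$ labels are ever simultaneously meaningful, which completes the induction and the proof.
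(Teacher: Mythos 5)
Your proposal is correct; note that the paper does not prove Lemma \ref{le-mw-cw} itself but only cites \cite{SW20}, and your argument---inductively turning a modular decomposition $G=H(G_1,\ldots,G_t)$ into a directed clique-width expression by collapsing each block $G_i$ to the single label $i$ via relabelings, taking the disjoint union, and applying $\alpha_{i,j}$ for every arc $(h_i,h_j)$ of $H$---is exactly the standard construction used there. You also correctly identify and resolve the only delicate point, namely that the internal labels of each subexpression $X_i'$ cannot clash with the block labels because every block is made monochromatic before any $\oplus$, so nothing is missing.
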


On the other hand, there exist several classes of digraphs of bounded directed clique-width and
unbounded directed modular width, e.g. the set of all oriented
paths $\{\overrightarrow{P_n} \mid n\geq 1\}$,
the set of all oriented cycles $\{\overrightarrow{C_n} \mid n\geq 1\}$, and the set of all msp-digraphs.
An advantage of directed modular width is that it can be computed efficiently  \cite{SW19,SW20}.

Lemma \ref{le-mw-cw} and Corollary \ref{cor11ax} lead to the following result.

\begin{corollary}\label{ocnk2}
For every positive integer $r$ the $r$-Oriented Chromatic Number problem
is fixed parameter tractable
w.r.t.\ the parameter directed modular-width.
\end{corollary}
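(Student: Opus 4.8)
The plan is to combine the parameter inequality of Lemma \ref{le-mw-cw} with the fixed-parameter tractability result of Corollary \ref{cor11ax}. The guiding idea is that bounding directed modular-width automatically bounds directed clique-width, so an algorithm that is efficient for the latter is also efficient for the former---provided we can actually produce the decomposition the algorithm needs.

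First I would fix a digraph $G$ with $\dmws(G)=k$ and recall from Lemma \ref{le-mw-cw} that $\dcws(G)\leq \dmws(G)=k$. The algorithm underlying Corollary \ref{cor11ax}, namely the one of Theorem \ref{xp-dca}, does not take a digraph alone but a directed clique-width $k$-expression as input. Hence the first genuine step is to obtain such an expression of width at most $k$ in polynomial time. Here I would use that directed modular-width, together with its modular decomposition, can be computed efficiently \cite{SW19,SW20}, and that the proof of Lemma \ref{le-mw-cw} is constructive: it turns the modular decomposition of $G$ into a directed clique-width expression using at most $\dmws(G)$ labels. Evaluating this construction along the decomposition tree yields a directed clique-width $k$-expression $X$ with $\g(X)=G$ in polynomial time.

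With $X$ in hand, I would invoke Theorem \ref{xp-dca}, which solves the Oriented Chromatic Number problem in time $\bigo(n\cdot k^2\cdot 2^{r(r+k)})$. Since this bound is monotone increasing in $k$ and $k\leq\dmws(G)$, it is at most $\bigo(n\cdot \dmws(G)^2\cdot 2^{r(r+\dmws(G))})$; with $r$ fixed this is of the form $f(\dmws(G))\cdot n^{\bigo(1)}$. Adding the polynomial cost of computing the expression keeps the total running time of FPT form with respect to $\dmws$, which establishes the claim.

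The step I expect to be the real obstacle is not the parameter comparison itself---which is immediate from Lemma \ref{le-mw-cw}---but ensuring the constructive availability of the directed clique-width expression. For directed clique-width in general, computing an optimal or even approximate expression is hard, so the transfer of an FPT result across the inequality $\dcws(G)\leq\dmws(G)$ is only legitimate because bounded directed modular-width is precisely one of the few settings in which the required expression can be computed in polynomial time. Making this constructive link explicit is the heart of the argument.
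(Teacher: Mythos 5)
Your proposal is correct and follows essentially the same route as the paper, which derives the corollary directly by combining Lemma \ref{le-mw-cw} with Corollary \ref{cor11ax}. Your additional care about constructively obtaining the directed clique-width $k$-expression from the modular decomposition (via \cite{SW19,SW20}) is a point the paper leaves implicit but acknowledges elsewhere, where it lists digraphs of bounded directed modular-width among the few classes for which such expressions are computable in polynomial time.
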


\begin{corollary}\label{ocnk2a}
For every class of graphs of bounded  directed modular-width and every positive integer $r$ the $r$-Oriented Chromatic Number problem can be solved in polynomial time.
\end{corollary}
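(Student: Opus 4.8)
The plan is to derive this directly from the fixed-parameter tractability already established in Corollary \ref{ocnk2}. First I would fix a class $\mathcal{L}$ of bounded directed modular-width, so that by definition there is some integer $k$ with $\dmws(G)\leq k$ for every $G\in\mathcal{L}$. By Corollary \ref{ocnk2} the $r$-Oriented Chromatic Number problem is solvable by an FPT-algorithm with respect to the parameter directed modular-width, that is, in time $f(\dmws(G))\cdot |G|^{\bigo(1)}$ for some computable function $f$.

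Next I would observe that on the class $\mathcal{L}$ the parameter $\dmws(G)$ only takes values in the finite set $\{0,1,\ldots,k\}$, so the factor $f(\dmws(G))$ is bounded by the constant $\max_{0\leq j\leq k} f(j)$. Absorbing this constant into the $\bigo$-notation turns the running time into $|G|^{\bigo(1)}$, which is polynomial in the size of the input. This is exactly the pattern already used to pass from Corollary \ref{cor11ax} to Corollary \ref{ocnk2bb} for directed clique-width.

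Alternatively one can argue through directed clique-width: by Lemma \ref{le-mw-cw} a bound $\dmws(G)\leq k$ yields $\dcws(G)\leq k$, so $\mathcal{L}$ also has bounded directed clique-width, and Corollary \ref{ocnk2bb} applies. The one point that needs care in this route --- and the only genuine subtlety of the whole argument --- is that applying Theorem \ref{xp-dca} requires an explicit directed clique-width $k$-expression as input; this is supplied because directed modular-width can be computed efficiently and the proof of Lemma \ref{le-mw-cw} constructs such an expression from a directed modular decomposition. I therefore expect the step of obtaining the decomposition to be the only place where anything beyond a one-line constant-absorption argument is needed, and it is already handled by the efficient computability of $\dmws$ noted before the statement.
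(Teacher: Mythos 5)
Your proposal is correct and matches the paper's (implicit) argument: the paper states this corollary without a separate proof, as the immediate consequence of Corollary~\ref{ocnk2} obtained by absorbing the bounded parameter value into the constant, exactly as in your first route and in the analogous passage from Corollary~\ref{cor11ax} to Corollary~\ref{ocnk2bb}. Your additional care about where the decomposition comes from (efficient computability of directed modular-width per \cite{SW19,SW20} and the constructive nature of Lemma~\ref{le-mw-cw}) is sound and only strengthens the argument.
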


By Proposition \ref{prop-co-mw} we conclude that on oriented co-graphs 
for every positive integer $r$ the $r$-Oriented Chromatic Number problem can be solved in polynomial time. Comparing this to Corollary \ref{c-co}  we have shown a more
general solution before.


\subsection{Parameterization by the width of the underlying undirected graph}

In \cite{Gan09}, Ganian has shown  an $\fpt$-algorithm for $\OCN$ w.r.t.\ the parameter
tree-width (of the underlying undirected graph).
Further, he has shown that $\OCN$ is DET-hard for classes of
oriented graphs, such that the underlying undirected class has bounded rank-width.

\subsection{Parameterization by  number of vertices}

A positive result can be obtained for parameter ''number of vertices'' $n$.
Since integer linear programming is fixed-parameter tractable for the
parameter ''number of variables''  \cite{Len83}
the following
binary integer program for $\OCN$ using $n^2+n$ variables
implies an $\fpt$-algorithm for parameter $n$.

\begin{remark}\label{rem-bip}
To formulate   the  Oriented Chromatic Number problem for some oriented graph $G=(V,E)$ as a binary integer program,
we introduce a binary variable $y_j\in\{0,1\}$, $j\in \{1,\ldots, n\}$, such that
$y_j=1$ if and only if color $j$ is used. Further we use $n^2$ variables
$x_{i,j}\in\{0,1\}$, $i,j\in \{1,\ldots,n\}$, such that
$x_{i,j}=1$ if and only if vertex $v_i$ receives color  $j$. The main
idea is to ensure the two conditions of Definition \ref{def-oc}
within (\ref{01-p3-cn0x}) and (\ref{01-px}).

\begin{eqnarray}
\text{minimize}   \sum_{i=1}^{n}   y_i  \label{01-p1-cn0a}
\end{eqnarray}
subject to
\begin{eqnarray}
\sum_{j=1}^{n} x_{i,j}  & =  &1 \text{ for every }    i \in  \{1,\ldots,n\} \label{01-p2-cn0a} \\
x_{i,j} + x_{i',j}& \leq   &y_j \text{ for every }   (v_i,v_{i'})\in E, j \in \{1,\ldots,n\}  \label{01-p3-cn0x} \\
   \bigvee_{j=1}^n x_{i,j} \wedge x_{i''',j}             & \leq & 1- \bigvee_{j=1}^n x_{i',j} \wedge x_{i'',j}                            \text{ for every }   (v_i,v_{i'}),(v_{i''},v_{i'''})\in E \label{01-px}   \\
y_j   &\in &  \{0,1\} \text{ for every } j \in   \{1,\ldots,n\} \label{01-p4-cna0a} \\
x_{i.j}   &\in &  \{0,1\} \text{ for every } i,j \in  \{1,\ldots,n\}  \label{01-p5-cn0a}
\end{eqnarray}

Equations in (\ref{01-px})  are not in propositional logic. In order to reformulate
them for binary integer programming, one can use the results of \cite{Gur14}.
\end{remark}

\begin{corollary}\label{xp-n}
The  Oriented Chromatic Number problem  is in $\fpt$ when parameterized by 
the number of vertices $n$.
\end{corollary}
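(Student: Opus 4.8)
The plan is to exploit the binary integer programming formulation of $\OCN$ given in Remark~\ref{rem-bip} together with Lenstra's theorem on the fixed-parameter tractability of integer linear programming when parameterized by the number of variables \cite{Len83}. First I would verify that the program in Remark~\ref{rem-bip} is a faithful encoding of $\OCN$: constraint (\ref{01-p2-cn0a}) forces every vertex to receive exactly one color, constraint (\ref{01-p3-cn0x}) simultaneously forbids monochromatic arcs and records through the variables $y_j$ which colors are actually used, and constraint (\ref{01-px}) captures the second defining condition of an oriented coloring from Definition~\ref{def-oc}, namely that no two color classes are joined by arcs in both directions. The objective (\ref{01-p1-cn0a}) then minimizes the number of used colors, whose optimum equals $\chi_o(G)$.

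Next I would address that the constraints in (\ref{01-px}) are stated in propositional rather than linear form, and must be rewritten as genuine linear inequalities over $\{0,1\}$-variables before Lenstra's algorithm applies. Following the reformulation techniques of \cite{Gur14}, each such disjunctive/conjunctive constraint can be replaced by a bounded collection of linear inequalities, possibly after introducing auxiliary binary variables indexed by pairs of arcs. The crucial bookkeeping point is that the number of arcs is at most $n(n-1)$, so even a pairwise expansion introduces only polynomially many new variables and constraints in $n$. Hence the total number of variables of the resulting integer linear program remains bounded by a function (indeed a polynomial) of $n$ alone.

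Finally I would invoke Lenstra's theorem \cite{Len83}, which guarantees that integer linear programming is fixed-parameter tractable with respect to the number of variables. Since the variable count of our program is bounded purely in terms of $n$, applying Lenstra's algorithm yields a running time of the form $f(n)\cdot |I|^{\bigo(1)}$, giving the desired $\fpt$-algorithm for $\OCN$ parameterized by $n$. The main technical obstacle is the middle step: turning the logical constraints (\ref{01-px}) into linear ones without letting the number of variables escape the control of the parameter $n$. Once this linearization is handled via \cite{Gur14}, the conclusion follows immediately from Lenstra's theorem.
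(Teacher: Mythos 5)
Your proposal is correct and follows essentially the same route as the paper: the paper derives this corollary directly from the binary integer program of Remark~\ref{rem-bip} with its $n^2+n$ variables, appealing to Lenstra's theorem \cite{Len83} for fixed-parameter tractability in the number of variables and to \cite{Gur14} for linearizing the logical constraints (\ref{01-px}). Your additional care in checking that the linearization keeps the variable count polynomially bounded in $n$ is exactly the point the paper leaves implicit, so nothing further is needed.
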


\section{Conclusions and outlook}

In this paper we considered oriented colorings of recursively defined digraphs.
We used the concept of perfect orderable graphs in order to show that for
acyclic transitive digraphs every greedy coloring along
a topological ordering leads to an optimal oriented coloring,
which generalizes a known dynamic programming solution for the Oriented
Chromatic Number problem on oriented co-graphs in \cite{GKR19d}.

Further, we showed that every msp-digraph
has oriented chromatic number at most $7$, which is
best possible. We applied this bound together with the recursive  structure of
msp-digraphs to give a linear time solution for
computing the oriented chromatic number of msp-digraphs.

In Figure \ref{grcl} we summarize the relation of special graph classes
considered in this work. Among these are directed acyclic graphs (DAGs),
transitive directed acyclic graphs (transitive DAGs), series-parallel digraphs ($\SPD$),
minimal series-parallel digraphs ($\MSP$), series-parallel order graphs ($\SPO$), and
oriented co-graphs ($\OC$).
The directed edges represent the existing relations between
the graph classes, which follow by their definitions.
For the relations to further graph classes we refer to  \cite[Figure 11.1]{BG18}.

\begin{figure}[ht]
\begin{center}
\centerline{\includegraphics[width=0.37\textwidth]{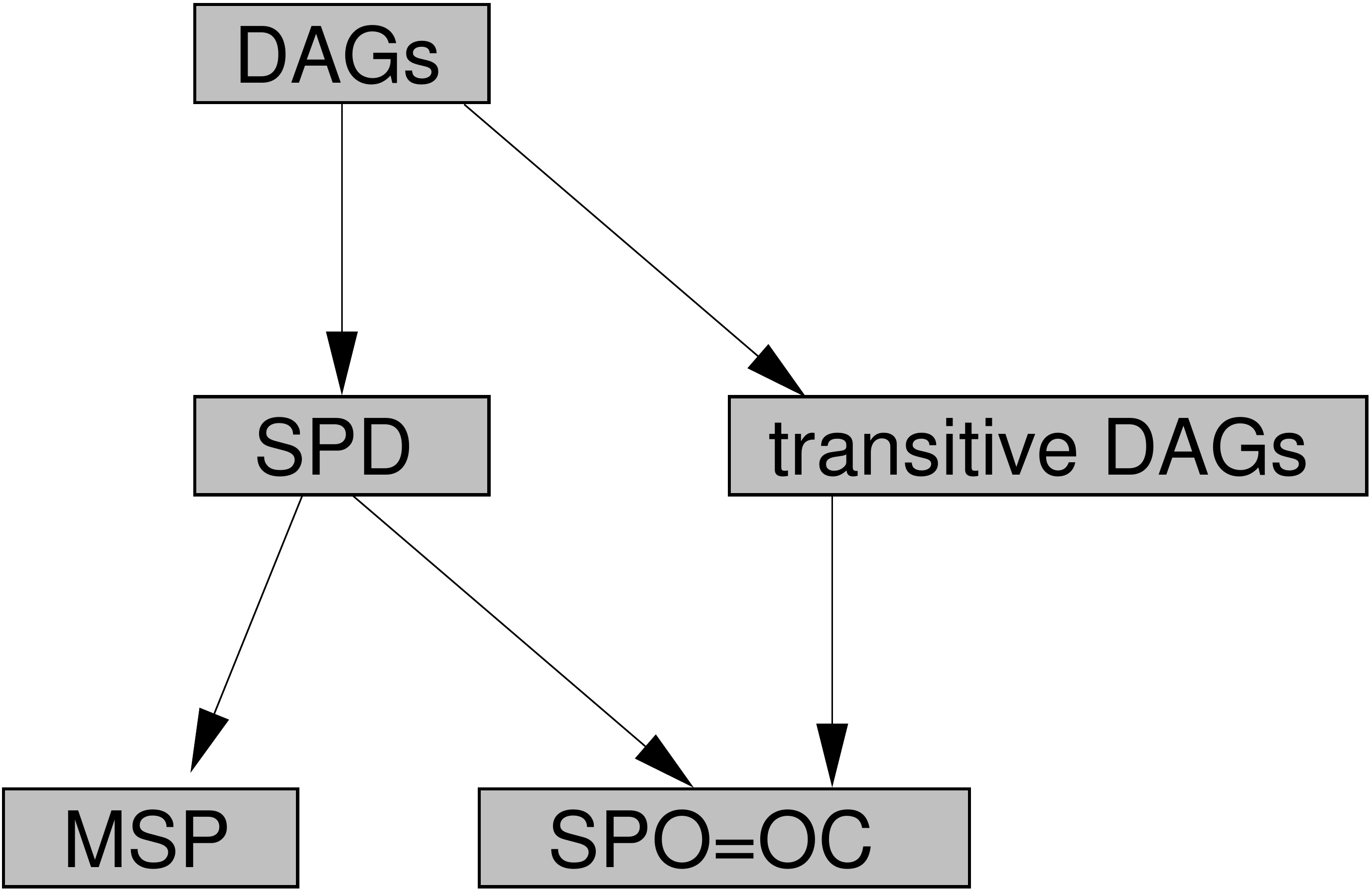}}
\caption{The figure shows the inclusions of special graph classes. A directed edge
from class $A$ to class $B$ indicates that $B\subseteq A$. Two classes $A$ and $B$
are incomparable if there is neither a directed path from $A$ to $B$, nor
a directed path from $B$ to $A$.}\label{grcl}
\end{center}
\end{figure}

Furthermore we considered the parameterized complexity of the Oriented Chromatic Number problem
by structural parameters, which
are measuring the difficulty of decomposing a graph into a special tree-structure.
The parameterized results of Corollary \ref{o} and Corollary \ref{cor11ax}
also hold for any parameter which is
larger or equal than directed clique-width
directed linear clique-width \cite{GR19c}. Furthermore, restricted to
semicomplete digraphs the shown parameterized solutions also hold
for directed path-width \cite[Lemma 2.14]{FP19}.

For future work it could be interesting to extend our solutions to further classes
such as edge series-parallel digraphs \cite{VTL82}. 
The parameterized complexity of $\OCN$ w.r.t.\ the parameter
directed clique-width remains open.
Since the directed clique-width of a digraph is always greater or equal the undirected
clique-width of the corresponding underlying undirected graph \cite{GWY16}, the result
of Ganian \cite{Gan09} does not imply a hardness result.
Furthermore, the parameterized complexity of
$\OCN$  w.r.t.\ structural parameter directed modular-width  
and of $\OCN_r$  w.r.t.\ parameter rank-width of $\un(G)$
remains open (cf.\ Table \ref{fpt-sum}).

\section*{Acknowledgements} \label{sec-a}

The work of the second and third author was supported
by the Deutsche
Forschungsgemeinschaft (DFG, German Research Foundation) -- 388221852




\newcommand{\etalchar}[1]{$^{#1}$}

\end{document}